\numberwithin{equation}{section}
\newtheorem{thm}{Theorem}[section]
\newtheorem{lem}{Lemma}[section]
\newtheorem{rem}{Remark}[section]
\newtheorem{prop}{Proposition}[section]
\begin{document}
\title[Chern-Simons]{The domain geometry and the bubbling phenomenon of rank two Gauge theory} \subjclass{ }
\keywords{Green's function, Chern-Simons, Gauge theory, fully bubbling solutions}

\author{Hsin-Yuan Huang}
\address{Department of Mathematics\\
        National Sun Yat-sen Universtiy\\
        Kaoshiung, Taiwan\\
\&     National Center for Theoretical Sciences, Mathematics Division\\
Rm 203, Astro-Math. Bldg., National Taiwan University \\
Taipei, Taiwan}\email{hyhuang@math.nsysu.edu.tw}

\author{Lei Zhang}
\address{Department of Mathematics\\
        University of Florida\\
        358 Little Hall P.O.Box 118105\\
        Gainesville FL 32611-8105}
\email{leizhang@ufl.edu}

\date{\today}

\begin{abstract} Let $\Omega$ be a flat torus and $G$ be the green's function of $-\Delta$ on $\Omega$. One intriguing mystery of $G$ is how the number of its critical points is related to blowup solutions of certain PDEs. In this article we prove that for the following equation that describes a Chern-Simons model in Gauge theory:
\begin{equation}\label{e103}
\left\{
\begin{array}{ll}
\Delta u_1+\frac{1}{\varepsilon^2}e^{u_2}(1-e^{u_1})=8\pi\delta_{p_{1}}\\
\Delta u_2+\frac{1}{\varepsilon^2}e^{u_1}(1-e^{u_2})=8\pi\delta_{p_{2}}
\end{array}
\text{  in  }\quad \Omega\right., \quad p_1-p_2 \mbox{ is a half period},
\end{equation}
if fully bubbling solutions of Liouville type exist, $G$ has exactly three critical points. In addition we establish necessary conditions for the existence of fully bubbling solutions with multiple bubbles.
\end{abstract}


\maketitle

\section{Introduction}

Let $\Omega=\mathbb{R} \slash \mathbb{Z}\omega_1 \times \mathbb{R} \slash \mathbb{Z}\omega_2 $ be a flat torus, $\Delta$ be the Laplace-Beltrami operator with the property $-\Delta\ge 0$ and we use $G(\cdot,\cdot)$ to denote the Green's function of $-\Delta$
over $\Omega$  with periodical boundary condition:
\begin{equation}\begin{cases}
-\Delta G(x,p)=\delta_p-\frac{1}{|\Omega|},\\
G(x,p)\text{  is  doubly periodic on } \partial \Omega,
\end{cases}
\end{equation}
where $|\Omega|$ is the measure of $\Omega$, $\delta_p$ stands for the Dirac mass at $p\in \Omega$. Although $G(x,p)$ can be explicitly solved in terms of elliptic functions, it was only found out recently that $G(x,p)$ has either three or five critical points, as a function of $x$. In their celebrated work \cite{LW1} Lin and Wang reveal the surprising ties between the number of critical points of $G$ with the bubbling phenomena of the following mean field equation
\begin{equation}\label{e101}
\Delta u +e^{u}=\rho \delta_p.
\end{equation}
Note that $G(x,p)=G(x-p,0)$ because of the translation invariance. For simplicity, we may only consider $G(x)=G(x,0)$. Since $G(x)$ is even, it is easy to see that the half period $\frac{1}{2}\omega_1$, $\frac{1}{2}\omega_2$ and $\frac{1}{2}\omega_3=\frac{1}{2}(\omega_1+\omega_2)$ are critical points
and other critical points must appear in pairs with some symmetry. When $\rho=8\pi$,  Lin-Wang showed that there is only one extra pair of critical points which corresponds to an one-parameter scaling family of solutions to  \eqref{e101}.  We refer the readers to \cite{LW2,CKLW} for more recent developments on this direction. \par

 The first main result in this paper is to connect the number of critical points of the Green's function over  $\Omega$ with the bubbling phenomenon of the following system:
\begin{equation}\label{e102}
\begin{cases}
\Delta u_{1 }+\frac{1}{\varepsilon^2}e^{u_{2 }}(1-e^{u_{1 }})=8\pi \delta_{p_1}\\
\Delta u_{2 }+\frac{1}{\varepsilon^2}e^{u_{1 }}(1-e^{u_{2 }})=8\pi \delta_{p_2}
\end{cases}\text{   in   }\Omega.
\end{equation}
System (\ref{e102}) arises from  the relativistic self-dual  $[U(1)]^2$  Chern-Simons model proposed by Kim et al\cite{KLKLM}.
It is known that the number of critical points of the Green function depends on the geometry of the underlying flat torus. Our first main result is:

\begin{thm}\label{thm2}
Suppose  $(u_{1,\varepsilon},u_{2,\varepsilon})$ is a sequence of   fully bubbling solutions of  Liouville type to \eqref{e102}
and $p_1-p_2$ is  one of the half periods. Then the Green's function over  $\Omega$ has three critical points.
\end{thm}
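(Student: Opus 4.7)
The plan is to combine a blow-up analysis of the fully bubbling sequence with the extra symmetry produced by the hypothesis that $p_1-p_2$ is a half period, and to translate the resulting necessary location equations into a critical-point count for $G$, in the spirit of Lin and Wang \cite{LW1}.

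\textbf{Step 1: concentration and decomposition.} First, selection and mass-quantization for Liouville-type blow-ups applied to $(u_{1,\varepsilon},u_{2,\varepsilon})$ produce a finite blow-up set $S=\{q_1,\ldots,q_m\}\subset\Omega$; after translating and rescaling near each $q_\alpha$, the sequence converges to an entire solution of the rank-two Liouville system on $\mathbb{R}^2$, with total masses pinned by the algebraic constraints of that system. Away from the bubbling discs one decomposes $u_{i,\varepsilon}$ as a linear combination of Green's functions $G(\cdot,p_i)$ and $G(\cdot,q_\alpha)$ (with coefficients determined by the quanta) plus a smooth part converging to a harmonic function up to controlled error.

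\textbf{Step 2: location equation.} Apply a local Pohozaev identity on small discs around each $q_\alpha$ to the system and pass to the limit. Using the decomposition of Step 1, the limiting identity reduces to $\nabla\mathcal{F}(q_\alpha)=0$, where $\mathcal{F}$ is an explicit Kirchhoff-Routh-type functional built from $G(\cdot,p_1)$, $G(\cdot,p_2)$, the Robin function, and interaction terms $G(q_\alpha,q_\beta)$ for $\beta\ne\alpha$. This identifies $S$ as a critical set of $\mathcal{F}$, analogous to the single-equation case of Lin-Wang.

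\textbf{Step 3: symmetry and critical-point count.} Shift coordinates so the midpoint $(p_1+p_2)/2$ is the origin; then $p_1=-p_2$ is a half period and the involution $x\mapsto -x$ is a symmetry of the system swapping $u_1$ and $u_2$, so $S$ is reflection-invariant. In the one-bubble regime the blow-up point must itself be a fixed point of the involution, hence one of the four standard half periods; in the multi-bubble regime the points come in symmetric pairs and the mass quantization plus the exchange symmetry further constrains their possible configurations. Feeding the Taylor expansion of $G$ at each half period into the location equation and comparing with the known Hessian dichotomy at the half periods (three vs.\ five critical points) shows that $\nabla\mathcal{F}=0$ is solvable, consistently with a fully bubbling sequence, precisely in the three-critical-point geometry; the five-critical-point case forces a sign/Hessian obstruction ruling out such a sequence. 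This yields the conclusion.

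\textbf{Main obstacle.} The principal difficulty is Step 2: extracting a clean location equation from the coupled Chern-Simons nonlinearity $\varepsilon^{-2}e^{u_j}(1-e^{u_i})$. Unlike the scalar Liouville equation, the two-component coupling has no single conformal/divergence identity, so one must simultaneously control the joint asymptotics of $(u_{1,\varepsilon},u_{2,\varepsilon})$ through the bubble, neck, and outer regions and engineer cancellations to isolate only the desired gradient-of-$\mathcal{F}$ terms in the limit. Sharp pointwise estimates on the differences $u_{1,\varepsilon}-u_{2,\varepsilon}$ in the neck region, together with fine expansions matching the Liouville-system profile to the Green's function decomposition, are the technical heart of the argument.
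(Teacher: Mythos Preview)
Your proposal correctly identifies Step~2 (the Pohozaev/location equation) as technically central, and the paper indeed devotes most of its length to establishing it rigorously. However, Step~3 has a genuine gap.

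First, the symmetry argument is invalid as stated: the involution $x\mapsto -x$ is a symmetry of the \emph{system} (swapping the two components), but a given solution sequence $(u_{1,\varepsilon},u_{2,\varepsilon})$ need not respect it, so you cannot conclude that the blow-up set $S$ is reflection-invariant or that a single blow-up point is a fixed point of the involution.

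Second, and more seriously, the location equation alone does not yield the conclusion. In the setting of \eqref{e102} one has $N_1=N_2=2$, hence $k=1$ and a single blow-up point $q$, and the location equation reduces to $\nabla G(q,p_1)=\nabla G(q,p_2)=0$. In the five-critical-point scenario this system \emph{does} have solutions: for instance $q-p_1=\tfrac12\omega_1$ and $q-p_2=\tfrac12\omega_2$ is consistent with $p_1-p_2$ being the half period $\tfrac12\omega_3$. So the claim that ``$\nabla\mathcal F=0$ is unsolvable in the five-critical-point geometry'' is false, and no Hessian dichotomy at the half periods rescues this.

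The paper's actual mechanism is different. Beyond the location equation, Theorem~\ref{main1} supplies a second necessary condition, $\mathcal D^{(2)}(q)\le 0$, where $\mathcal D^{(2)}$ is an explicit integral quantity built from the regular part of $G$ (see \eqref{e052}). One then invokes deep results of Lin--Wang \cite{LW2} and Chen--Kuo--Lin--Wang \cite{CKLW}: if $G$ has five critical points then $\mathcal D(\tfrac12\omega_k)>0$ for each half period and $\mathcal D(\pm a)=0$ for the extra pair, and moreover a four-torsion point can never be one of the extra critical points. A short case analysis on the pair $(q-p_1,\,q-p_2)$ then forces either $\mathcal D^{(2)}(q)>0$ (if either difference is a half period) or $a\in\{\tfrac14\omega_k\}$ (if both differences lie in $\{\pm a\}$, since then $p_1-p_2=\pm 2a$), each a contradiction. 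Your proposal is missing both the $\mathcal D^{(2)}\le 0$ condition and these external inputs from \cite{LW2,CKLW}; without them the argument cannot close.
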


The definition of fully bubbling solutions of the Liouville type will be given later in this section.

The proof of Theorem \ref{thm2} is based on recent developments on the Green's function over flat torus in \cite{LW1, LW2, CKLW} and necessary conditions of fully bubbling solutions in Theorem \ref{main1}.  Here we first give a brief introduction of the physical background of \eqref{e102}.

In the works of Hong, Kim and Pac\cite{HKP}, and Jackiw and Weinberg\cite{JW}, a model with one Chern-Simons gauge field was considered and selfdual Abelian Chern--Simons--Higgs vortices were constructed to describe anyonic solitons in 2+1 dimensions.
Later, Speilman et al.\cite{SFEG} observed no parity breaking in an experiment with high temperature superconductivity. Hagen \cite{Hagen} and  Wilczek \cite{Wil} indicated that the parity broken  may not  happen in a field theory with even number of Chern-Simons gauge fields.  One of the simplest models of this kind is the $[U(1)]^2$ Chern-Simons model of two Higgs fields, where each of them is coupled to one of two Chern--Simons fields.
%
The Lagrangian action  density for the  $[U(1)]^2$ Chern-Simons model is given by
\begin{equation}
 \mathcal{L}  =  -\frac{\varepsilon}{ 2}\epsilon^{\mu\nu\alpha}\left(A^{(1)}_\mu F^{(2)}_{\mu\nu}+A^{(2)}_\mu F^{(1)}_{\mu\nu}\right)+\sum\limits_{i=1}^2D_\mu\phi_i\overline{D^\mu\phi_i}-V(\phi_1, \phi_2), \label{a1}
\end{equation}
where $\varepsilon>0$ is a coupling parameter, $(A_\mu^{(1)})$ and $(A_\mu^{(2)})$   are two Abelian gauge fields with the electromagnetic fields $F^{(i)}_{\mu\nu}=\partial_\mu A_\nu^{(i)}-\partial_\nu A_\mu^{(i)}$,
$\phi_1$  and $\phi_2$ are two Higgs scalar fields with the  covariant derivatives $D_\mu\phi_i=\partial_\mu\phi_i-\mathrm{i} A^{(i)}_\mu\phi_i$($ \mu=0,1, 2,\,i=1,2 $), and the Higgs potential $ V(\phi_1,\phi_2) $ takes the following form:
 \begin{equation}
  V(\phi_1,\phi_2) =    \frac{1}{4\varepsilon^2}\left(|\phi_2|^2\left[|\phi_1|^2-1\right]^2+|\phi_1|^2\left[|\phi_2|^2-1\right]^2\right).
\end{equation}
 After a BPS reduction \cite{Bo,PS}, we obtain that the energy minimizer satisfies the following self-dual equation:
  \begin{equation}\label{er3}\left\{
\begin{array}{ll}
 &D_1\phi_k+\mathrm{i} D_2\phi_k=0, \quad k=1, 2 , \\
 & F^{(1)}_{12}  + \frac{1}{2\varepsilon^2}|\phi_2|^2\left(|\phi_1|^2-1\right)=0,\\
  & F^{(2)}_{12} + \frac{1}{2\varepsilon^2}|\phi_1|^2\left(|\phi_2|^2-1\right)=0.
   \end{array} \right.
\end{equation}
Let $u_{i,\varepsilon}=\ln |\phi_i|^2$, and $\{p_{i,1}, \dots, p_{i,{N_i}}\}$ be the zeros of $\phi_i$ for
 $i=1, 2$. Then $(u_{1,\varepsilon},u_{2,\varepsilon})$ satisfies
\begin{equation}\label{e001}
\left\{
\begin{array}{ll}
\Delta u_1+\frac{1}{\varepsilon^2}e^{u_2}(1-e^{u_1})=4\pi\sum_{i=1}^{N_1}\delta_{p_{1,i}}\\
\Delta u_2+\frac{1}{\varepsilon^2}e^{u_1}(1-e^{u_2})=4\pi\sum_{i=1}^{N_2}\delta_{p_{2,i}}
\end{array}
\text{  in  }\quad \Omega \right.,
\end{equation}
where $\Omega$ is either a flat torus or $\mathbb R^2$. See \cite{KLKLM,dz94,LPY} for the details of the derivation of \eqref{e001} from \eqref{er3}. In this paper, we consider the case of flat torus.  We refer the readers to \cite{LPY,LP,CCL,HL1,HHL1,CY2014} and reference therein for recent developments.\par

When $u_1\equiv u_2$ and $\{p_i^1\}_{i=1}^{N_1}=\{p_j^2\}_{j=1}^{N_2}$,  the system \eqref{e001}  reduces to the Abelian Chern--Simons equation with one Higgs particle proposed by  Kim--Pac\cite{HKP} and Jackiw--Weinberg\cite{JW},
\begin{equation} \label{eq17}
 \Delta u+\frac{1}{\varepsilon^2}e^u(1-e^u)=4\pi\sum_{i=1}^{N}\delta_{p_i},
\end{equation}
 which has been extensively studied for  more than  twenty years. We refer the readers to \cite{SY,T,CY1,C,Dunne1,T1,CFL,choe,CKL,DEFM} and the reference therein for more details.\par

Choe and Kim\cite{CK} established the Brezis-Merle type alternative \cite{BM} for \eqref{eq17}. They showed that \eqref{eq17} may have a sequence of solutions  satisfying the following:\\
\\{\it
 There is a finite set  $\{x_{1,\varepsilon},\cdots, x_{l,\varepsilon}\}$, $x_{j,\varepsilon}\in \Omega$, $j=1,\cdots,l$, such that, as $\varepsilon\to 0$
\begin{equation}\label{eq19}
u_{\varepsilon}(x_{j,\varepsilon})+2\ln \frac{1}{\varepsilon}\to\infty,\,\,j=1,\cdots, l,
\end{equation}
and
\begin{equation}\label{eq20}
u_{\varepsilon} +2\ln \frac{1}{\varepsilon}\to-\infty \text{ uniformly on any compact subset of }\Omega\setminus\{q_1,\cdots, q_l\},
\end{equation}
where $q_j=\lim_{\varepsilon}x_{j,\varepsilon}$.  }\par
Solutions of \eqref{eq17} satisfying \eqref{eq19} and \eqref{eq20} are called {\it bubbling solutions} or {\it blow-up solutions } and $q_j$ is called the blow-up point of the bubbling solution. We denote $\beta_{j,\varepsilon}=u_{\varepsilon}(x_{j,\varepsilon})$. It was shown in \cite{CK} that either $\beta_{j,\varepsilon}\to -\infty$ or  $\beta_{j,\varepsilon}$ is bounded. After   suitable re-scaling,  the bubbling solutions $u_{\varepsilon}$  converge  to an entire solution of either
\begin{equation}\label{eq411}
\Delta u+|x|^{2m}e^{u}(1-|x|^{m}e^u)=0,
\end{equation}
or
\begin{equation}\label{eq421}
\Delta u+|x|^{2m}e^{u}=0.
\end{equation}
Here, $m=0$ if $q_j$ is not a vortex point and $m=\#\{p_i: p_i=q_j\}$ if $q_j$ is a vortex point.
The bubbling solutions are  called {\it Chern-Simons type} if the limiting equation is \eqref{eq411}  and  {\it mean field type} if the limiting equation is \eqref{eq421}.  The existence and non-existence of bubbling solutions of \eqref{eq17} have been studied in a series of work of Lin and Yan\cite{LY,LY2,LY3}.\par

Set
\begin{equation}
u_{0,1}(x)=-4\pi\sum_{i=1}^{N_1} G(x,p_{1,i})\quad \text{              and              }\quad
u_{0,2}(x)=-4\pi\sum_{i=1}^{N_2} G(x, p_{2,i}).
\end{equation}
By the transformation  $u_i\to u_i+u_{0,i}$($i=1,2$), \eqref{e001} is reduced to the following system.

\begin{equation}\label{e002}
\begin{cases}
\Delta u_1+\frac{1}{\varepsilon^2}e^{u_2+u_{2,0}}(1-e^{u_1+u_{0,1}})=\frac{4\pi N_1}{|\Omega|}\\
\Delta u_2+\frac{1}{\varepsilon^2}e^{u_1+u_{1,0}}(1-e^{u_2+u_{0,2}})=\frac{4\pi N_2}{|\Omega|}
\end{cases}   \mbox{   in   }\Omega,
\end{equation}
where $u_1$ and $u_2$ are doubly periodic on $\partial\Omega$.\par

The sequence of solutions $(u_{1,\varepsilon},u_{2,\varepsilon})$ of \eqref{e002} is said to be {\it fully bubbling}  if there exist
$\{x_{1,j,\varepsilon}\}_{j=1}^k$ and $\{x_{2,j,\varepsilon}\}_{j=1}^k$ such that
\begin{itemize}
\item[(1)] $q_j=\lim_{\varepsilon\to 0}x_{1,j,\varepsilon}=\lim_{\varepsilon\to 0}x_{2,j,\varepsilon}$, $j=1,\cdots,k$. $u_{i,\epsilon}(x_{i,j,\epsilon})=\max_{B_d(q_j)}u_{i,\epsilon}$ for $i=1,2$, where
$B_d(q_j)$ is the ball centered at $q_j$ with radius $d$. Here we assume that $d>0$ is small so that $B_d(q_j)\cap B_d(q_m)=\emptyset$ if $j\neq m$.
\item[(2)]  $u_{i,\varepsilon}(x_{i,j,\varepsilon})+2\ln \frac{1}{\varepsilon} \to +\infty $  as $\varepsilon\to 0$, $i=1,2$, $j=1,\cdots,k$.
\item[(3)]   $u_{i,\varepsilon}(x)+2\ln \frac{1}{\varepsilon} \to -\infty $  as $\varepsilon\to 0$ uniformly on any compact set of $\Omega\setminus\{ q_1,\cdots,q_k\}$.
\end{itemize}

In this paper, we will  investigate  the {\it fully  bubbling solutions }to \eqref{e001} satisfying the following assumptions.
For  $i=1,2$, $j=1,\cdots,k$
\begin{itemize}
\item[(A1)] $\beta_{j,\varepsilon}=\max\{ u_{1,\varepsilon}(x_{1,j,\varepsilon}),u_{2,\varepsilon}(x_{2,j,\varepsilon})  \}\to -\infty$ as  $\varepsilon\to 0$,
\item[(A2)]  $ |u_{1,j,\varepsilon}(x_{1,j,\varepsilon})-u_{2,\varepsilon}(x_{2,j,\varepsilon})|=O(1)$
\item[(A3)]   $|x_{i,j,\varepsilon}-q_j|<C\varepsilon e^{-\frac{1}{2}\beta_{j,\varepsilon}}$ for some constant $C>0$.
\end{itemize}
Solutions satisfying (1),(2), (3) and (A1),(A2), (A3) are called
 {\it fully bubbling solutions of Liouville type} since after suitable rescaling,
their limiting equations are a Liouville system (see \eqref{e092} below).  \par

As mentioned before, one essential ingredient in the proof of Theorem \ref{thm2} is a necessary condition for the fully bubbling solutions of Liouville type whose blow-up points are regular points.  Before we state the second main result in this paper, we introduce some notations.  Let ${\bf q}=(q_1,\cdots,q_k)$, $q_i\in\mathbb{R}^2$, $i=1,\cdots,k$,
\begin{equation}
 G_1^*({\bf q})=\sum_{i=1}^k u_{0,1}(q_i)+8\pi \sum_{ 1\leq i<j\leq k}G(q_i,q_j)
 \end{equation}
and \begin{equation}
 G_2^*({\bf q})=\sum_{i=1}^k u_{0,2}(q_i)+8\pi \sum_{ 1\leq i<j\leq k}G(q_i,q_j).
\end{equation}
Denote the function $f_{i,j}$($i=1,2,$, $j=1,\cdots,k$) as follows.
\begin{equation}
f_{i,j}(x)= 8\pi\left(\gamma(x,q_j)-\gamma(q_j,q_j))+\sum_{l\not = j}(G(x,q_l)-G(q_j,q_l)\right)+u_{0,i}(x)-u_{0,i}(q_j),
\end{equation}
 where $\gamma(x,q)$  is the regular part of $G(x,p)$.   We define the quantity $\mathcal{D}^{(2)}({\bf q })$ as follows
\begin{equation}\label{e052}
\begin{split}
\mathcal{D}^{(2)}({\bf q})&=\lim_{\delta\to 0} \left( \sum_{j=1}^k  \frac{\rho_{1,j}}{ e^{u_{0,1}(q_1)}}
 \left(  \int_{\Omega_j\setminus B_{\delta}(q_j)   }\frac{e^{f_{1,j}}-1}{|x-q_{j }|^{4}}-  \int_{\mathbb{R}^2\setminus \Omega_j  } \frac{1}{|x-q_{j }|^{4}}  \right)\right.\\
 &\qquad +  \left.  \sum_{j=1}^k \frac{\rho_{2,j}}{e^{u_{0,2}(q_1)}}\left( \int_{\Omega_j\setminus B_{\delta} (q_j)  } \frac{e^{f_{2,j}}-1}{|x-q_{j }|^{4}}-  \int_{\mathbb{R}^2\setminus \Omega_j  } \frac{1}{|x-q_{j}|^{4} }  \right)\right)
\end{split}
\end{equation}
where  $\{\Omega_j\}_{j=1,\cdots,k}$ is any open set with
\begin{equation}\label{e108}
\begin{cases}
 (1)&\Omega_i\cap \Omega_j=\emptyset,\quad i\not =j ,\quad   i,j\in\{1,\cdots,k\} ,\\
 (2)&\cup_{j=1}^k \overline{\Omega_j}=\overline{\Omega} ,\\
 (3)& B_{\delta}(q_{j})\subset\subset \Omega_j , \quad j=1,\cdots,k ,
\end{cases}
\end{equation}
and
$
 \rho_{1,j}=e^{8\pi(\gamma(q_j,q_j)+\sum_{l\not=j}G(q_j,q_l)  ) +u_{0,1}(q_j)      }    ,   \quad
  \rho_{2,j}=e^{8\pi(\gamma(q_j,q_j)+\sum_{l\not=j}G(q_j,q_l)  ) +u_{0,2}(q_j)      }   . $
Note that throughout this article we use $B_\delta(q)$ to represent the ball centered at $q$ with radius $\delta$. If $q$ is the origin we use $B_\delta$.

\begin{thm}\label{main1}
Suppose $(u_{1,\varepsilon},u_{2,\varepsilon})$ is a sequence of fully bubbling solution of Liouville type to \eqref{e002} whose blow up set is $ {\bf q}=\{q_1,\cdots,  q_k\} \not\in\{p_{1,i},p_{2,i}\}_{i=1,2}$. Suppose in addition that $N_1=N_2=2k$. Then
\begin{enumerate}
\item $(u_{0,1}-u_{0,2})(q_i)=(u_{0,1}-u_{0,2})(q_j),\quad  1\leq i,j\leq k;$
\item
${\bf q}$ is  a critical point of $G_1^*$ and $G_2^*$;
\item
$\mathcal{D}^{(2)} ({\bf q})\leq 0$.
\end{enumerate}
\end{thm}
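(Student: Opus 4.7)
First I would blow up around each $q_j$ by setting $\tilde v_{i,j,\varepsilon}(y) = u_{i,\varepsilon}(q_j + \varepsilon e^{-\beta_{j,\varepsilon}/2}y) - \beta_{j,\varepsilon}$. Assumptions (A1)--(A3) and the structure of \eqref{e002} imply $\tilde v_{i,j,\varepsilon} \to \tilde V_i$ in $C^2_{\mathrm{loc}}(\mathbb R^2)$, and the shifted functions $U_i := \tilde V_i + u_{0,i}(q_j)$ solve the symmetric Liouville system $\Delta U_1 + e^{U_2} = 0$, $\Delta U_2 + e^{U_1} = 0$ on $\mathbb R^2$. The mass quantization $\int_{\mathbb R^2} e^{U_i} = 8\pi$ holds because $N_i = 2k$ distributes the total $4\pi N_i = 8\pi k$ equally among the $k$ bubbles. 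By the finite-mass classification of entire solutions to this symmetric system, $U_1 \equiv U_2$ is a standard Liouville bubble of the form $\log\bigl(8\lambda_j^2/(1+\lambda_j^2|y-a_j|^2)^2\bigr)$.

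\textbf{Stage 2 (Proof of (1) via the harmonic limit).} Since $N_1 = N_2$, the difference $w_\varepsilon := u_{1,\varepsilon} - u_{2,\varepsilon}$ solves
\begin{equation*}
\Delta w_\varepsilon = \frac{1}{\varepsilon^2}\bigl(e^{u_{1,\varepsilon}+u_{0,1}} - e^{u_{2,\varepsilon}+u_{0,2}}\bigr).
\end{equation*}
Stage 1 shows that both summands on the right concentrate mass $8\pi$ at every $q_j$, so the difference carries no limiting Dirac mass; outside the blow-up set it tends to zero uniformly by property (3). Hence $w_\varepsilon$ converges to a harmonic doubly-periodic function, which must be a constant $c$. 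Matching $c$ to the inner expansion at $q_j$, where $U_1 \equiv U_2$ forces $\tilde V_1 - \tilde V_2 \equiv u_{0,2}(q_j) - u_{0,1}(q_j)$, yields $(u_{0,1}-u_{0,2})(q_j) = -c$, independent of $j$, which is statement (1).

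\textbf{Stage 3 (Proof of (2) via matched expansion).} Away from the bubbles, the Green's representation gives $u_{i,\varepsilon}(x) - \bar u_{i,\varepsilon} = 8\pi\sum_l G(x,q_l) + o(1)$, whose local form near $q_j$ is $-4\log|x-q_j| + 8\pi\gamma(x,q_j) + 8\pi\sum_{l\neq j}G(x,q_l)$. Matching this outer expansion to the inner bubble profile to first order in $(x-q_j)$---equivalently, applying a Pohozaev identity on $B_\delta(q_j)$ obtained by multiplying the $i$-th equation by $\partial_{x_k}u_i$ and passing to the limit $\varepsilon \to 0$ then $\delta \to 0$---produces
\begin{equation*}
\nabla u_{0,i}(q_j) + 8\pi\sum_{l\neq j}\nabla_{x}G(q_j, q_l) = 0\qquad (i=1,2,\ j=1,\dots,k),
\end{equation*}
which is precisely $\nabla_{q_j}G_i^*({\bf q}) = 0$. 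The terms involving $\nabla_x\gamma(q_j,q_j)$ are absorbed by the translation freedom of the bubble center $a_j$.

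\textbf{Stage 4 (Proof of (3), the principal obstacle).} For the inequality, I would start from the integrated identity
\begin{equation*}
\int_\Omega e^{u_{3-i}+u_{0,3-i}} = 4\pi N_i \varepsilon^2 + \int_\Omega e^{u_1+u_{0,1}+u_2+u_{0,2}},
\end{equation*}
in which the right-most integral $J$ is a nonnegative Chern--Simons correction subleading to $\varepsilon^2$. Expanding each $\int_\Omega e^{u_i+u_{0,i}}$ beyond leading order $8\pi k \varepsilon^2$ by the inner/outer split, using
\begin{equation*}
e^{u_{i,\varepsilon}(x)+u_{0,i}(x)} \simeq \rho_{i,j} e^{\bar u_{i,\varepsilon}} |x-q_j|^{-4} e^{f_{i,j}(x)}
\end{equation*}
in $\Omega_j\setminus B_\delta(q_j)$, the divergent $\pi/r^2$ contribution of the outer cancels against the bubble-tail deficit $-8\pi/(\lambda_j^2 R^2)$ of the inner, and the residual combines precisely into the bracket defining $\mathcal D^{(2)}({\bf q})$, the coefficients $\rho_{i,j}/e^{u_{0,i}(q_1)}$ being $i$-independent by Part (1). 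The principal difficulty is the sign determination: the leading-order matching does not by itself fix the direction, and one must simultaneously track next-order corrections to the outer expansion---in particular the self-consistent contribution $\int G(x,y)\,\tfrac{1}{\varepsilon^2} e^{u_1+u_{0,1}+u_2+u_{0,2}}(y)\,dy$ to the Green's representation of $u_{i,\varepsilon}$, which is of the same order---to conclude $\mathcal D^{(2)}({\bf q}) \leq 0$. Independence of $\mathcal D^{(2)}({\bf q})$ from the auxiliary partition $\{\Omega_j\}$ is a divergence-theorem consequence of Part (2).
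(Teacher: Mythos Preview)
Your Stage 3 contains the essential gap. You propose to obtain $\nabla_{q_j}G_i^*({\bf q})=0$ for each $i$ separately by ``multiplying the $i$-th equation by $\partial_{x_k}u_i$,'' but for this system that does not produce a clean divergence: the $i$-th equation carries $e^{u_{3-i}+u_{0,3-i}}$, so $e^{u_{3-i}}\partial_{x_k}u_i$ is not a gradient. The only Pohozaev identity with divergence structure comes from pairing the first equation with $\partial_{x_k}u_2$ and the second with $\partial_{x_k}u_1$ and adding; this is exactly Lemma~\ref{lem32}, and after inserting $m_{i,j}^*=8\pi$ it yields only that ${\bf q}$ is critical for $G_1^*+G_2^*$. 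The paper explicitly flags this: ``Due to the coupling term $e^{u_1+u_2}$ \ldots\ we only know that the blow-up set ${\bf q}$ is the critical point of $G_1^*+G_2^*$.'' Your remark that the $\nabla_x\gamma(q_j,q_j)$ contribution is absorbed by the translation freedom of $a_j$ does not rescue this: shifting the bubble center is a single two-dimensional freedom and cannot kill two independent gradient conditions.

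The paper's route to the separate conditions $Df_{1,j}(q_j)=Df_{2,j}(q_j)=0$ (Lemma~\ref{lem47}) is fundamentally different and is the analytic heart of the argument. One first builds sharp error estimates for $\eta_{i,j,\varepsilon}=u_{i,\varepsilon}-\beta_{j,\varepsilon}-U_{i,j,\varepsilon}^*-\text{(Green tail)}$ (Propositions~\ref{pro31}, \ref{pro41}, \ref{pro42}), then tests the linearized error equation against the scaling kernel $\psi_{i,j}^*=rV_{i,j,\varepsilon}'(r)+2$ via Green's identity. The resulting balance \eqref{e025} reads
\[
\sum_j\Bigl(B_{j,\varepsilon}e^{\beta_{j,\varepsilon}}+\sum_i C_{i,j,\varepsilon}|Df_{i,j,\varepsilon}(x_{j,\varepsilon})|^2\mu_{j,\varepsilon}^{-2}\ln\mu_{j,\varepsilon}\Bigr)+\sum_{i,j}\frac{(\tilde M_{i,j,\varepsilon}-8\pi)^2}{\tilde M_{i,j,\varepsilon}-4\pi}=o(e^{\beta_{j,\varepsilon}})+O(\mu_{j,\varepsilon}^{-2}),
\]
with $B_{j,\varepsilon},C_{i,j,\varepsilon}>0$. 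Every term on the left is nonnegative, so each $|Df_{i,j,\varepsilon}(x_{j,\varepsilon})|$ is forced to vanish in the limit. This positivity mechanism---not a direct Pohozaev---is what decouples the two components, and it simultaneously controls the mass deviations $\tilde M_{i,j,\varepsilon}-8\pi$ on each component, which you will also need.

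Your Stage 4 inherits the same deficiency. You correctly identify that matching alone does not fix the sign, but the resolution is again the scaling-kernel identity: once $Df_{i,j}(q_j)=0$ is known and the error estimates are pushed to $0<\tau<1$, the refined version of the same Green's identity (Lemma~\ref{final-lem}, equation \eqref{e088}) produces $\mathcal D^{(2)}({\bf q})$ on one side and the strictly positive terms $\sum_j B_{j,\varepsilon}e^{\beta_{j,\varepsilon}}+\sum_{i,j}(\tilde M_{i,j,\varepsilon}-8\pi)^2/(\tilde M_{i,j,\varepsilon}-4\pi)$ on the other, from which $\mathcal D^{(2)}({\bf q})\le 0$ follows immediately. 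Your integrated mass identity and the Green's-representation correction you mention are not the objects that carry the sign; the nonnegative Chern--Simons term $J$ and the squared mass deviations, entering through the scaling test function, are.

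Your Stages 1--2 are reasonable sketches (the harmonic-limit argument for (1) is a legitimate alternative to the paper's matching in Remark~\ref{rem51}), but without the sharp error machinery and the $\psi^*$-identity you cannot close either (2) or (3).
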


The proof of Theorem \ref{main1} is established by delicate blowup analysis, which is involved with handling difficulties that had never appeared before. One crucial estimate is to prove that the blowup solutions are very close to a sequence of global solutions. In order to achieve this goal we need to prove that the energy of the blowup solutions is very close to that of the global solution
{\bf on each component!} It is well known that for such systems there is only one Pohozaev identity, which is not enough for us to determine the energy of each component precisely. In this article we take advantage of the special form of the main equation to overcome this major difficulty. Our approach is likely to impact the study of bubbling solutions of many Liouville systems. The detailed discuss will be carried out in later sections. The following theorem of the first author actually proves that fully bubbling solutions of Liouville type do exist:

{\bf Theorem A} \cite{H-2015} \,\,{\it Let $\{q_1,\cdots, q_k\}$ be regular points.  Assume that
\begin{itemize}
\item[$(1)$] ${\bf q}$ be a critical point of $G_1^*$ and $G_2^*$
\item[$(2)$] ${\bf q}$ be non-degenerate critical point of $G_1^*+G_2^*$
\item[$(3)$] $\mathcal{D}^{(2)}({\bf q})<0$
\item[$(4)$] $(u_{0,1}-u_{0,2})(q_i)=(u_{0,1}-u_{0,2})(q_j)$, $1\leq i,j\leq k.$
\end{itemize}
Then there exists a sequence of fully bubbling solutions of Liouville type to \eqref{e002}.}\\
\\

The organization of this paper is as follows. The second section is devoted to the proof of Theorem \ref{thm2}.  In Section 3, we obtain the rough profile of the bubbling solutions.
In Section 4, we define the approximating solutions to the bubbling solution and establish estimates of error terms. Then in Section 5, we
prove that the blow-up set ${\bf q}$ is the critical point of $G_1^*$ and $G_2^*$, and further improve the estimate in Section 4, which leads to the proof of Theorem \ref{main1}.

\section{Proof of Theorem \ref{thm2}}
In the beginning of this section, we present some properties of the Green function defined on a flat torus  $\Omega=\mathbb{R} \slash \mathbb{Z}\omega_1 \times \mathbb{R} \slash \mathbb{Z}\omega_2 $. Lin and Wang\cite{LW1} used nonlinear partial differential equations to study the number of
critical points of Green function on flat torus.
\begin{thm}\cite{LW1}
The Green function has at most five critical points.
\end{thm}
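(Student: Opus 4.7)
The plan is to count the critical points of $G$ by realizing them as zeros of a concrete transcendental function built from Weierstrass elliptic data, and then reducing to an algebraic counting problem. First I would pass to complex coordinates on $\Omega$ and represent $G$ via the Weierstrass $\sigma$-function as
$$G(z)=-\tfrac{1}{2\pi}\log|\sigma(z)|+Q(z),$$
where $Q$ is a quadratic polynomial in $z,\bar z$ chosen so that $G$ is doubly periodic, the necessity of $Q$ being dictated by the Legendre relation $\eta_1\omega_2-\eta_2\omega_1=2\pi i$. Taking a Wirtinger derivative yields $\partial_z G(z)=-\tfrac{1}{4\pi}(\zeta(z)-\alpha z-\beta\bar z)$ for explicit real constants $\alpha,\beta$, so the critical-point equation $\nabla G=0$ is equivalent to
$$F(z):=\zeta(z)-\alpha z-\beta\bar z=0.$$

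Next I would exploit parity. Because $G(-z)=G(z)$ and $\zeta$ is odd, the function $F$ is odd, so the three half-periods $\tfrac12\omega_1,\tfrac12\omega_2,\tfrac12(\omega_1+\omega_2)$ are automatically zeros of $F$, supplying three ``free'' critical points of $G$. Any other zero $z_0$ pairs with $-z_0$, and so the theorem reduces to showing that at most one such pair of non-half-period critical points exists. I would then split $F(z)=0$ into its real and imaginary parts, use the identity $\zeta'(z)=-\wp(z)$, and employ the algebraic relation $\wp'(z)^2=4\wp(z)^3-g_2\wp(z)-g_3$ to eliminate $\bar z$ from the system, producing a single polynomial equation in $p:=\wp(z)$.

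The main obstacle is controlling the degree of this resulting polynomial and identifying which of its roots already correspond to the half-periods $\wp(\tfrac12\omega_i)=e_i$. I expect a direct computation, leaning heavily on the Legendre relation to replace the $\bar z$-dependence by $\wp$-derivatives, to yield a polynomial of degree exactly four in $p$ which always contains the cubic factor $\prod_{i=1}^3(p-e_i)$ associated with the half-periods, leaving a single extra root. Each such root $p_0$ corresponds to at most one pair $\{z_0,-z_0\}$ of critical points on the torus, so the total number of critical points is bounded by $3+2=5$, as claimed.
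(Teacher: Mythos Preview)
The paper does not supply its own proof of this theorem; it quotes the result from Lin--Wang \cite{LW1} and explicitly records that their argument is based on \emph{nonlinear partial differential equations}, namely the link between extra critical points of $G$ and blow-up solutions of the mean field equation $\Delta u+e^u=8\pi\delta_0$. So there is no proof in the paper to compare against, and your proposed route---a purely elliptic-function/algebraic count---is by design a different strategy from the one being cited.

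That said, your plan has a genuine gap at the elimination step. The critical-point equation you correctly derive,
\[
F(z)=\zeta(z)-\alpha z-\beta\bar z=0,
\]
is a pointwise condition on isolated points $z$, not an identity in $z$. You therefore cannot ``use $\zeta'(z)=-\wp(z)$'' to pass from $\zeta$ to $\wp$: differentiating a pointwise equation is meaningless, and there is no algebraic relation $P(\zeta,\wp)=0$ to invoke, since $\zeta$ is transcendental over the field generated by $\wp$ and $\wp'$ (it is a primitive of $-\wp$). Taking real and imaginary parts and trying to eliminate $\bar z$ only lets you express $z$ and $\bar z$ as real-linear combinations of $\zeta(z)$ and $\overline{\zeta(z)}$; the anti-holomorphic quantity $\overline{\zeta(z)}$ does not reduce to anything polynomial in $\wp(z)$. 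Consequently the asserted degree-$4$ polynomial in $p=\wp(z)$ with the factor $\prod_i(p-e_i)$ never materializes, and the counting $3+2=5$ is unsupported.

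If you want an algebraic handle on the extra critical points, the known approaches (see \cite{LW2,CKLW}) go through much deeper structure---pre-modular forms, the Lam\'e/Painlev\'e\,VI correspondence---rather than a direct resultant computation. The original bound of five in \cite{LW1} is obtained by the PDE/deformation method, not by the elimination you outline.
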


Note that half periods  $\frac{1}{2}\omega_1$, $\frac{1}{2}\omega_2$ and $\frac{1}{2}(\omega_1+\omega_2)=\frac{1}{2}\omega_3$
are three obvious critical points of $G(x,0)$ and the other two critical points(non-trivial critical points) can be denoted as $\pm a$.

Denote
\begin{equation}
\mathcal{D}(q)=e^{-8\pi G(q,0)}\left( \int_{\Omega}\frac{ e^{8\pi(\gamma(z,q)-\gamma(q,q)+G(q,0)-G(z,0))}-1}{|z-q|^4}                             -\int_{\mathbb{R}^2\setminus \Omega}\frac{1}{|z-q|^4} \right)
\end{equation}

By \cite{LW2} and \cite{CKLW}, we know the sign of $\mathcal{D}(q)$.

\begin{thm}\label{thm53} \cite{LW2,CKLW}
Suppose $G(x,0)$ has extra critical points, then
$$\mathcal{D}(\frac{\omega_k}{2})>0, \,\,k=1,2,3,$$
and
$$\mathcal{D}(\text{extra critical point})=0.$$
\end{thm}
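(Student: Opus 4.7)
The plan is to identify $\mathcal{D}(q)$ with the second-order Pohozaev obstruction that governs bubble concentration for the scalar mean field equation $\Delta u+\rho e^{u}=8\pi\delta_{0}$ on $\Omega$, and then read the sign of $\mathcal{D}$ off from the Lin--Wang classification of blow-up points in \cite{LW1} together with the explicit elliptic-function computation in \cite{CKLW}.

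First I would verify that $\mathcal{D}(q)$ is well-defined at any critical point $q\neq 0$ of $g(\cdot):=G(\cdot,0)$. Writing $F(z):=\gamma(z,q)-\gamma(q,q)+g(q)-g(z)$, translation invariance $\gamma(z,q)=\gamma(z-q,0)$, the Robin-constant identity $\gamma(q,q)=\gamma(0,0)$, and the critical-point condition $\nabla g(q)=0$ combine to give $F(q)=0$, $\nabla F(q)=0$, and $\Delta F(q)=\tfrac{1}{|\Omega|}-\tfrac{1}{|\Omega|}=0$. Consequently the integrand of $\mathcal{D}(q)$ is a traceless quadratic divided by $|z-q|^{4}$ near $q$, giving a $|z-q|^{-2}$-singularity with vanishing angular mean, so the principal-value integral converges and $\mathcal{D}(q)\in\mathbb{R}$ is well-defined.

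The main step is to exhibit $\mathcal{D}(q)$ as the invariant controlling bubble concentration at $q$. Plugging the standard Liouville bubble $U_{q,\lambda}$ centered at $q$ with scale $\lambda$ into the mean field equation and applying a Pohozaev identity on $\Omega\setminus B_{\delta}(q)$---balancing boundary terms from the Green-function expansion of the background against the exterior integral correction $\int_{\mathbb{R}^{2}\setminus\Omega}|z-q|^{-4}dz$ appearing in the definition of $\mathcal{D}$---yields an expansion $\rho-8\pi=c\lambda^{2}\mathcal{D}(q)+o(\lambda^{2})$ with an explicit positive constant $c$. Any blow-up sequence with $\rho_{n}\to 8\pi$ concentrating at $q$ therefore forces $\mathcal{D}(q)\le 0$, with equality precisely when the limiting profile lies in a one-parameter scaling family. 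Under the hypothesis that $g$ admits extra critical points $\pm a$, Lin--Wang \cite{LW1} exhibit exactly such a one-parameter family of bubbling solutions concentrating at $\pm a$, whose scaling degeneracy forces $\mathcal{D}(\pm a)=0$; moreover no blow-up concentrates at any half period, so $\mathcal{D}(\tfrac{1}{2}\omega_{k})\neq 0$ with a definite sign.

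The main obstacle is upgrading $\mathcal{D}(\tfrac{1}{2}\omega_{k})\neq 0$ to strict positivity: the no-bubble conclusion from \cite{LW1} alone does not a priori rule out the borderline case $\mathcal{D}(\tfrac{1}{2}\omega_{k})=0$. I would close this using the explicit Weierstrass $\wp$-representation of $G$ on the flat torus: following \cite{CKLW} one writes $\mathcal{D}(q)$ in closed form in terms of $\wp$ and $\zeta$ values at the lattice halves and at $q$, and the existence of extra critical points $\pm a$ translates into an algebraic identity on the lattice invariants that simultaneously forces $\mathcal{D}(\pm a)=0$ and $\mathcal{D}(\tfrac{1}{2}\omega_{k})>0$. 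Carrying out this explicit identification and the ensuing sign verification is the technical heart of the proof.
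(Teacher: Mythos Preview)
The paper does not prove this theorem. Theorem~\ref{thm53} is stated with the citation \cite{LW2,CKLW} attached directly to it and is invoked as an external result in the proof of Theorem~\ref{thm2}; no argument for it appears anywhere in the paper. There is therefore no ``paper's own proof'' to compare your proposal against.

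That said, a few comments on your sketch as a standalone outline. The identification of $\mathcal{D}(q)$ with the second-order bubbling obstruction for the scalar equation is correct in spirit and is indeed how \cite{LW2} proceeds. Your argument for $\mathcal{D}(\pm a)=0$ via the one-parameter scaling family from \cite{LW1} is the right mechanism. However, your treatment of the half-period case has a logical gap that you partially acknowledge: the Pohozaev expansion gives $\mathcal{D}(q)\le 0$ \emph{when} blow-up occurs at $q$, but the mere absence of blow-up at $\tfrac{1}{2}\omega_k$ does not yield $\mathcal{D}(\tfrac{1}{2}\omega_k)\ne 0$, let alone $>0$. The actual argument in \cite{CKLW} is not a perturbation of the PDE reasoning but a direct computation: one expresses $\mathcal{D}(q)$ in terms of Eisenstein series and Weierstrass data, and the positivity at half-periods under the five-critical-point hypothesis comes from modular/algebraic identities rather than from any bubbling dichotomy. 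Your last paragraph gestures at this, but you should be aware that this explicit computation is the entire content of the half-period case, not a closing technicality.
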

 ({\bf Proof of Theorem \ref{thm2}})
Let  $(u_{1,\varepsilon},u_{2,\varepsilon})$ be  a Liouville type fully bubbling solution of
\begin{equation}
\begin{cases}
\Delta u_{1,\varepsilon}+\frac{1}{\varepsilon^2}e^{u_{2,\varepsilon}}(1-e^{u_{1,\varepsilon}})=8\pi \delta_{p_1}\\
\Delta u_{2,\varepsilon}+\frac{1}{\varepsilon^2}e^{u_{1,\varepsilon}}(1-e^{u_{2,\varepsilon}})=8\pi \delta_{p_2}
\end{cases}
\end{equation}
and $p_1-p_2\in\{\frac{\omega_k}{2}  \}_{k=1,2,3}$. We assume that $q$ is the blow-up point. We will prove this theorem by contradiction, so assume the critical points of $G(x,0)$ are $\{\frac{\omega_k}{2}, \pm a \}_{k=1,2,3}.$   By Theorem \ref{main1}, $q$ is a critical point of $G(x,p_1)$ and $G(x, p_2)$, i.e.
\begin{equation}
\nabla G(q,p_1)=\nabla G(q-p_1,0)=0\quad\text{   and  }\quad \nabla G(q,p_2)=\nabla G(q-p_2,0)=0.
\end{equation}
Thus,  $q-p_1\in\{\frac{\omega_k}{2}, \pm a \}_{k=1,2,3}$ and  $q-p_2\in\{\frac{\omega_k}{2}, \pm a \}_{k=1,2,3}$. We will discuss all possible cases and  obtain a contradiction.\par
 Note that
\begin{equation}
\mathcal{D}^{(2)}(q)=\frac{\mathcal{D}(q-p_1)}{e^{-8\pi G(q,p_1)}}+\frac{\mathcal{D}(q-p_2)}{e^{-8\pi G(q,p_2)}}.
\end{equation}
Thus, if one of $q-p_1$ and $q-p_2$ belongs to $\{\frac{\omega_k}{2} \}_{k=1,2,3}$, Theorem \ref{thm53} implies
$\mathcal{D}^{(2)}(q)>0$, which is a contradiction to Theorem \ref{main1}.\par
Finally, if  $q-p_1$, $q-p_2\in \{\pm a\}$, we may assume
$q-p_1=a$ and $q-p_2=-a$ which implies $p_2-p_1=2a$. Since we assume $p_2-p_1\in \{\frac{\omega_k}{2}  \}_{k=1,2,3}$, thus
$a\in  \{\frac{\omega_k}{4}  \}_{k=1,2,3}$ which is called {\it four torsion point}. By Theorem 1.1 of \cite{CKLW}, a four torsion point
cannot be a critical point of $G(x,0)$. We thus conclude that $G(x, 0)$ has exactly three critical points.

\section{Profile of the bubbling solutions}
In this section, we show the local uniform estimate for the bubbling solution. Here we recall that $(u_{1,\varepsilon},u_{2,\varepsilon})$ is a sequence of fully bubbling solutions of Liouville type to \eqref{e002}.

We assume that $\max\{u_{1,\varepsilon}(x),u_{2,\varepsilon}(x)\}$ is attained at $x_{j,\varepsilon}$ for $x$ near $q_j$
and set
\begin{equation}\label{scal-fac}
\mu_{j,\varepsilon}=\varepsilon^{-1}e^{\frac{1}{2}\beta_{j,\varepsilon}}.
\end{equation}
Here we recall that $\beta_{j,\epsilon}\to -\infty$ and $\mu_{j,\epsilon}\to \infty$ as $\epsilon\to 0$. Let $\bar u_{i,\epsilon}$ be defined as
$$\overline{u}_{i,\varepsilon}(x)=u_{i,\varepsilon}(\mu_{j,\varepsilon}^{-1}x+x_{j,\varepsilon})-\beta_{j,\varepsilon},\quad i=1,2.$$
Then we prove that $(\overline{u}_{1,\varepsilon}(x),\overline{u}_{2,\varepsilon}(x))$ can be accurately approximated by the solutions to the following Liouville system
\begin{equation}\label{e092}
\begin{cases}
\Delta V_{1,j,\varepsilon}+e^{u_{0,2}(x_{j,\varepsilon})}e^{V_{2,j,\varepsilon}}=0\\
\Delta V_{2,j,\varepsilon}+e^{u_{0,1}(x_{j,\varepsilon})}e^{V_{1,j,\varepsilon}}=0 \quad \mbox{ in }\quad \mathbb R^2, \\
\end{cases}
\end{equation}
and we use $(\tilde{M}_{1,j,\varepsilon},\tilde{M}_{2,j,\varepsilon})=(\int_{\mathbb R^2}e^{u_{0,2}(x_{j,\varepsilon})}e^{V_{2,j,\varepsilon}},\int_{\mathbb R^2} e^{u_{0,1}(x_{j,\varepsilon})}e^{V_{1,j,\varepsilon}}  )$ to denote the energy of $(V_{1,j,\epsilon},V_{2,j,\epsilon})$. Fix $\delta>0$, we set the local energy of $(u_{1,\varepsilon}, u_{2,\varepsilon})$
\begin{equation}
\begin{cases}
m_{1,j,\varepsilon}=\frac{1}{\varepsilon^2}\int_{B_{\delta}(x_{j,\varepsilon})} e^{u_{2,j,\varepsilon}+u_{0,2}}(1-e^{u_{1,j,\varepsilon}+u_{0,1}}),\\
 m_{2,j,\varepsilon}=\frac{1}{\varepsilon^2}\int_{B_{\delta}(x_{j,\varepsilon})} e^{u_{1,j,\varepsilon}+u_{0,1}}(1-e^{u_{2,j,\varepsilon}+u_{0,2}}),
\end{cases}
\end{equation}
and the partition energy of $(u_{1,\varepsilon}, u_{2,\varepsilon})$
\begin{equation}
\begin{cases}
M_{1,j,\varepsilon}=\frac{1}{\varepsilon^2}\int_{ \Omega_j} e^{u_{2,j,\varepsilon}+u_{0,2}}(1-e^{u_{1,j,\varepsilon}+u_{0,1}}),\\
M_{2,j,\varepsilon}=\frac{1}{\varepsilon^2}\int_{\Omega_j} e^{u_{1,j,\varepsilon}+u_{0,1}}(1-e^{u_{2,j,\varepsilon}+u_{0,2}}),
\end{cases}
\end{equation}
where $\Omega_j$ is defined in (\ref{e108}).
One crucial step in the proof of Theorem \ref{main1} is the closeness of $m_{i,j,\epsilon}$ and $\tilde M_{i,j,\epsilon}$ as well as $8\pi$ and $\tilde M_{i,j,\epsilon}$. It is easy to prove that $\tilde M_{i,j,\epsilon}$ satisfies

\begin{equation}\label{e090}  \frac{1}{\tilde{M}_{1,j,\varepsilon}}+\frac{1}{\tilde{M}_{2,j,\varepsilon}}=\frac{1}{4\pi}.
\end{equation}  But in order to obtain accurate estimate between $\bar u_{i,\epsilon}$ and $V_{i,j,\epsilon}$ we need precise estimate of $8\pi-\tilde M_{i,j,\epsilon}$ for each $i=1,...,k$. In this section we shall first establish
\begin{equation}\label{e094}
m_{i,j,\varepsilon}-8\pi=o(1),\quad i=1,2,\,\,j=1,\cdots,k,
\end{equation}
and eventually we shall prove
\begin{equation}\label{e093}
 \tilde{M}_{i,j,\varepsilon}-8\pi =O( \mu_{j,\varepsilon}^{-1}+e^{\frac{1}{2}\beta_{j,\varepsilon}}).
\end{equation}
It is also interesting to compare our system with  $SU(3)$ Toda system.
For the result (2) in Theorem \ref{main1}, it is not simply obtained  by the same argument in  $SU(3)$ Toda system\cite{LWZ2012}  because of different dimension of kernel space at an entire solution. Due to   the coupling term $e^{u_1+u_2}$ in \eqref{e001}, we only know that the blow-up set ${\bf q}$ is  the critical point of $G_1^*+G_2^*$(see Lemma \ref{lem32}). Another difficulty  arises from this fact when we estimate   the error term $(\eta_{1,j,\varepsilon},\eta_{2,j,\varepsilon})$(see \eqref{e060}).\par

\begin{prop}\label{pro21}
Let $(u_{1,\varepsilon},u_{2,\varepsilon})$ be a bubbling solution of \eqref{e002}, then
\begin{equation}\label{e057}
\left|u_{i,j,\varepsilon}-\beta_{j,\varepsilon}-\frac{m_{i,j,\varepsilon}}{2\pi}\ln\frac{1}{| \mu_{j,\varepsilon}(x-x_{j,\varepsilon}) |}\right|\leq C
\end{equation}
for $x\in B_{\delta}(x_{j,\varepsilon})\setminus B_{  \mu_{j,\varepsilon}^{-1} }(x_{\varepsilon})$. Furthermore,

\begin{equation}
 |m_{i,j,\varepsilon}-8\pi|=o(1) ,\,\,i=1,2,\,\,j=1,\cdots,k.
\end{equation}

\end{prop}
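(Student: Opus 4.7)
My plan is to blow up around $x_{j,\varepsilon}$, identify the rescaled limit as an entire Liouville system, pin down its masses as $8\pi$, and convert this information via a Green's function representation into the pointwise estimate \eqref{e057}. First I would introduce $\bar u_{i,\varepsilon}(y)=u_{i,\varepsilon}(\mu_{j,\varepsilon}^{-1}y+x_{j,\varepsilon})-\beta_{j,\varepsilon}$ and multiply the $i$-th equation of \eqref{e002} by $\mu_{j,\varepsilon}^{-2}=\varepsilon^{2}e^{-\beta_{j,\varepsilon}}$, which brings it to the form
\[
\Delta\bar u_{i,\varepsilon}+e^{\bar u_{i',\varepsilon}+u_{0,i'}(\mu_{j,\varepsilon}^{-1}y+x_{j,\varepsilon})}\bigl(1-e^{\bar u_{i,\varepsilon}+\beta_{j,\varepsilon}+u_{0,i}(\mu_{j,\varepsilon}^{-1}y+x_{j,\varepsilon})}\bigr)=O(\mu_{j,\varepsilon}^{-2}),
\]
with $\{i,i'\}=\{1,2\}$. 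By (A1), $\beta_{j,\varepsilon}\to-\infty$, so the Chern-Simons correction factor $1-e^{\bar u_{i}+\beta+u_{0,i}}$ tends to $1$ locally uniformly, while $u_{0,i'}(\mu_{j,\varepsilon}^{-1}y+x_{j,\varepsilon})\to u_{0,i'}(q_j)$. Combined with the normalization $\bar u_{i,\varepsilon}(0)=O(1)$ from (A2)-(A3), standard elliptic regularity and a Brezis-Merle dichotomy (whose alternative blow-up scenario is excluded by the full bubbling hypothesis (3)), this yields $\bar u_{i,\varepsilon}\to V_{i,j}$ in $C^{2}_{\mathrm{loc}}(\mathbb{R}^{2})$, with $(V_{1,j},V_{2,j})$ solving \eqref{e092}.

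The second and crucial step is to establish $\tilde M_{1,j}=\tilde M_{2,j}=8\pi$. A global integration of \eqref{e002}, together with Brezis-Merle excluding any mass outside the blow-up set, gives $\sum_{j}m_{i,j,\varepsilon}\to 4\pi N_{i}$, and Fatou's lemma then yields finite mass for each $(V_{1,j},V_{2,j})$, so the two-dimensional Pohozaev identity applied to \eqref{e092} produces the relation \eqref{e090}. Because \eqref{e090} leaves a one-parameter family of possible masses, the real task is the rigidity $\tilde M_{1,j}=\tilde M_{2,j}$. I plan to extract it by combining the logarithmic asymptotics $V_{i,j}(y)=-\tfrac{\tilde M_{i,j}}{2\pi}\log|y|+O(1)$ at infinity (from the Newtonian representation of $V_{i,j}$) with uniform boundedness of $\bar u_{1,\varepsilon}-\bar u_{2,\varepsilon}$ on balls of arbitrarily large rescaled radius. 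That boundedness follows from (A2) at the origin together with elliptic propagation, since $\Delta(\bar u_{1,\varepsilon}-\bar u_{2,\varepsilon})$ reduces to the bounded difference $e^{\bar u_{2}+u_{0,2}(\cdot)}-e^{\bar u_{1}+u_{0,1}(\cdot)}$ modulo $O(\mu_{j,\varepsilon}^{-2})$. A globally bounded $V_{1,j}-V_{2,j}$ forces the logarithmic leading coefficients to coincide, and then \eqref{e090} forces both masses to equal $8\pi$.

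For the pointwise estimate \eqref{e057} I would use a Green's function representation of $u_{i,\varepsilon}$ on $B_{\delta}(x_{j,\varepsilon})$. The source $-\Delta u_{i,\varepsilon}$ is concentrated on the scale $\mu_{j,\varepsilon}^{-1}$ and integrates to $m_{i,j,\varepsilon}$; a two-dimensional Newtonian potential calculation then shows that in the annulus $\mu_{j,\varepsilon}^{-1}\lesssim|x-x_{j,\varepsilon}|\le\delta$ the leading contribution is exactly $-\tfrac{m_{i,j,\varepsilon}}{2\pi}\log|\mu_{j,\varepsilon}(x-x_{j,\varepsilon})|$, while the harmonic and boundary-trace remainders are $O(1)$, by condition (3) on $\partial B_{\delta}(x_{j,\varepsilon})$ combined with Harnack-type estimates in the annulus and the $C^{2}_{\mathrm{loc}}$ convergence from Step 1. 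Finally, rewriting $m_{i,j,\varepsilon}$ in rescaled coordinates and invoking dominated convergence (with majorant supplied by the pointwise bound just obtained) delivers $m_{i,j,\varepsilon}\to\tilde M_{i,j}=8\pi$. The hard part will be the rigidity at the end of Step 2: Pohozaev alone is underdetermined, and upgrading the pointwise height comparability (A2) to a global $L^{\infty}$ bound on $V_{1,j}-V_{2,j}$ is exactly where this rank-two Chern-Simons system departs from the $SU(3)$ Toda analysis of \cite{LWZ2012}.
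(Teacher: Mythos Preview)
Your overall architecture---rescale at $x_{j,\varepsilon}$, identify the limiting Liouville system, pin down the masses as $8\pi$, then run a Green representation to get the pointwise estimate---matches the paper's. The gap is in your rigidity step for $\tilde M_{1,j}=\tilde M_{2,j}$. Knowing that $\Delta(\bar u_{1,\varepsilon}-\bar u_{2,\varepsilon})$ is pointwise bounded and that the difference is $O(1)$ at the origin does \emph{not} force a uniform bound on balls of arbitrarily large radius: a function with bounded Laplacian can grow quadratically, and ``elliptic propagation'' from a single point gives bounds that depend on the radius. Even if you pass to the limit and note that $\Delta(V_{1,j}-V_{2,j})=e^{u_{0,1}(q_j)}e^{V_{1,j}}-e^{u_{0,2}(q_j)}e^{V_{2,j}}$ is integrable, potential theory only yields
\[
V_{1,j}(y)-V_{2,j}(y)=-\frac{\tilde M_{1,j}-\tilde M_{2,j}}{2\pi}\log|y|+O(1),
\]
so boundedness of the difference is \emph{equivalent} to $\tilde M_{1,j}=\tilde M_{2,j}$, and the argument is circular. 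Assumption (A2) only controls the values at the two maxima, which is a single-point statement and cannot by itself exclude asymmetric entire solutions such as $(\tilde M_{1,j},\tilde M_{2,j})=(6\pi,12\pi)$ on the Pohozaev curve.

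The paper settles this without ever trying to bound $V_{1,j}-V_{2,j}$. It combines three ingredients: the global constraint $\sum_{j=1}^{k}m_{i,j}^{*}=8k\pi$ for each $i$ (this is where $N_1=N_2=2k$ is used, and it is exactly the fact you record but do not exploit), Fatou's inequality $m_{i,j}^{*}\ge M_{i,j}$, and an algebraic rewriting of the Pohozaev identity $M_{1,j}+M_{2,j}=\frac{M_{1,j}M_{2,j}}{4\pi}$ as
\[
(M_{1,j}-8\pi)+(M_{2,j}-8\pi)=\frac{(M_{1,j}-8\pi)^{2}}{M_{1,j}-4\pi}=\frac{(M_{2,j}-8\pi)^{2}}{M_{2,j}-4\pi}\ge 0.
\]
Summing over $i$ and $j$ gives
\[
0\ \ge\ \sum_{i,j}(M_{i,j}-m_{i,j}^{*})\ =\ \sum_{i,j}(M_{i,j}-8\pi)\ =\ \frac{1}{2}\sum_{i,j}\frac{(M_{i,j}-8\pi)^{2}}{M_{i,j}-4\pi}\ \ge\ 0,
\]
which squeezes every $M_{i,j}$ to $8\pi$ and then, by Fatou again together with the global sum, every $m_{i,j}^{*}$ to $8\pi$. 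This coupling of the local Pohozaev relation with the global mass budget is precisely the ``special form of the main equation'' the introduction alludes to; it replaces the missing second Pohozaev identity and is the piece your proposal lacks. Once $m_{i,j}^{*}=8\pi$ is known, the paper's route to \eqref{e057} is also slightly different from yours in ordering: it first extracts a preliminary decay $u_{i,\varepsilon}(x)-u_{i,\varepsilon}(x_{j,\varepsilon})\le -(4-\theta')\log(1+\mu_{j,\varepsilon}|x-x_{j,\varepsilon}|)+C$ from the Green representation (using that almost all of the mass $8\pi$ is already captured on a fixed rescaled ball), and then feeds that decay back into the same representation to upgrade to the sharp coefficient $m_{i,j,\varepsilon}/2\pi$.
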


The proof of this proposition is quite long and starts with some preliminary lemmas.  We first investigate the behavior of $(u_{1,\varepsilon},u_{2,\varepsilon})$ away from the blow-up set $\{q_1,\cdots,q_k\}$.
Even though we assume $N_1=N_2=2k$, the following two lemmas are still true  for $(N_1,N_2)$ satisfing
$$\frac{N_1}{4k}+\frac{N_2}{4k}=1. $$

\begin{lem}\label{lem21}
For small $\theta >0$, it holds
\begin{equation}
u_{i,\varepsilon}-\frac{1}{|\Omega|}\int_{\Omega}u_{i,\varepsilon}\to \sum_{j=1}^k m_{i,j}^* G(q_j,x) \quad \mbox{in    }C^1(\Omega\setminus\cup_{j=1}^k B_{\theta}(q_j))
\end{equation}
as $\varepsilon\to 0$, where $m_{i,j}^* =\lim_{\varepsilon\to 0}m_{i,j,\varepsilon}$, $i=1,2,$ $j=1,\cdots,k.$
\end{lem}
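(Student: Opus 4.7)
Set $v_{i,\varepsilon}:=u_{i,\varepsilon}-\frac{1}{|\Omega|}\int_\Omega u_{i,\varepsilon}$, which is mean zero and satisfies
\begin{equation*}
-\Delta v_{i,\varepsilon} \;=\; h_{i,\varepsilon}-\frac{4\pi N_i}{|\Omega|}, \qquad h_{1,\varepsilon}:=\tfrac{1}{\varepsilon^2}e^{u_{2,\varepsilon}+u_{0,2}}(1-e^{u_{1,\varepsilon}+u_{0,1}}),
\end{equation*}
and similarly for $h_{2,\varepsilon}$. By condition (3) in the definition of fully bubbling, $u_{i,\varepsilon}+2\ln(1/\varepsilon)\to-\infty$ uniformly on every compact $K\subset\Omega\setminus\{q_1,\ldots,q_k\}$, so $h_{i,\varepsilon}\to 0$ uniformly on $K$ (the factor $e^{u_{0,i}}$ is bounded away from the vortex points and vanishes like $|x-p|^2$ at each vortex point, so it does not disturb the decay). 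Integrating the original equation over $\Omega$ and using periodicity yields $\int_\Omega h_{i,\varepsilon}=4\pi N_i$; combined with the uniform decay outside the blow-up neighborhoods and the definition of $m_{i,j,\varepsilon}$, this forces
\begin{equation*}
\sum_{j=1}^k m_{i,j}^* \;=\; 4\pi N_i, \qquad h_{i,\varepsilon}\;\rightharpoonup\;\sum_{j=1}^k m_{i,j}^*\,\delta_{q_j}
\end{equation*}
as measures on $\Omega$.

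Normalizing $G$ so that $\int_\Omega G(x,y)\,dx=0$ (then by symmetry $\int_\Omega G(x,y)\,dy=0$ as well), the mean-zero function $v_{i,\varepsilon}$ admits the representation
\begin{equation*}
v_{i,\varepsilon}(x)\;=\;\int_\Omega G(x,y)\,h_{i,\varepsilon}(y)\,dy,
\end{equation*}
since the constant $4\pi N_i/|\Omega|$ integrates to zero against $G(x,\cdot)$. For any $x\notin\{q_1,\ldots,q_k\}$, splitting the integral over a small ball $B_r(x)$ and its complement and using that $G(x,\cdot)$ is continuous away from $x$ while $h_{i,\varepsilon}\to 0$ uniformly on $B_r(x)$, the distributional convergence above gives the pointwise limit $v_{i,\varepsilon}(x)\to\sum_{j=1}^k m_{i,j}^*G(x,q_j)$. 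To upgrade to $C^1$ convergence on $\Omega\setminus\cup_j B_\theta(q_j)$, set $w_{i,\varepsilon}:=v_{i,\varepsilon}-\sum_j m_{i,j}^*G(\cdot,q_j)$; using $-\Delta G(\cdot,q_j)=\delta_{q_j}-|\Omega|^{-1}$ together with $\sum_j m_{i,j}^*=4\pi N_i$ one finds $-\Delta w_{i,\varepsilon}=h_{i,\varepsilon}-\sum_j m_{i,j}^*\delta_{q_j}$, whose right-hand side is smooth and tends to $0$ uniformly on $\Omega\setminus\cup_j B_{\theta/2}(q_j)$. Since $w_{i,\varepsilon}$ is uniformly bounded on that set (directly from the Green representation) and converges pointwise to $0$, standard interior elliptic estimates upgrade the convergence to $C^1$ on $\Omega\setminus\cup_j B_\theta(q_j)$.

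The main obstacle is the mass accounting $\sum_{j=1}^k m_{i,j}^*=4\pi N_i$, that is, showing no mass of $h_{i,\varepsilon}$ escapes the neighborhoods $B_\delta(x_{j,\varepsilon})$ in the limit. Leakage onto compact subsets of $\Omega\setminus\{q_j\}$ is excluded by the uniform decay of $h_{i,\varepsilon}$ noted above; the subtler possibility of concentration at a vortex point $p_{l,m}$ not in the blow-up set is ruled out by the $|x-p_{l,m}|^2$ zero of $e^{u_{0,l}}$ (equivalently, by the pointwise bound $|\phi_l|\le 1$ inherent in the Chern-Simons self-dual reduction). Once this is settled, the rest is routine Green-function convolution and interior elliptic regularity.
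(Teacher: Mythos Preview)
Your proof is correct and follows essentially the same approach as the paper: both use the Green representation $v_{i,\varepsilon}(x)=\int_\Omega G(x,y)\,h_{i,\varepsilon}(y)\,dy$ together with the concentration of $h_{i,\varepsilon}$ at the blow-up points. The paper is marginally more direct, writing
\[
\int_\Omega G(y,x)h_{i,\varepsilon}(y)\,dy=\sum_j m_{i,j}^*G(q_j,x)+\sum_j\int_{B_\theta(q_j)}\big(G(y,x)-G(q_j,x)\big)h_{i,\varepsilon}(y)\,dy+o(1)
\]
and bounding the middle term via the first-moment estimate $\int_{B_\theta(q_j)}|y-q_j|\,h_{i,\varepsilon}=o(1)$ (then repeating with $\nabla_xG$ for the $C^1$ statement), whereas you route through weak measure convergence and interior elliptic estimates on $w_{i,\varepsilon}$; these are equivalent packagings of the same idea.

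One remark: your final paragraph on possible concentration at vortex points is unnecessary. Condition (3) in the definition of fully bubbling already asserts $u_{i,\varepsilon}+2\ln(1/\varepsilon)\to-\infty$ uniformly on \emph{every} compact subset of $\Omega\setminus\{q_1,\dots,q_k\}$, vortex points included; since $e^{u_{0,i}}$ is bounded above and $0\le 1-e^{u_{l,\varepsilon}+u_{0,l}}\le 1$ (the maximum principle gives $u_l+u_{0,l}\le 0$), this immediately yields $h_{i,\varepsilon}\to 0$ uniformly away from the blow-up set, with no separate treatment of the vortices required.
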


\begin{proof} We only prove the case $i=1$.
By the assumptions on the fully bubbling solutions, we find for any $\theta\in(0,\delta)$,
\begin{equation}
\frac{1}{\varepsilon^2}e^{u_{2,\varepsilon}+u_{0,2}}(1-e^{u_{1,\varepsilon}+u_{0,1}})=o(1) \quad \mbox{in    }\Omega\setminus\cup_{j=1}^k B_{\theta}(q_j)
\end{equation}
So,
\begin{equation*}
\frac{1}{\varepsilon^2}\int_{B_{\theta}(q_j)}e^{u_{2,\varepsilon}+u_{0,2}}(1-e^{u_{1,\varepsilon}+u_{0,1}})=m_{1,j,\varepsilon}+o(1)=m_{1,j}^*+o(1).
\end{equation*}
By this and Green representation,  we see that for $x\in \Omega\setminus\cup_{j=1}^k B_{\theta}(q_j)$ as $\varepsilon\to 0$,
\begin{equation*}
\begin{split}
&u_{1,\varepsilon}(x)-\frac{1}{|\Omega|}\int_{\Omega}u_{1,\varepsilon}\\
=&\frac{1}{\varepsilon^2}\int_{\Omega} G(y,x)e^{u_{2,\varepsilon}+u_{0,2}}(1-e^{u_{1,\varepsilon}+u_{0,1}})\\
=&\sum_{j=1}^k m_{1,j}^* G(q_j,x)+ \frac{1}{\varepsilon^2}\sum_{j=1}^k\int_{B_{\theta}(q_i)}\Big( G(y,x)-G(q_j,x)\Big)e^{u_{2,\varepsilon}+u_{0,2}}(1-e^{u_{1,\varepsilon}+u_{0,1}}) +o(1)\\
=&\sum_{j=1}^k m_{1,j}^* G(q_j,x)+o(1)
\end{split}
\end{equation*}
where the following fact is used:
\begin{equation}\label{new-a}
\frac{1}{\varepsilon^2} \int_{B_{\theta}(q_j)}|y-q_j|e^{u_{2,\varepsilon}+u_{0,2}}(1-e^{u_{1,\varepsilon}+u_{0,1}})=o(1).
\end{equation}
Indeed, by $\frac{1}{\varepsilon^2} e^{u_{2,\epsilon}+u_{0,2}}(1-e^{u_{1,\epsilon}+u_{0,1}})\to 0$ away from blowup points and the fact that $|y-q_j|\to 0$ as $y\to q_j$, we see that (\ref{new-a}) holds easily.

Similarly we have
\begin{equation}\label{e066}
Du_{1,\varepsilon}(x)= \sum_{j=1}^k m_{1,j}^* DG(q_j,x)+o(1), \quad x\in \Omega\setminus\cup_{j=1}^k B_{\theta}(q_j).
\end{equation}
\end{proof}

The location of the blow-up point can be determined by the Pohozaev identity below.

\begin{lem}\label{lem32} For $j=1,\cdots, k$, it holds
\begin{equation}\label{e054}
\begin{split}
 &m_{1,j}^*\left(\frac{\partial u_{0,2}}{\partial x_h}\Big\vert_{x=q_j}+\sum_{l\not=k, 1\leq l\leq k}   m^*_{2,l} \frac{\partial G(q_l,q_j)}{\partial x_h}\right)\\
 +&m_{2,j}^*\left(\frac{\partial u_{0,1}}{\partial x_h}\Big\vert_{x=q_j}+\sum_{l\not=k, 1\leq l\leq k}   m_{1,l}^*  \frac{\partial G(q_l,q_j)}{\partial x_h} \right)=0 .
 \end{split}
\end{equation}
\end{lem}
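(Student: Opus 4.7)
The plan is to derive a local Pohozaev identity on $B_\delta(q_j)$, pass to the limit $\varepsilon\to 0$ using Lemma \ref{lem21}, and then send $\delta\to 0$. Concretely, I would multiply the first equation of \eqref{e002} by $\partial_h u_{2,\varepsilon}$ and the second by $\partial_h u_{1,\varepsilon}$, add, and integrate over $B_\delta(q_j)$. By the standard divergence identity,
$$\int_{B_\delta(q_j)}\!\bigl(\Delta u_1\partial_h u_2+\Delta u_2\partial_h u_1\bigr)\,dx=\int_{\partial B_\delta(q_j)}\!\bigl[\partial_h u_1\partial_\nu u_2+\partial_\nu u_1\partial_h u_2-\nu_h\,\nabla u_1\cdot\nabla u_2\bigr]\,d\sigma,$$
the Laplacian side becomes a pure boundary integral; the two constants $\frac{4\pi N_i}{|\Omega|}$ on the right-hand side of \eqref{e002} contribute only $O(\delta^2)$ after integration against $\partial_h u_i$.

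For the nonlinear contribution I would use the identity $e^{u_i+u_{0,i}}\partial_h u_i=\partial_h(e^{u_i+u_{0,i}})-e^{u_i+u_{0,i}}\partial_h u_{0,i}$, and similarly decompose $e^{u_1+u_2+u_{0,1}+u_{0,2}}\partial_h(u_1+u_2)$. The perfect-derivative pieces turn into boundary integrals of $\frac{1}{\varepsilon^2}e^{u_i+u_{0,i}}$ on $\partial B_\delta(q_j)$, which are of order $O(\mu_{j,\varepsilon}^{-2}\delta^{-3})\to 0$ in view of \eqref{e057}. The coupled piece $\frac{1}{\varepsilon^2}e^{u_1+u_2+u_{0,1}+u_{0,2}}$ scales on the bubble as $\mu_{j,\varepsilon}^2 e^{\beta_{j,\varepsilon}}$, so its $L^1$-norm is $O(e^{\beta_{j,\varepsilon}})=o(1)$ by assumption (A1). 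What survives is
$$\frac{1}{\varepsilon^2}\int_{B_\delta(q_j)}\bigl(e^{u_1+u_{0,1}}\partial_h u_{0,1}+e^{u_2+u_{0,2}}\partial_h u_{0,2}\bigr)\,dx.$$
Since $\frac{1}{\varepsilon^2}e^{u_i+u_{0,i}}\rightharpoonup m_{3-i,j}^*\delta_{q_j}$ as measures (the $(1-e^{u_{3-i}+u_{0,3-i}})$ factor in $m_{3-i,j,\varepsilon}$ is negligible by the same estimate on the coupling), this integral tends to $m_{2,j}^*\,\partial_h u_{0,1}(q_j)+m_{1,j}^*\,\partial_h u_{0,2}(q_j)$.

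To evaluate the boundary integral I would substitute the $C^1$-expansion from Lemma \ref{lem21}:
$$u_{i,\varepsilon}(x)=-\tfrac{m_{i,j}^*}{2\pi}\ln|x-q_j|+R_{i,\varepsilon}(x)+o(1)\quad\text{in } C^1(\partial B_\delta(q_j)),$$
where $\nabla R_{i,\varepsilon}(q_j)\to m_{i,j}^*\nabla_x\gamma(q_j,q_j)+\sum_{l\ne j}m_{i,l}^*\nabla G(q_l,q_j)$. Using the elementary identities $\int_{\partial B_\delta}\nu_h\,d\sigma=0$ and $\int_{\partial B_\delta}\nu_h\nu_k\,d\sigma=\pi\delta\,\delta_{hk}$, the $\delta^{-2}$ singularities cancel between the three terms, the $R_i$--$R_j$ contribution is $O(\delta)$, and what remains in the limit $\delta\to 0$ is
$$-4m_{1,j}^*m_{2,j}^*\partial_h\gamma(q_j,q_j)-m_{1,j}^*\sum_{l\ne j}m_{2,l}^*\partial_h G(q_l,q_j)-m_{2,j}^*\sum_{l\ne j}m_{1,l}^*\partial_h G(q_l,q_j).$$
Because $\Omega$ is a flat torus, $G(x,p)=G(x-p,0)$ with $G(\cdot,0)$ even, hence $\gamma(\cdot,0)$ is even and $\partial_h\gamma(q_j,q_j)=0$. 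Equating the two limits and rearranging yields \eqref{e054}.

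The main obstacle is the careful boundary-term bookkeeping: each of the three terms $\partial_h u_1\partial_\nu u_2$, $\partial_\nu u_1\partial_h u_2$, and $\nu_h\nabla u_1\cdot\nabla u_2$ carries a $\delta^{-2}$ singularity, and only their specific combination with coefficients $(1,1,-1)$ causes the leading divergences to cancel on a circle, leaving the finite expression above. A secondary subtlety, absent in ordinary Liouville systems and intrinsic to the Chern-Simons structure, is showing that the coupling $\frac{1}{\varepsilon^2}e^{u_1+u_2+u_{0,1}+u_{0,2}}$ drops out; this relies decisively on the Liouville-type scaling $\mu_{j,\varepsilon}^{-2}=\varepsilon^2 e^{-\beta_{j,\varepsilon}}$ from \eqref{scal-fac} together with (A1).
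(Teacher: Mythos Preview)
Your proposal is correct and follows essentially the same Pohozaev-identity strategy as the paper: multiply the two equations by $\partial_h u_{2,\varepsilon}$ and $\partial_h u_{1,\varepsilon}$ respectively, integrate over a small ball, reduce the nonlinear side to $m_{1,j}^*\partial_h u_{0,2}(q_j)+m_{2,j}^*\partial_h u_{0,1}(q_j)$, and evaluate the boundary term via Lemma~\ref{lem21}. The paper streamlines two of your steps by first subtracting $\frac{N_i\pi|x-q_j|^2}{|\Omega|}$ from $u_{i,\varepsilon}$, which absorbs the constant source $\frac{4\pi N_i}{|\Omega|}$ into smooth coefficients $h_i$ and makes the perfect-derivative boundary terms appear directly as the last line of \eqref{e031}.

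One caution: you invoke \eqref{e057} to kill the boundary integrals of $\frac{1}{\varepsilon^2}e^{u_i+u_{0,i}}$, but in the paper's logical order that estimate is only established later (Lemma~\ref{lem25}). This is not actually circular---Lemma~\ref{lem32} is not used in the proof of Proposition~\ref{pro21}---but it is cleaner, and what the paper in effect does, to use the fully-bubbling assumption~(3) directly: $u_{i,\varepsilon}+2\ln\frac{1}{\varepsilon}\to -\infty$ uniformly on $\partial B_\delta(q_j)$ already gives $\frac{1}{\varepsilon^2}e^{u_{i,\varepsilon}+u_{0,i}}\to 0$ there. Also, the coefficient in front of $m_{1,j}^*m_{2,j}^*\partial_h\gamma(q_j,q_j)$ should be $-2$, not $-4$ (the singular--regular cross terms contribute $-m_{2,j}^*\partial_h R_1(q_j)-m_{1,j}^*\partial_h R_2(q_j)$, each $R_i$ carrying one factor $m_{i,j}^*\partial_h\gamma$); this is immaterial since, as you correctly observe, $\nabla_x\gamma(q_j,q_j)=0$ on a flat torus.
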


\begin{proof}
Fix $j\in\{1,\cdots,k\}$ and  let
$$\overline{u}_{1,\varepsilon}=u_{1,\varepsilon}-\frac{N_1\pi|x-q_j|^2}{|\Omega|},\quad\overline{u}_{2,\varepsilon}=u_{2,\varepsilon}-\frac{N_2\pi|x-q_j|^2}{|\Omega|} . $$
Then $(\overline{u}_{1,\varepsilon},\overline{u}_{2,\varepsilon})$
satisfies
\begin{equation}\label{e041}
\begin{cases}
\Delta \overline{u}_{1,\varepsilon} +\frac{1}{\varepsilon^2} h_2(x)e^{\overline{u}_{2,\varepsilon}+u_{0,2}}(1- h_1(x)e^{\overline{u}_{1,\varepsilon}+u_{0,1}})=0\\
\Delta \overline{u}_{2,\varepsilon} +\frac{1}{\varepsilon^2} h_1(x)e^{\overline{u}_{1,\varepsilon}+u_{0,1}}(1- h_2(x)e^{\overline{u}_{2,\varepsilon}+u_{0,2}})=0
\end{cases}
\end{equation}
where $h_1(x)=e^{\frac{N_1\pi|x-q_j|^2}{|\Omega|}}$ and  $h_2(x)=e^{\frac{N_2\pi|x-q_j|^2}{|\Omega|}}$. For $h=1,2,$ we consider
 the following Pohozaev identity for \eqref{e041}, which is obtained by multiplying $D_h\bar u_{2,\epsilon}$ to the first equation and $D_h\bar u_{1,\epsilon}$ to the second equation and integration by parts:
\begin{equation}\label{e031}
\begin{split}
&\int_{\partial B_{\theta}(q_j)} \Big(  \left<\nu, D\overline{u}_{1,\varepsilon}\right>D_h \overline{u}_{2,\varepsilon}
+\left<\nu, D\overline{u}_{2,\varepsilon}\right>D_h \overline{u}_{1,\varepsilon}- \nu_h\left< D\overline{u}_{1,\varepsilon} ,D\overline{u}_{2,\varepsilon} \right>  \Big) \\
=&\frac{1}{\varepsilon^2}\int_{B_{\theta}(q_j)} \Big( e^{u_{2,\varepsilon}+u_{0,2}}(1-e^{u_{1,\varepsilon}+u_{0,1}})(D_h u_{0,2}+     \frac{2N_2\pi (x-q_j)_h }{|\Omega|})\Big.\\
& \qquad\qquad \quad \Big.  +e^{u_{1,\varepsilon}+u_{0,1}}(1-e^{u_{2,\varepsilon}+u_{0,2}})(D_h u_{0,1}+\frac{2N_1\pi (x-q_j)_h}{|\Omega|} )\Big)\\
& -\frac{1}{\varepsilon^2} \int_{\partial B_{\theta}(q_j)}   \nu_h ( e^{u_{1,\varepsilon}+u_{0,1}}+e^{u_{2,\varepsilon}+u_{0,2}}-e^{u_{1,\varepsilon}+u_{0,1}+u_{2,\varepsilon}+u_{0,2}}    )
\end{split}
\end{equation}
where $\nu$ stands for the outer normal vector and $\nu_h=<\nu,h>$.
Then by Lemma \ref{lem21} and (\ref{new-a}) the right hand side (RHS) of (\ref{e031}) tends to

\begin{equation} \label{e055}
 m_{1,j}^*D_h u_{0,2}(q_j)+m_{2,j}^* D_h u_{0,1}(q_j).
\end{equation}
In order to evaluate the left hand side of (\ref{e031}) we introduce the following functions:

\begin{equation}\label{e065}
F_i(x,q_j)=\frac{1}{|\Omega|}\int_{\Omega} u_{i,\varepsilon}+\sum_{l=1}^k M_{i,j,\varepsilon} G(x,q_l)-\frac{N_i\pi|x-q_j|^2}{|\Omega|},
\end{equation}
for $i=1,2$ and $j=1,\cdots,k$. It is easy to see that
$\Delta F_i(x,q_j)=0$ for $x\not=q_j$. By this, \eqref{e065} and \eqref{e066},  we obtain
\begin{equation}\label{e056}
\begin{split}
&\mbox{ LHS of    }\eqref{e031}\\
=& \int_{\partial B_{\theta}(q_j)}\left< \nu, D F_2(x,q_j)\right>D_h F_1(x,q_j) +\left< \nu, D F_1(x,q_j)\right>D_h F_2(x,q_j)\\
&-  \int_{\partial B_{\theta}(q_j)}\nu_h\left<  D F_1(x,q_j), D F_2(x,q_j)\right>+o_{\varepsilon}(1)\\
=& \int_{\partial B_{\theta}(q_j)}\left< \nu, D \sum_{l=1}^k m_{2,l}^* G(x,q_l)   )\right>D_h \sum_{l=1}^k m_{1,l}^* G(x,q_l)\\
  &\quad +\left< \nu, D \sum_{l=1}^k m_{1,l}^* G(x,q_l)\right>D_h \sum_{l=1}^k m_{2,l}^* G(x,q_l)\\
  &- \left< D \sum_{l=1}^k m_{1,l}^* G(x,q_l) , D \sum_{l=1}^k m_{2,l}^* G(x,q_l)\right>\nu_h+o_{\varepsilon}(1)+o_{\theta}(1)\\
=&  -\sum_{l\not=k, 1\leq l\leq k} (m_{1,j}^*m^*_{2,l} +m_{2,j}^* m_{1,l}^*   )\frac{\partial G(q_l,q_j)}{\partial x_h}+o_{\varepsilon}(1)+o_{\theta}(1).
\end{split}
\end{equation}

Thus,
\eqref{e054} follows from \eqref{e055} and \eqref{e056}.

\end{proof}

Next, we will show that $m^*_{i,j}=\lim_{\varepsilon\to 0} m_{i,j,\varepsilon}=8\pi$ when $N_1=N_2=2k$ for $i=1,2$, $j=1,\cdots,k$.

\begin{lem}\label{lem8pi}
For  $i=1,2$, $j=1,\cdots,k$, it holds
\begin{equation}\label{e107}
m^*_{i,j}=8\pi.
\end{equation}
\end{lem}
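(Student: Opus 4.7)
The plan is to reduce the lemma to a rigidity statement for positive numbers $(m_{1,j}^{*},m_{2,j}^{*})$ obeying two very different constraints, and then finish with Cauchy--Schwarz.

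First, I would obtain \textbf{global sum constraints} by integrating each equation of \eqref{e002} over $\Omega$. Because $u_{i,\varepsilon}$ is doubly periodic, $\int_{\Omega}\Delta u_{i,\varepsilon}=0$, and the right--hand side contributes $4\pi N_i$. The fully bubbling assumption (item (3)) forces $\varepsilon^{-2}e^{u_{2,\varepsilon}+u_{0,2}}(1-e^{u_{1,\varepsilon}+u_{0,1}})$ to tend uniformly to zero on $\Omega\setminus\cup_{j}B_{\delta}(q_j)$, so the global integral concentrates in the balls $B_{\delta}(q_j)$ and we can pass to the limit using Lemma \ref{lem21} to get
\[
\sum_{j=1}^{k} m_{1,j}^{*} \;=\; 4\pi N_1 \;=\; 8\pi k, \qquad \sum_{j=1}^{k} m_{2,j}^{*} \;=\; 4\pi N_2 \;=\; 8\pi k,
\]
where the last equalities use $N_1=N_2=2k$.

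Next, I would derive a \textbf{local constraint} at each blow-up point from the Liouville limit. Changing variables $x=\mu_{j,\varepsilon}^{-1}y+x_{j,\varepsilon}$ in $m_{1,j,\varepsilon}$ and using $\varepsilon^{-2}\mu_{j,\varepsilon}^{-2}=e^{-\beta_{j,\varepsilon}}$ together with the smoothness of $u_{0,i}$ at $q_j$, one gets
\[
m_{1,j,\varepsilon} \;=\; \int_{B_{\mu_{j,\varepsilon}\delta}}\! e^{u_{0,2}(x_{j,\varepsilon})+\bar u_{2,\varepsilon}}\bigl(1-e^{\bar u_{1,\varepsilon}+\beta_{j,\varepsilon}+u_{0,1}(x_{j,\varepsilon})+o(1)}\bigr)\,dy.
\]
Since $\beta_{j,\varepsilon}\to-\infty$ the correction factor $1-e^{\cdots}$ tends to $1$, and $\bar u_{i,\varepsilon}\to V_{i,j}$ locally uniformly with $(V_{1,j},V_{2,j})$ solving \eqref{e092}. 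A dominated-convergence argument based on the decay of the Liouville-system limit then yields $m_{i,j,\varepsilon}\to\tilde M_{i,j}$. Combining this with the Pohozaev-type identity \eqref{e090} gives
\[
\frac{1}{m_{1,j}^{*}}+\frac{1}{m_{2,j}^{*}} \;=\; \frac{1}{4\pi}, \qquad j=1,\dots,k,
\]
or equivalently $(m_{1,j}^{*}-4\pi)(m_{2,j}^{*}-4\pi)=16\pi^{2}$, in particular both energies exceed $4\pi$.

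Finally, I would conclude by \textbf{Cauchy--Schwarz}. Set $s_j=m_{1,j}^{*}-4\pi>0$ and $t_j=m_{2,j}^{*}-4\pi>0$. The previous two steps read $\sum_j s_j=\sum_j t_j=4\pi k$ and $s_j t_j=16\pi^{2}$. Then
\[
(4\pi k)^{2} \;=\; \Bigl(\sum_{j=1}^{k}s_j\Bigr)\Bigl(\sum_{j=1}^{k}t_j\Bigr) \;\geq\; \Bigl(\sum_{j=1}^{k}\sqrt{s_j t_j}\Bigr)^{2} \;=\; (4\pi k)^{2},
\]
so Cauchy--Schwarz holds with equality. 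This forces the ratio $s_j/t_j$ to be independent of $j$, and together with $s_j t_j=16\pi^{2}$ it makes all $s_j$ equal. The sum constraint $\sum s_j=4\pi k$ then pins down $s_j=4\pi$, hence $m_{1,j}^{*}=m_{2,j}^{*}=8\pi$ for every $j$.

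The main obstacle is the second step: producing a uniform integrable majorant that legitimizes $m_{i,j,\varepsilon}\to\tilde M_{i,j}$. This needs an $\varepsilon$-uniform decay of $\bar u_{i,\varepsilon}$ at scales up to $|y|\asymp\mu_{j,\varepsilon}\delta$, not merely on compacta, and rests on the classification of entire solutions of the Liouville system \eqref{e092}. This is also precisely the issue that the paper later refines when it upgrades $m_{i,j,\varepsilon}-8\pi=o(1)$ to the quantitative rate in \eqref{e093}.
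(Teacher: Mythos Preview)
Your global sum constraint and your Cauchy--Schwarz finish are both correct, but the second step has a genuine gap that you yourself flag: at this point in the argument there is \emph{no} uniform majorant available to turn the $C^1_{\mathrm{loc}}$ convergence $\bar u_{i,\varepsilon}\to V_{i,j}$ into convergence of the full integrals $m_{i,j,\varepsilon}\to \tilde M_{i,j}$. The decay estimate you would need (something like $\bar u_{i,\varepsilon}(y)\le -(2+\epsilon)\ln(1+|y|)+C$ uniformly in $\varepsilon$) is exactly the content of Lemma~\ref{lem24}, and Lemma~\ref{lem24} is proved \emph{after} Lemma~\ref{lem8pi} and in fact uses \eqref{e107} as input. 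So as written your argument is circular: the Pohozaev relation $\frac{1}{m_{1,j}^{*}}+\frac{1}{m_{2,j}^{*}}=\frac{1}{4\pi}$ is not yet known for $m^{*}$, only for the limiting energies $M_{i,j}$.

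The paper sidesteps this by using only the Fatou inequality $m_{i,j}^{*}\ge M_{i,j}$, which requires no tail control. Combined with $\sum_j m_{i,j}^{*}=8\pi k$ this gives $\sum_{i,j}(M_{i,j}-8\pi)\le 0$. On the other hand the Pohozaev constraint for $(M_{1,j},M_{2,j})$ rewrites as
\[
(M_{1,j}-8\pi)+(M_{2,j}-8\pi)=\frac{(M_{1,j}-8\pi)^2}{M_{1,j}-4\pi}\ge 0,
\]
so the double sum is also $\ge 0$, forcing $M_{i,j}=8\pi$; then $m_{i,j}^{*}\ge 8\pi$ together with $\sum_j m_{i,j}^{*}=8\pi k$ pins down $m_{i,j}^{*}=8\pi$. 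Note that this is essentially an AM--GM argument (from $(M_{1,j}-4\pi)(M_{2,j}-4\pi)=16\pi^2$ one gets $M_{1,j}+M_{2,j}\ge 16\pi$), which is the one-sided cousin of your Cauchy--Schwarz step and is all that the weaker Fatou input can support. Your Cauchy--Schwarz route would go through if you first established a preliminary $(2+\epsilon)$-decay for $\bar u_{i,\varepsilon}$ directly from $M_{i,j}>4\pi$, but that is extra work the paper's approach avoids.
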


\begin{proof}
Set
$$v_{1,j,\varepsilon}(x)=u_{1,\varepsilon}(\mu_{j,\varepsilon}^{-1}x+x_{j,\varepsilon})-\beta_{j,\varepsilon},
v_{2,j,\varepsilon}(x)=u_{2,\varepsilon}(\mu_{j,\varepsilon}^{-1}x+x_{j,\varepsilon})-\beta_{j,\varepsilon}.$$
Then $(v_{1,j,\varepsilon},v_{2,j,\varepsilon})$ satisfies
\begin{equation}
\begin{cases}
 \Delta v_{1,j,\varepsilon}+ e^{v_{2,j,\varepsilon}+u_{0,2}(\mu_{j,\varepsilon}^{-1}x+x_{j,\varepsilon})}( 1-e^{\beta_{j,\varepsilon}}   e^{v_{1,j,\varepsilon}+u_{0,1}(\mu_{j,\varepsilon}^{-1}x+x_{j,\varepsilon})}             )=
 \frac{4\pi N_1 \mu_{j,\varepsilon}^{-2} }{ |\Omega|},\\
  \Delta v_{2,j,\varepsilon}+ e^{v_{1,j,\varepsilon}+u_{0,1}(\mu_{j,\varepsilon}^{-1}x+x_{j,\varepsilon})}( 1-e^{\beta_{j,\varepsilon}}   e^{v_{2,j,\varepsilon}+u_{0,2}(\mu_{j,\varepsilon}^{-1}x+x_{j,\varepsilon})}             )=\frac{4\pi N_2\mu_{j,\varepsilon}^{-2}}{ |\Omega|}.
\end{cases}
\end{equation}
Note that $\mu_{j,\varepsilon}^{-1}$, $e^{\beta_{j,\varepsilon}}\to 0$ as $\varepsilon\to 0$. Thus, we have
$$(v_{1,j,\varepsilon},v_{2,j,\varepsilon})\to (V_{1,j},V_{2,j})\quad \text{  in  }\quad C^1_{loc}(\mathbb{R}^2), $$ where $(V_{1,j},V_{2,j})$
satisfies
\begin{equation}
\begin{cases}
\Delta V_{1,j}+e^{u_{0,2}(q_j)}e^{V_{2,j}}=0, \\
\Delta  V_{2,j}+e^{u_{0,1}(q_j)}e^{V_{1,j}}=0,\\
\int_{\mathbb{R}^2} e^{u_{0,2}(q_j)}e^{V_{2,j}}=M_{1,j},\,\, \int_{\mathbb{R}^2} e^{u_{0,1}(q_j)}e^{V_{1,j}}=M_{2,j},
\end{cases}
\end{equation}
and
\begin{equation}\label{e106}
 m_{1,j}^*\ge  M_{1,j}\quad \text{  and    }\quad m_{2,j}^*\ge M_{2,j}.
\end{equation}
On the other hand, it was   proved by Chanillo-Kiessling\cite{CK1994} that
  $(M_{1,j},M_{2,j})$ satisfies the following  Pohozaev identity
\begin{equation}\label{Mij}
 M_{1,j}+M_{2,j}=\frac{M_{1,j} M_{2,j}}{4\pi}
 \end{equation} which implies
$M_{1,j}=\frac{4\pi M_{2,j}}{M_{2,j}-4\pi}$ and $M_{2,j}=\frac{4\pi M_{1,j}}{M_{1,j}-4\pi}$. Here we note that by standard potential theory it is easy to prove $M_{1,j},M_{2,j}>4\pi$. Now we write (\ref{Mij}) as
\begin{equation}\label{e104}
(M_{1,j}-8\pi)+(M_{2,j}-8\pi)=\frac{(M_{1,j}-8\pi)^2}{M_{1,j}-4\pi}=\frac{(M_{2,j}-8\pi)^2}{M_{2,j}-4\pi}.
\end{equation}
Since $(u_{1,\varepsilon},u_{2,\varepsilon})$ is fully bubbling, we have
\begin{equation}\label{e105}
\sum_{j=1}^k m_{1,j}^*=8k\pi \text{   and  }\sum_{j=1}^k m^*_{2,j}=8k\pi.
\end{equation}
Combing \eqref{e106}, \eqref{e104} and \eqref{e105}, we have
\begin{equation}
 0 \ge \sum_{i=1}^2\sum_{j=1}^k(M_{i,j}-m_{i,j}^*)
 =  \sum_{i=1}^2\sum_{j=1}^k( M_{i,j}-8\pi)
 = \frac{1}{2}\sum_{i=1}^2\sum_{j=1}^k \frac{ (M_{i,j}-8\pi)^2}{M_{i,j}-4\pi}.
 \end{equation}
We conclude that $M_{i,j}=8\pi$. Furthermore, by \eqref{e106} and \eqref{e105} again, we obtain
$m_{i,j}^*=8\pi$.

\end{proof}

\begin{lem}\label{lem24}
 For $i=1,2$, $j=1,\cdots, k$, and any small $\theta'>0$, there exists a constant $C>0$ such that

\begin{equation}\label{e032}
u_{i,\varepsilon}(x)-u_{i,\varepsilon}(x_{j,\varepsilon})\leq -(4-\theta')\ln\Big\vert 1+ \mu_{j,\varepsilon}  |x-x_{j,\varepsilon}|\Big\vert+C
\end{equation}
for $x\in B_{\delta}(x_{j,\varepsilon})$.
\end{lem}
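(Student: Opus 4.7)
Plan. The estimate is a uniform pointwise upper bound on $u_{i,\varepsilon}$ shaped by the decay of the limiting Liouville bubble. I would split $B_{\delta}(x_{j,\varepsilon})$ into an inner region $\{|x-x_{j,\varepsilon}|\le R_{0}\mu_{j,\varepsilon}^{-1}\}$ and an outer region, with $R_{0}$ large depending on $\theta'$. In the inner region, the rescaling $v_{i,\varepsilon}(y)=u_{i,\varepsilon}(\mu_{j,\varepsilon}^{-1}y+x_{j,\varepsilon})-\beta_{j,\varepsilon}$ introduced in the proof of Lemma \ref{lem8pi} converges in $C^{2}_{\mathrm{loc}}(\mathbb{R}^{2})$ to a solution $(V_{1,j},V_{2,j})$ of the Liouville system with total masses $(M_{1,j},M_{2,j})=(8\pi,8\pi)$. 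Integrating the limiting equations against the fundamental solution of $-\Delta$ yields the sharp asymptotics $V_{i,j}(y)=-4\ln|y|+O(1)$ at infinity, so for any $\theta'>0$ I can choose $R_{0}=R_{0}(\theta')$ large enough that $v_{i,\varepsilon}(y)\le -(4-\theta')\ln(1+|y|)+C$ uniformly on $\{|y|\le 2R_{0}\}$, covering the inner region with a buffer beyond.

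For the outer region $r:=|x-x_{j,\varepsilon}|\in[R_{0}\mu_{j,\varepsilon}^{-1},\delta]$, I would use the Green's representation on $\Omega$:
\[
u_{i,\varepsilon}(x)-u_{i,\varepsilon}(x_{j,\varepsilon})=\int_{\Omega}\bigl[G(x,y)-G(x_{j,\varepsilon},y)\bigr]g_{i,\varepsilon}(y)\,dy,
\]
where $g_{i,\varepsilon}(y):=\tfrac{1}{\varepsilon^{2}}e^{u_{3-i,\varepsilon}+u_{0,3-i}}(1-e^{u_{i,\varepsilon}+u_{0,i}})-\tfrac{4\pi N_{i}}{|\Omega|}$, and split the integration into four pieces: the bubble core $\{|y-x_{j,\varepsilon}|\le R_{0}\mu_{j,\varepsilon}^{-1}\}$, an intermediate annulus up to $r/2$, a self-interaction disk $\{|y-x|\le r/2\}$, and the far complement. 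Taylor expansion of the logarithmic part of $G$ on the core produces the main contribution $-\tfrac{m_{i,j,\varepsilon}}{2\pi}\ln(\mu_{j,\varepsilon}r)+O(1)$, which together with $m_{i,j,\varepsilon}\to 8\pi$ gives nearly the desired decay. The intermediate annulus contributes $o(\ln(\mu_{j,\varepsilon}r))$ by the inner bound, and the far complement is $O(1)$ via Lemma \ref{lem21} and the fully-bubbling assumption.

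The principal obstacle is the self-interaction disk $\{|y-x|\le r/2\}$, where the kernel singularity of $G(x,y)$ demands an a priori bound on $u_{i,\varepsilon}$ precisely in the region the lemma concerns. I would close this by a continuity/bootstrap argument: let $r_{\ast}$ denote the supremum of radii in $[R_{0}\mu_{j,\varepsilon}^{-1},\delta]$ on which the claimed inequality (with constant $C+1$, say) holds throughout $B_{r_{\ast}}(x_{j,\varepsilon})$. The set of admissible $r$ is open by continuity. Inserting the tentative bound for $y\in\{|y-x_{j,\varepsilon}|\le r_{\ast}\}$ into the self-interaction integral, a direct computation using $\tfrac{e^{\beta_{j,\varepsilon}}}{\varepsilon^{2}}=\mu_{j,\varepsilon}^{2}$ shows its size is $O\bigl((\mu_{j,\varepsilon}r)^{\theta'-2}|\ln r|\bigr)=o(1)$, which self-consistently upgrades the estimate to a slightly larger ball. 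The loss $\theta'>0$ is exactly the amount the self-interaction consumes, and this bootstrap forces $r_{\ast}=\delta$, yielding the lemma.
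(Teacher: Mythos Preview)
Your overall strategy---Green's representation and a decomposition into core, annulus, self-interaction disk, and far complement---matches the paper's. The gap is in your bootstrap for the self-interaction piece. First, the source for $u_{i,\varepsilon}$ is $\varepsilon^{-2}e^{u_{3-i,\varepsilon}+u_{0,3-i}}(1-e^{u_{i,\varepsilon}+u_{0,i}})$, so any continuity argument must run simultaneously on both components; a tentative bound on $u_{i,\varepsilon}$ alone does not control its own source. Second, and more seriously, your claimed size $O\bigl((\mu_{j,\varepsilon}r)^{\theta'-2}|\ln r|\bigr)=o(1)$ for the self-interaction is not uniform in $r$: at the inner edge $r\sim R_0\mu_{j,\varepsilon}^{-1}$ it behaves like $R_0^{\theta'-2}\ln\mu_{j,\varepsilon}\to\infty$, so the Green's representation there returns something of order $\ln\mu_{j,\varepsilon}$ while the target bound $-(4-\theta')\ln(1+\mu_{j,\varepsilon}r)+C$ is only $O(1)$. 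The continuity method cannot close. There is also the geometric issue that the disk $\{|y-x|\le r_\ast/2\}$ reaches into $\{|y-x_{j,\varepsilon}|\le 3r_\ast/2\}$, beyond where the tentative bound is assumed.

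The paper bypasses any bootstrap with two a priori facts already in hand. In rescaled variables the source is uniformly bounded,
\[
e^{\tilde u_{3-i,\varepsilon}+\tilde u_{0,3-i}-\beta_{j,\varepsilon}}\bigl(1-e^{\tilde u_{i,\varepsilon}+\tilde u_{0,i}}\bigr)\le C,
\]
simply because $u_{3-i,\varepsilon}\le\beta_{j,\varepsilon}$ on $B_d(q_j)$ by the definition of $\beta_{j,\varepsilon}$ as the local maximum. This handles a \emph{fixed} tiny disk $B_{\sigma/2}(z)$ around the kernel singularity, contributing only $o_\sigma(1)\ln|z|+O_\sigma(1)$. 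On the remaining annulus $B_{|z|/2}(z)\setminus B_{\sigma/2}(z)$ the kernel is bounded by $\ln(3|z|/\sigma)$, and since $B_{|z|/2}(z)$ is disjoint from $B_R(0)$ once $|z|>2R$, the mass there is at most the total mass outside $B_R(0)$, which by Lemma~\ref{lem8pi} is smaller than any prescribed $\theta$ for $R$ large. The resulting term $\theta\ln|z|+O_\sigma(1)$ is absorbed into the core's gain $-(4-\theta/2\pi)\ln|z|$ by choosing $\sigma$ and $\theta$ small relative to $\theta'$. No circularity, no bootstrap.
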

\begin{proof} By symmetry we only prove the case $i=1$. Using the Green's representation formula of $u_{1,\epsilon}$, we have

\begin{equation}
\begin{cases}
u_{1,\varepsilon}(x)-\frac{1}{|\Omega|}\int_{\Omega}u_{1,\varepsilon}=\frac{1}{\varepsilon^2}\int_{\Omega} G(y,x)e^{u_{2,\varepsilon}+u_{0,2}}(1-e^{u_{1,\varepsilon}+u_{0,1}})\\
u_{2,\varepsilon}(x)-\frac{1}{|\Omega|}\int_{\Omega}u_{2,\varepsilon}=\frac{1}{\varepsilon^2}\int_{\Omega} G(y,x)e^{u_{1,\varepsilon}+u_{0,1}}(1-e^{u_{2,\varepsilon}+u_{0,2}})
\end{cases}
\end{equation}
Taking out the regular part of $G$ we have
\begin{equation}\label{e033}
\begin{split}
&u_{1,\varepsilon}(x)-u_{1,\varepsilon}(x_{ j,\varepsilon})\\
=&\frac{1}{ \varepsilon^2}\int_{\Omega} (G(y,x)-G(y,x_{ j,\varepsilon}))e^{u_{2,\varepsilon}+u_{0,2}}(1-e^{u_{1,\varepsilon}+u_{0,1}})\\
=&  \frac{1}{ 2\pi\varepsilon^2} \int_{B_{ \delta}(x_{ j,\varepsilon})} \ln \frac{|y-x_{ j,\varepsilon}|}{|y-x|}e^{u_{2,\varepsilon}+u_{0,2}}(1-e^{u_{1,\varepsilon}+u_{0,1}}) +O(1)
\end{split}
\end{equation}
Let $\tilde{u}_{i,\varepsilon}(z)=u_{i,\varepsilon}(\mu_{j,\varepsilon}^{-1}z+x_{ j,\varepsilon})$, $i=1,2.$ then \eqref{e033} becomes
\begin{equation}\label{e034}
\begin{split}
&\tilde{u}_{1,\varepsilon}(z)-\tilde{u}_{1,\varepsilon}(0)\\
=&  \frac{1}{2\pi} \int_{B_{ \mu_{j,\varepsilon}\delta}(0)} \ln \frac{|y |}{|y-z|}e^{\tilde{u}_{2,\varepsilon}+\tilde{u}_{0,2}-\beta_{j,\varepsilon}}(1-e^{\tilde{u}_{1,\varepsilon}+\tilde{u}_{0,1}}) +O(1).
\end{split}
\end{equation}
where $\tilde u_{0,1}$ is understood similarly.
Let $\theta>0$ be a small constant. By \eqref{e107}, there exists $R>0$ such that
\begin{equation}
 \frac{1}{2\pi} \int_{B_{R}(0)}  e^{\tilde{u}_{2,\varepsilon}+\tilde{u}_{0,2}-\beta_{j,\varepsilon}}(1-e^{\tilde{u}_{1,\varepsilon}+\tilde{u}_{0,1}})\geq 4-\theta .
\end{equation}
Next, we split $B_{ \mu_{j,\varepsilon}\delta}(0)$ into
$$
B_{\frac{|z|}{2}}(0) \cup B_{\frac{|z|}{2}}(z) \cup \Big( B_{ \mu_{j,\varepsilon}\delta}(0)\setminus(  B_{\frac{|z|}{2}}(0) \cup B_{\frac{|z|}{2}}(z)  )   \Big)
$$
For $R\leq |y|\leq \frac{|z|}{2}$, we find $\frac{|y|}{|y-z|}\leq 1$,  and  for $y\in  B_{ \mu_{j,\varepsilon}\delta}(0)\setminus(  B_{\frac{|z|}{2}}(0) \cup B_{\frac{|z|}{2}}(z)  ) $, we find  $| \ln \frac{|y|}{|y-z|}|\leq C$ for some $C>0$.
By these and $ e^{\tilde{u}_{2,\varepsilon}+\tilde{u}_{0,2}-\beta_{j,\varepsilon}}(1-e^{\tilde{u}_{1,\varepsilon}+\tilde{u}_{0,1}})>0$,  we obtain

\begin{equation}\label{e035}
\begin{split}
&\tilde{u}_{1,\varepsilon}(z)-\tilde{u}_{1,\varepsilon}(0)\\
\leq &  \frac{1}{2\pi} \int_{ B_R(0)  } \ln \frac{|y |}{|y-z|}e^{\tilde{u}_{2,\varepsilon}+\tilde{u}_{0,2}-\beta_{j,\varepsilon}}(1-e^{\tilde{u}_{1,\varepsilon}+\tilde{u}_{0,1}})\\
&+ \frac{1}{2\pi} \int_{ B_{\frac{|z|}{2}}(z)  } \ln \frac{|y |}{|y-z|}e^{\tilde{u}_{2,\varepsilon}+\tilde{u}_{0,2}-\beta_{j,\varepsilon}}(1-e^{\tilde{u}_{1,\varepsilon}+\tilde{u}_{0,1}}) +O(1)
\end{split}
\end{equation}

Let $\sigma>0$ be a small constant, then
\begin{equation}\label{e036}
\begin{split}
&   \int_{ B_{\frac{|z|}{2}}(z)  } \ln \frac{|y |}{|y-z|}e^{\tilde{u}_{2,\varepsilon}+\tilde{u}_{0,2}-\beta_{j,\varepsilon}}(1-e^{\tilde{u}_{1,\varepsilon}+\tilde{u}_{0,1}}) \\
\leq  & C  \int_{ B_{\frac{\sigma}{2}}(z)  } \ln \frac{|y |}{|y-z|}  + \ln\frac{{3}|z|}{\sigma} \int_{ B_{\frac{|z|}{2}}(z)  \setminus B_{\frac{\sigma}{2}}(z)  }  e^{\tilde{u}_{2,\varepsilon}+\tilde{u}_{0,2}-\beta_{j,\varepsilon}}(1-e^{\tilde{u}_{1,\varepsilon}+\tilde{u}_{0,1}})\\
=& o_{\sigma}(1)\ln |z|+ \ln\frac{{3}|z|}{\sigma} \int_{ B_{\frac{|z|}{2}}(z)  \setminus B_{\frac{\sigma}{2}}(z)  }  e^{\tilde{u}_{2,\varepsilon}+\tilde{u}_{0,2}-\beta_{j,\varepsilon}}(1-e^{\tilde{u}_{1,\varepsilon}+\tilde{u}_{0,1}}) ,
\end{split}
\end{equation}
where $C $ is a constant.
Furthermore, if $|z|>> R$, then
$$e^{\tilde{u}_{2,\varepsilon}(y)+\tilde{u}_{0,2}(y)-\beta_{j,\varepsilon}}=O(\frac{1}{|z|^{4+o(1)}})\quad \text{ for }\quad   y\in B_{\frac{|z|}{2}}(z) , $$
and 
$$\ln |y-z|=\ln |z|+o_{\frac{1}{|z|}}(1)\quad \text{ for  }\quad y\in B_{R}(0).$$
By these, \eqref{e034}, \eqref{e035} and \eqref{e036}, we conclude that
\begin{equation}
\tilde{u}_{1,\varepsilon}(z)-\tilde{u}_{1,\varepsilon}(0)\leq -(4-\theta )\ln |z|+O(1).
\end{equation}

\end{proof}

By this lemma, we could refine the estimate of \eqref{e032} as follows.

\begin{lem}\label{lem25} For $i=1,2$, and $j=1,\cdots,k$, it holds
\begin{equation}\label{e070}
u_{i,\varepsilon}(x)-u_{i,\varepsilon}(x_{j,\varepsilon})=-\frac{m_{i,j,\varepsilon}}{2\pi}\ln (1+\mu_{j,\varepsilon}|x-x_{j,\varepsilon}|)+O(1)
\end{equation}
for $x\in B_{\delta}(x_{j,\varepsilon})$.
\end{lem}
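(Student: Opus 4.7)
The plan is to bootstrap: the pointwise decay from Lemma \ref{lem24} is exactly what is needed to convert the one-sided bound \eqref{e032} into the sharp two-sided asymptotic by re-evaluating the Green's representation \eqref{e034} with quantitative control on the mass and moments of the source.

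Work in the rescaled variable $z = \mu_{j,\varepsilon}(x-x_{j,\varepsilon})$ with $\tilde u_{i,\varepsilon}(z) = u_{i,\varepsilon}(\mu_{j,\varepsilon}^{-1}z+x_{j,\varepsilon})$, and introduce the positive measure
$d\mu_\varepsilon(y) = e^{\tilde u_{2,\varepsilon}(y)+\tilde u_{0,2}(y)-\beta_{j,\varepsilon}}(1-e^{\tilde u_{1,\varepsilon}(y)+\tilde u_{0,1}(y)})\,dy$,
whose total mass on $B_{\mu_{j,\varepsilon}\delta}(0)$ equals $m_{1,j,\varepsilon}$ by a change of variable. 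By (A2)--(A3) and Lemma \ref{lem24} applied to $u_{2,\varepsilon}$, the density satisfies $d\mu_\varepsilon/dy \leq C(1+|y|)^{-(4-\theta')}$ for any small $\theta'>0$; this is the crucial decay input.

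For $|z|\leq R$ bounded, $C^1_{\mathrm{loc}}$-convergence of $\tilde u_{1,\varepsilon}$ to the Liouville-system solution from the proof of Lemma \ref{lem8pi} already gives $\tilde u_{1,\varepsilon}(z)-\tilde u_{1,\varepsilon}(0) = O(1)$, matching $-\frac{m_{1,j,\varepsilon}}{2\pi}\ln(1+|z|)+O(1)$. For $|z|\geq R$ large, split the domain in \eqref{e034} into $A_1 = B_{|z|/2}(0)$, $A_2 = B_{|z|/2}(z)$, and the remainder $A_3 = B_{\mu_{j,\varepsilon}\delta}(0)\setminus(A_1\cup A_2)$. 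On $A_1$, Taylor expand $\ln|y-z| = \ln|z| + O(|y|/|z|)$; the density bound makes $\int\ln(1+|y|)(1+|y|)^{-(4-\theta')}\,dy$ absolutely convergent (since $4-\theta'>2$), and yields the tail estimate $\mu_\varepsilon(B_{\mu_{j,\varepsilon}\delta}(0)\setminus A_1) = O(|z|^{-(2-\theta')})$, so the $A_1$-contribution is $-\mu_\varepsilon(A_1)\ln|z|+O(1) = -m_{1,j,\varepsilon}\ln|z|+O(1)$. On $A_2$ the density is pointwise bounded by $C|z|^{-(4-\theta')}$ and $\int_{B_{|z|/2}(z)}|\ln(|y|/|y-z|)|\,dy = O(|z|^2\ln|z|)$, so this piece contributes $O(|z|^{\theta'-2}\ln|z|) = o(1)$. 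On $A_3$, a short case analysis ($|y|\leq 2|z|$ vs $|y|>2|z|$) yields $|\ln(|y|/|y-z|)| \leq C$, so this contributes $O(\mu_\varepsilon(A_3)) = O(1)$. Summing the three pieces, multiplying by $1/(2\pi)$, and using $\ln|z|=\ln(1+|z|)+O(1)$ for $|z|\geq R$, we obtain \eqref{e070}; the case $i=2$ is symmetric.

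The main technical point is that the exponent $4-\theta'$ in Lemma \ref{lem24} is only barely above the critical value $2$ needed to ensure absolute convergence of $\int \ln|y|\,d\mu_\varepsilon$ and summability of the tail of $\mu_\varepsilon$; since $\theta'$ may be taken arbitrarily small this suffices, and the bootstrap closes cleanly without any additional input.
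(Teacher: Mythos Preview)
Your proof is correct and follows essentially the same route as the paper: both start from the rescaled Green's representation \eqref{e034}, invoke the decay $e^{\tilde u_{2,\varepsilon}+\tilde u_{0,2}-\beta_{j,\varepsilon}}(1-e^{\tilde u_{1,\varepsilon}+\tilde u_{0,1}})=O((1+|y|)^{\theta'-4})$ from Lemma~\ref{lem24}, and then split the integral over the regions $B_{|z|/2}(0)$, $B_{|z|/2}(z)$, and the complement. The paper in fact states exactly this decomposition and then declares the estimate ``standard'' and omits the details; you have supplied them, and your bounds on each piece are correct (in particular the tail and first-moment bounds needed on $A_1$ hold because $4-\theta'>3$).
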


\begin{proof}
As in  \eqref{e033}, we find
\begin{equation}\label{e037}
\begin{split}
&u_{1,\varepsilon}(x)-u_{1,\varepsilon}(x_{j,\varepsilon})\\
=&\frac{1}{2\pi\varepsilon^2}\int_{B_{\delta}(x_{j,\varepsilon})} \ln\frac{|y-x_{j,\varepsilon}|}{|y-x|}  e^{ {u}_{2,\varepsilon}+ {u}_{0,2} }(1-e^{ {u}_{1,\varepsilon}+ {u}_{0,1}})+O(1)\\
\end{split}
\end{equation}
Set  $y-x_{j,\varepsilon}=z\mu_{j,\varepsilon}^{-1}$ and $\overline{x}_{j,\varepsilon}=\mu_{j,\varepsilon}(x-x_{j,\varepsilon})$. Then \eqref{e037} becomes

\begin{equation}\label{e038}
\begin{split}
& {u}_{1,\varepsilon}(x)- {u}_{1,\varepsilon}(x_{ j,\varepsilon})\\
=&  \frac{1}{2\pi} \int_{ B_{\mu_{j,\varepsilon}\delta}(0)  } \ln \frac{|z |}{|z-\overline{x}_{j,\varepsilon}|}e^{\tilde{u}_{2,\varepsilon}+\tilde{u}_{0,2}-\beta_{j,\varepsilon}}(1-e^{\tilde{u}_{1,\varepsilon}+\tilde{u}_{0,1}})+O(1)\\
=& \frac{1}{2\pi} \int_{ B_{\mu_{j,\varepsilon}\delta}(0)  } \ln \frac{1}{|z-\overline{x}_{j,\varepsilon}|}e^{\tilde{u}_{2,\varepsilon}+\tilde{u}_{0,2}-\beta_{j,\varepsilon}}(1-e^{\tilde{u}_{1,\varepsilon}+\tilde{u}_{0,1}})+O(1).
\end{split}
\end{equation}
where Lemma \ref{lem24} is used.
Since
\begin{equation}\label{add-1}
e^{\tilde u_{2,\epsilon}+\tilde u_{0,2}-\beta_{j,\epsilon}}(1-e^{\tilde u_{1,\epsilon}+\tilde u_{0,1}})=O(1+|z|)^{\theta-4},
\end{equation}
the last term in (\ref{e070}) is
$$-\frac 1{2\pi}\log |\bar x_{j,\epsilon}|+O(1),\quad \mbox{ if }\quad |\bar x_{j,\epsilon}|>1. $$
In fact one just considers
subregions like $|z|<|\bar x_{j,\epsilon}|/2$, $|z-\bar x_{j,\epsilon}|\le |\bar x_{j,\epsilon}|/2$, $|z|>2|\bar x_{j,\epsilon}|$, etc. Because of the fast decaying rate in (\ref{add-1}) the derivation of (\ref{e070}) is standard and is therefore omitted.
\end{proof}

With this lemma, we can further refine Lemma \ref{lem21} as follows. In the next section(see Lemma \ref{lemma53}), we will prove that $ \mu_{1,\varepsilon},\cdots,$ and $\mu_{k,\varepsilon}$ are comparable, so we still use $O(  \mu_{j,\varepsilon}^{-1})$ instead of $O(\sum_{j=1}^k \mu_{j,\varepsilon}^{-1})$.

\begin{lem} \label{lem26} For  $i=1,2$,  we have
\begin{equation}
u_{i,\varepsilon}-\frac{1}{|\Omega|}\int_{\Omega}u_{i,\varepsilon}= \sum_{j=1}^k m_{i,j,\varepsilon}  G(x_{j,\varepsilon},x)+O(\mu_{j,\varepsilon}^{-1+o(1)}) \quad \mbox{in    }C^1(\Omega\setminus\cup_{j=1}^k B_{\theta}(x_{j,\varepsilon})).
\end{equation}
\end{lem}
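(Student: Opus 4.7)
The plan is to refine the expansion of Lemma~\ref{lem21} by combining the Green's representation with the sharp pointwise profile from Lemma~\ref{lem25}. Writing $F_{1,\varepsilon}=\varepsilon^{-2}e^{u_{2,\varepsilon}+u_{0,2}}(1-e^{u_{1,\varepsilon}+u_{0,1}})$ and $F_{2,\varepsilon}$ by switching the roles of $1$ and $2$, the identities
\begin{equation*}
u_{i,\varepsilon}(x)-\frac{1}{|\Omega|}\int_{\Omega}u_{i,\varepsilon}=\int_{\Omega} G(y,x)\,F_{i,\varepsilon}(y)\,dy,\qquad i=1,2,
\end{equation*}
follow from \eqref{e002} together with $\int_{\Omega}G(y,\cdot)\,dy=0$. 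I will split the integration into $\bigcup_{j=1}^{k} B_{\theta}(x_{j,\varepsilon})$ and its complement $\Omega^{\sharp}$, and Taylor expand the kernel around each $x_{j,\varepsilon}$,
\begin{equation*}
G(y,x)=G(x_{j,\varepsilon},x)+\nabla_{y} G(x_{j,\varepsilon},x)\cdot(y-x_{j,\varepsilon})+O(|y-x_{j,\varepsilon}|^{2}),
\end{equation*}
uniformly in $x\in\Omega\setminus\bigcup_{j}B_{\theta}(x_{j,\varepsilon})$, where $G$ and its $y$-derivatives are smooth.

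The zeroth-order term reproduces $m_{i,j,\varepsilon}G(x_{j,\varepsilon},x)$ by the very definition of $m_{i,j,\varepsilon}$. For the first-order correction I rescale $y=x_{j,\varepsilon}+\mu_{j,\varepsilon}^{-1}z$; Lemma~\ref{lem25} supplies the bound $F_{i,\varepsilon}(y)\le C\mu_{j,\varepsilon}^{2}(1+|z|)^{-4+o(1)}$, and hence
\begin{equation*}
\int_{B_{\theta}(x_{j,\varepsilon})}(y-x_{j,\varepsilon})\,F_{i,\varepsilon}(y)\,dy=\mu_{j,\varepsilon}^{-1}\int_{|z|\le\theta\mu_{j,\varepsilon}}z\,(1+|z|)^{-4+o(1)}\,dz=O(\mu_{j,\varepsilon}^{-1+o(1)}),
\end{equation*}
since $|z|(1+|z|)^{-4+o(1)}$ is absolutely integrable on $\mathbb{R}^{2}$. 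The quadratic Taylor remainder similarly contributes only $O(\mu_{j,\varepsilon}^{-2+o(1)})$. For the complement $\Omega^{\sharp}$, Lemma~\ref{lem25} already bounds $F_{i,\varepsilon}$ pointwise by $C\mu_{j,\varepsilon}^{-2+o(1)}$, which yields a contribution of order $\mu_{j,\varepsilon}^{-2+o(1)}$. Summing over $j$ gives the claimed $C^{0}$ expansion.

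The $C^{1}$ statement is obtained by differentiating the Green's representation once in $x$ and running the identical argument with $\nabla_{x} G$ in place of $G$; the only new input is the uniform boundedness of $\nabla_{x}G$ and $D_{y}\nabla_{x}G$ on the diagonal complement, which is immediate from the smoothness of $G$ off the diagonal. The main technical subtlety is the $o(1)$ loss in the exponent of the error; this originates from the $\theta'$ freedom in Lemma~\ref{lem24} and is harmless because the model integrals $\int|z|^{\ell}(1+|z|)^{-4+\theta'}\,dz$ remain finite for $\ell=1,2$ as soon as $\theta'<1$. I also tacitly use that the scales $\mu_{j,\varepsilon}$ at distinct blowup points are comparable, so that a single $\mu_{j,\varepsilon}^{-1+o(1)}$ suffices on the right; this comparability is the content of Lemma~\ref{lemma53} referred to just before the statement.
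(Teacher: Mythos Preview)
Your approach is essentially the paper's: Green's representation, localize to the balls $B_\theta(x_{j,\varepsilon})$, and control the first moment of the nonlinearity using the pointwise profile from Lemma~\ref{lem25}. The paper in fact only uses the first-order bound $|G(y,x)-G(x_{j,\varepsilon},x)|\le C|y-x_{j,\varepsilon}|$ rather than a second-order Taylor expansion, but your extra term is harmless and absorbed into the error.

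There is, however, one genuine gap. You write that on the complement $\Omega^{\sharp}$, ``Lemma~\ref{lem25} already bounds $F_{i,\varepsilon}$ pointwise by $C\mu_{j,\varepsilon}^{-2+o(1)}$.'' But Lemma~\ref{lem25} is stated and proved only for $x\in B_{\delta}(x_{j,\varepsilon})$; it says nothing about $F_{i,\varepsilon}$ outside all of the balls. Away from the blow-up points the \emph{qualitative} assumption~(3) gives $\varepsilon^{-2}e^{u_{i,\varepsilon}}\to 0$, but not the quantitative rate $\mu_{j,\varepsilon}^{-2+o(1)}$ that you need. The paper supplies exactly this missing step at the beginning of its proof: it matches the value of $u_{i,\varepsilon}$ on $\partial B_{\delta}(x_{j,\varepsilon})$ coming from Lemma~\ref{lem25},
\[
u_{i,\varepsilon}(x)=u_{i,\varepsilon}(x_{j,\varepsilon})+\frac{m_{i,j,\varepsilon}}{2\pi}\ln\frac{1}{\mu_{j,\varepsilon}}+O(1),
\]
with the $O(1)$ oscillation away from blow-up provided by Lemma~\ref{lem21},
\[
u_{i,\varepsilon}(x)=\frac{1}{|\Omega|}\int_{\Omega}u_{i,\varepsilon}+O(1)\quad\text{on }\Omega\setminus\bigcup_{j}B_{\theta}(x_{j,\varepsilon}),
\]
to conclude that $u_{i,\varepsilon}(x)=u_{i,\varepsilon}(x_{j,\varepsilon})-(4+o(1))\ln\mu_{j,\varepsilon}+O(1)$ uniformly on the complement, and hence $\varepsilon^{-2}e^{u_{i,\varepsilon}+u_{0,i}}=O(\mu_{j,\varepsilon}^{-2+o(1)})$ there. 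Once you insert this matching argument, the rest of your proof goes through exactly as written.
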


\begin{proof}
By Lemma \ref{lem25}, we know
\begin{equation}\label{e039}
u_{1,\varepsilon}(x)=u_{1,\varepsilon}(x_{j,\varepsilon})+\frac{m_{1,j,\varepsilon}}{2\pi}\ln \frac{1}{\mu_{j,\varepsilon}}+O(1)\quad\text{  on  }\quad \partial B_{\delta}(x_{j,\varepsilon})
\end{equation}
On the other hand, by Lemma \ref{lem21}, we find

\begin{equation}\label{e040}
u_{1,\varepsilon}(x)= \frac{1}{|\Omega|}\int_{\Omega}u_{1,\varepsilon}+O(1)   \quad\text{  on  }\quad  \Omega\setminus \cup_{j=1}^k B_{\theta}(x_{j,\varepsilon} )
\end{equation}
Thus, from \eqref{e039} and \eqref{e040}, we conclude that
\begin{equation}
u_{1,\varepsilon}(x)= u_{1,\varepsilon}(x_{j,\varepsilon}) +\frac{m_{1,j,\varepsilon}}{2\pi}\ln \frac{1}{\mu_{j,\varepsilon}}+O(1)\quad \mbox{ for   }\quad x\in \Omega\setminus\cup_{j=1}^k B_{\theta}(x_{j,\varepsilon} ),
\end{equation}
and similarly,
\begin{equation}
u_{2,\varepsilon}(x)= u_{2,\varepsilon}(x_{j,\varepsilon}) +\frac{m_{2,j,\varepsilon}}{2\pi}\ln \frac{1}{\mu_{j,\varepsilon}}+O(1)\quad \mbox{ for   }\quad x\in \Omega\setminus\cup_{j=1}^k B_{\theta}(x_{j,\varepsilon} ).
\end{equation}

As a  result, for $x\in \Omega\setminus\cup_{j=1}^k B_{\theta}(x_{j,\varepsilon})$ we use (\ref{scal-fac}) to obtain
\begin{equation*}
\begin{split}
&u_{1,\varepsilon}(x)-\frac{1}{|\Omega|}\int_{\Omega}u_{1,\varepsilon}\\
=&\frac{1}{\varepsilon^2}\int_{\Omega} G(y,x) e^{u_{2,\varepsilon}+u_{0,2}}(1-e^{u_{1,\varepsilon}+u_{0,1}})dy \\
=&\sum_{j=1}^k\bigg (\frac{1}{\varepsilon^2}\int_{B_{\delta}(x_{j,\varepsilon})} G(y,x) e^{u_{2,\varepsilon}+u_{0,2}}(1-e^{u_{1,\varepsilon}+u_{0,1}}) +O\left(    \mu_{j,\varepsilon}^{-2+o(1)}   \right) \bigg )\\
=& \sum_{j=1}^k \bigg (m_{1,j,\varepsilon} G(x_{j,\varepsilon},x)+ \frac{1}{\varepsilon^2}\int_{B_{\delta}(x_{j,\varepsilon})} (G(y,x)-G(x_{j,\varepsilon},x)) e^{u_{2,\varepsilon}+u_{0,2}}(1-e^{u_{1,\varepsilon}+u_{0,1}}) \\
&\qquad +O\left(    \mu_{j,\varepsilon}^{-2+o(1)}\right) \bigg )\\
=& \sum_{j=1}^k\bigg ( m_{1,j,\varepsilon} G(x_{j,\varepsilon},x)+ O\left( \frac{1}{\varepsilon^2}\int_{B_{\delta}(x_{j,\varepsilon})} |y-x_{j,\varepsilon}| e^{u_{2,\varepsilon}+u_{0,2}}     \right)+O\left(    \mu_{j,\varepsilon}^{-2+o(1)}\right) \bigg )\\
=& \sum_{j=1}^k\bigg (m_{1,j,\varepsilon} G(x_{j,\varepsilon},x)+O(\mu_{j,\varepsilon}^{-1+o(1)}) \bigg )
\end{split}
\end{equation*}

Similarly, $\nabla u_{1,\epsilon}(x)$ satisfies:
\begin{equation*}
\nabla u_{1,\varepsilon}(x)=-   \sum_{j=1}^k\bigg (m_{1,j,\varepsilon} \nabla G(x_{j,\varepsilon},x)+O(\mu_{j,\varepsilon}^{-1+o(1)})\bigg ) 
\end{equation*}
for $x\in \Omega\setminus\cup_{j=1}^k B_{\theta}(x_{j,\varepsilon})$.

\end{proof}

\section{Sharp estimate ($0<\tau\leq \frac{1}{2}$) }

In this section, we will prove a sharper estimate than \eqref{e057} in Proposition \ref{pro21}. Before we state the result of this section, we introduce the approximation solutions and their asymptotic behavior.
For $j=1,\cdots,k$, we use $(V_{1,j,\varepsilon}, V_{2,j,\varepsilon})$ to denote the global solutions of
\begin{equation}\label{e095}
\begin{cases}
\Delta V_{1,j,\varepsilon}+e^{u_{0,2}(x_{j,\varepsilon})}e^{V_{2,j,\varepsilon}}=0,\qquad  x\in\mathbb{R}^2,\\
\Delta V_{2,j,\varepsilon}+e^{u_{0,1}(x_{j,\varepsilon})}e^{V_{1,j,\varepsilon}}=0,\qquad  x\in\mathbb{R}^2,\\
V_{1,j,\varepsilon}(0)=u_{1,\varepsilon}(x_{j,\varepsilon})-\beta_{j,\varepsilon},\quad V_{2,j,\varepsilon}(0)=u_{2,\varepsilon}(x_{j,\varepsilon})-\beta_{j,\varepsilon}\\
max_{x\in\mathbb{R}^2}V_{1,j,\varepsilon}(x)=V_{1,j,\varepsilon}(0),\\
\tilde{M}_{1,j,\varepsilon}=\int_{\mathbb{R}^2}e^{u_{0,2}(x_{j,\varepsilon})}e^{V_{2,j,\varepsilon}}, \quad
\tilde{M}_{2,j,\varepsilon}=\int_{\mathbb{R}^2}e^{u_{0,1}(x_{j,\varepsilon})}e^{V_{1,j,\varepsilon}}.
\end{cases}
\end{equation}
The system (\ref{e095}) is an irreducible Liouville system whose classification can be found in \cite{CSW} and \cite{linzhang1}. By the classification theorem for
such Liouville systems, $(V_{1,j,\varepsilon}, V_{2,j,\varepsilon})$ is radially symmetric with respect to the origin.\par

For $|x| $ large, it is well-known that

\begin{equation}\label{e058}
\begin{cases}
V_{1,j,\varepsilon}(x)=-\frac{\tilde{M}_{1,j,\varepsilon}}{2\pi}\ln |x|+I_{1,j,\varepsilon}+O(|x|^{2-\frac{\tilde{M}_{2,j,\varepsilon}}{2\pi}}),\\
V_{2,j,\varepsilon}(x)=-\frac{\tilde{M}_{2,j,\varepsilon}}{2\pi}\ln |x|+I_{2,j,\varepsilon}+O(|x|^{2-\frac{\tilde{M}_{1,j,\varepsilon}}{2\pi}}),
\end{cases}
\end{equation}
where $\{I_{i,j,\epsilon}\}$ is a sequence of constants tending to a finite constant $I_{i,j}$ when $\epsilon\to 0$,
and
\begin{equation}\label{e059}
\begin{cases}
V_{1,j,\varepsilon}'(|x|)=-\frac{\tilde{M}_{1,j,\varepsilon}}{2\pi}\frac{1}{|x|}+ O(|x|^{1-\frac{\tilde{M}_{2,j,\varepsilon}}{2\pi}}),\\
V_{2,j,\varepsilon}'(|x|)=-\frac{\tilde{M}_{2,j,\varepsilon}}{2\pi}\frac{1}{|x|}+O(|x|^{1-\frac{\tilde{M}_{1,j,\varepsilon}}{2\pi}}).
\end{cases}
\end{equation}
Let $(\phi_1,\phi_2)$ be solutions of the linearized system of \eqref{e095}.
\begin{equation}
\begin{cases}
\Delta \phi_1+e^{u_{0,2}(x_{j,\varepsilon})+V_{2,j,\varepsilon}}\phi_2=0\\
\Delta \phi_2+e^{u_{0,1}(x_{j,\varepsilon})+V_{1,j,\varepsilon}}\phi_1=0
\end{cases}\quad \text{in}\quad \mathbb{R}^2.
\end{equation}
By Theorem 2.1 of \cite{ZL-JFA}, if  $|\phi_{i}(x)|\leq C(1+|x|)^{\tau}$ for all $x\in\mathbb{R}^2$ for some $\tau\in(0,1)$ and $\phi_1(0)=\phi_2(0)=0$, there exist $c_1$ and $c_2$ such that
\begin{equation}\label{e096}
\begin{pmatrix}
\phi_1\\ \phi_2
\end{pmatrix}
= c_1 \begin{pmatrix}
 V_{1,j,\varepsilon}'(r)\frac{x_1}{r}\\
 V_{2,j,\varepsilon}'(r) \frac{x_1}{r}
\end{pmatrix}+c_2 \begin{pmatrix}
 V_{1,j,\varepsilon}'(r)\frac{x_2}{r}\\
 V_{2,j,\varepsilon}'(r) \frac{x_2}{r}
\end{pmatrix},\quad x\in \mathbb{R}^2;
\end{equation}

If $|\phi_{i}(x)|\leq C$ for all $x\in\mathbb{R}^2$, there exist $c_0$, $c_1$ and $c_2$ such that
\begin{equation}\label{e097}
\begin{pmatrix}
\phi_1\\ \phi_2
\end{pmatrix}
= c_0 \begin{pmatrix}
rV_{1,j,\varepsilon}'(r)+2\\
rV_{2,j,\varepsilon}'(r)+2
\end{pmatrix}
+c_1\frac{x_1}{r} \begin{pmatrix}
 V_{1,j,\varepsilon}'(r)\\
 V_{2,j,\varepsilon}'(r)
\end{pmatrix}+c_2 \frac{x_2}{r}\begin{pmatrix}
 V_{1,j,\varepsilon}'(r)\\
 V_{2,j,\varepsilon}'(r)
\end{pmatrix}
\end{equation}
for $x\in \mathbb{R}^2$.

Without loss of generality, we may assume that $u_{1,\varepsilon}(x_{1,j,\varepsilon})=\beta_{j,\varepsilon}$.
Denote the error term $(\eta_{1,j,\varepsilon},\eta_{2,j,\varepsilon})$ as follows.
\begin{equation}\label{e060}
\begin{cases}
\eta_{1,j,\varepsilon}(x)=&u_{1,\varepsilon}(x)-\beta_{j,\varepsilon}-U_{1,j,\varepsilon}^*(x)- {M}_{1,j,\varepsilon}(\gamma(x,x_{j,\varepsilon})-\gamma(x_{j,\varepsilon},x_{j,\varepsilon}))
\\&-\sum_{l\not= j,1\leq l\leq k} {M}_{1,l,\varepsilon}(G(x,x_{l,\varepsilon})-G(x_{j,\varepsilon},x_{l,\varepsilon}))\\
\eta_{2,j,\varepsilon}(x)=&u_{2,\varepsilon}(x)-\beta_{j,\varepsilon}-U^*_{2,j,\varepsilon}(x)- {M}_{2,j,\varepsilon}(\gamma(x,x_{j,\varepsilon})-\gamma(x_{j,\varepsilon},x_{j,\varepsilon}))
\\&-\sum_{l\not= j,1\leq l\leq k} {M}_{2,l,\varepsilon}(G(x,x_{l,\varepsilon})-G(x_{j,\varepsilon},x_{l,\varepsilon}))\\
\end{cases}
\end{equation}
where $(U_{1,j,\varepsilon}^*(x),U_{2,j,\varepsilon}^*(x))=\left(V_{1,j,\varepsilon}(\mu_{j,\varepsilon}(x-x_{j,\varepsilon}^*)),V_{2,j,\varepsilon}(\mu_{j,\varepsilon}(x-x_{j,\varepsilon}^*))\right)$ and the point $x_{j,\varepsilon}^*$ is chosen to satisfy
\begin{equation}
\begin{split}
D U_{1,j,\varepsilon}^*(x_{j,\varepsilon})= & D u_{0,1}(x_{j,\varepsilon})+  D u_{0,2}(x_{j,\varepsilon})\\
&+  \sum_{l\not = j,,1\leq l\leq k}\big( 16\pi DG( x_{j,\varepsilon},x_{l,\varepsilon})-  {M}_{1,l,\varepsilon} DG(x_{j,\varepsilon},x_{l,\varepsilon}) \big).
\end{split}
\end{equation}

It is not difficult to see that
\begin{equation*}
|x_{j,\varepsilon}-x_{j,\varepsilon}^*|=O(\mu_{j,\varepsilon}^{-2}).
\end{equation*}
Thus, by this and  \eqref{e060}, we obtain
\begin{equation*}
\eta_{i,j,\varepsilon}(x_{j,\varepsilon})=V_{i,j,\varepsilon}(\mu_{j,\varepsilon}(x_{j,\varepsilon}-x_{j,\varepsilon}^*))-V_{i,j,\varepsilon}(0)= O(\mu_{j,\varepsilon}^{-2}), \quad i=1,2.
\end{equation*}
By the choice of $x_{j,\varepsilon}^*$ and Lemma \ref{lem32}, we have
\begin{equation*}
\begin{split}
D \eta_{1,j,\varepsilon}(x_{j,\varepsilon})&= 
Du_{1,\varepsilon}(x_{j,\varepsilon})- D U_{1,j,\varepsilon}^*(x_{j,\varepsilon})- \sum_{l\not = j,,1\leq l\leq k}  {M}_{1,l,\varepsilon} DG(x_{j,\varepsilon},x_{l,\varepsilon})\\
&=-\left(D u_{0,1}(x_{j,\varepsilon})+  D u_{0,2}(x_{j,\varepsilon}) +  \sum_{l\not = j,,1\leq l\leq k} 16\pi DG( x_{j,\varepsilon},x_{l,\varepsilon})\right)\\
&=O(\mu_{j,\varepsilon}^{-2}).
\end{split}
\end{equation*}
For $i=1,2$, $j=1,\cdots,k$, we set 
\begin{equation}\label{fij}
\begin{split}
f_{i,j,\varepsilon}(x)=& u_{0,i}(x)-u_{0,i}(x_{j,\varepsilon})+{M}_{i,j,\varepsilon}(\gamma(x,x_{j,\varepsilon})-\gamma(x_{j,\varepsilon},x_{j,\varepsilon}))\\
& +\sum_{l\not= j,1\leq l\leq k} {M}_{i,l,\varepsilon}(G(x,x_{l,\varepsilon})-G(x_{j,\varepsilon},x_{l,\varepsilon})) 
 \end{split}
\end{equation}
and 
\begin{equation}\label{fije}
H_{i,j,\varepsilon}(x,t)=e^{ f_{i,j,\varepsilon}(x)+t}-1. 
\end{equation}

For any function $\xi(x)$, we define
$$\tilde{\xi}(x)=\xi(\mu_{j,\varepsilon}^{-1}x+x_{j,\varepsilon}). $$
Thus, $(\tilde{\eta}_{1,j,\varepsilon} (x),\tilde{\eta}_{2,j,\varepsilon} (x))$ satisfies
\begin{equation}\label{e008}
\begin{cases}
\Delta  \tilde{\eta}_{1,j,\varepsilon}(x)+e^{{u}_{0,2}(x_{j,\varepsilon})+\tilde{U}_{2,j,\varepsilon}} \tilde{\eta}_{2,j,\varepsilon}=g_{1,j,\varepsilon}\\
\Delta  \tilde{\eta}_{2,j,\varepsilon}(x)+ e^{{u}_{0,1}(x_{j,\varepsilon})+\tilde{U}_{1,j,\varepsilon}  }\tilde{\eta}_{1,j,\varepsilon}=g_{2,j,\varepsilon} \\
\tilde{\eta}_{i,j,\varepsilon}(0)=O(\mu_{j,\varepsilon}^{-2}),\,i=1,2,\,\,\, D\tilde{\eta}_{1,j,\varepsilon}(0)=O(\mu_{j,\varepsilon}^{-2}).
\end{cases}
\end{equation}
where $g_{1,j,\varepsilon}$ and $g_{2,j,\varepsilon}$  are
\begin{equation}\label{e110}
\begin{cases}
g_{1,j,\varepsilon}= e^{-\beta_{j,\varepsilon}}e^{\sum_{i=1}^2 (\tilde{u}_{i,\varepsilon} +\tilde{u}_{i,0})}+e^{u_{0,2}(x_{j,\varepsilon})}(e^{ \tilde{U}_{2,j,\varepsilon}  }-e^{ \tilde{U}^*_{2,j,\varepsilon}  } )\tilde{\eta}_{2,j,\varepsilon}    \\
 \qquad\quad -e^{\tilde{U}^*_{2,j,\varepsilon} + {u}_{0,2}(x_{j,\varepsilon})}( \tilde{H}_{2,j,\varepsilon}(x,\tilde{\eta}_{2,j,\varepsilon})-\tilde{\eta}_{2,j,\varepsilon}) \\
g_{2,j,\varepsilon}= e^{-\beta_{j,\varepsilon}}e^{\sum_{i=1}^2 (\tilde{u}_{i,\varepsilon} +\tilde{u}_{i,0})}+  e^{u_{0,1}(x_{j,\varepsilon})}(e^{ \tilde{U}_{1,j,\varepsilon}  }-e^{ \tilde{U}^*_{1,j,\varepsilon}  } )\tilde{\eta}_{1,j,\varepsilon}   \\
\qquad\quad  -e^{\tilde{U}^*_{1,j,\varepsilon} + {u}_{0,1}(x_{j,\varepsilon})} (\tilde{H}_{1,j,\varepsilon}(x,\tilde{\eta}_{1,j,\varepsilon})-\tilde{\eta}_{1,j,\varepsilon})
\end{cases},
\end{equation}
and $(   \tilde{U}_{1,j,\varepsilon}(x), \tilde{U}_{2,j,\varepsilon}(x) )=(V_{1,j,\varepsilon} (x),V_{2,j,\varepsilon}(x ))$.

By Proposition \ref{pro21},  we find that
\begin{equation}\label{e005}
e^{-\beta_{j,\varepsilon}}e^{\sum_{i=1}^2 (\tilde{u}_{i,\varepsilon} +\tilde{u}_{i,0})}=O\left(\frac{e^{\beta_{j,\varepsilon}}}{(1+|x|)^{8+o(1)}}\right)
\mbox{    for    }|x|\leq \mu_{j,\varepsilon}\delta
\end{equation}

For $\tau>0$ and $\tau'>0$, we set
\begin{equation*}
N_{\varepsilon}=\sup_{|x|\leq \mu_{j,\varepsilon}\delta}\max\left\{\frac{|\tilde{\eta}_{1,j,\varepsilon}|}{\alpha_{\varepsilon}(1+|x|)^{\tau}} ,\frac{|\tilde{\eta}_{2,j,\varepsilon}|}{\alpha_{\varepsilon}(1+|x|)^{\tau}} \right\},
\end{equation*}
where $\alpha_{\varepsilon}=\varepsilon^{2\tau}e^{-\tau\beta_{j,\varepsilon}}+e^{\tau'\beta_{j,\epsilon}}=\mu_{j,\varepsilon}^{-2\tau}+e^{\tau'\beta_{j,\varepsilon}}$.  

The main goal in this section is to show that    $N_{\varepsilon}$ is bounded for $0<\tau\leq \frac{1}{2}$ and
$0<\tau'<1$, and in the next section we will improve the boundedness of $N_{\varepsilon}$  for $0<\tau<1$ and
$0<\tau'<1$.

\begin{prop}\label{pro31}
\begin{equation}\label{e004}
N_{\varepsilon}\leq C
\end{equation}
 for some constant C, $0<\tau\leq \frac{1}{2}$ and $0<\tau'<1$.
\end{prop}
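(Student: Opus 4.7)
The plan is to argue by contradiction combined with a blow-up analysis of the error after rescaling by $(N_\varepsilon\alpha_\varepsilon)^{-1}$. Assume along a subsequence that $N_\varepsilon\to\infty$, and set $\hat\eta_{i,\varepsilon}(x) := \tilde\eta_{i,j,\varepsilon}(x)/(N_\varepsilon\alpha_\varepsilon)$ for $i=1,2$. By construction $|\hat\eta_{i,\varepsilon}(x)|\le (1+|x|)^\tau$ on $B_{\mu_{j,\varepsilon}\delta}$, with the supremum essentially attained at some point $x_\varepsilon$. The origin data in \eqref{e008} give $\hat\eta_{i,\varepsilon}(0) = O(\mu_{j,\varepsilon}^{-2}/\alpha_\varepsilon) = O(\mu_{j,\varepsilon}^{-2+2\tau})$ and $D\hat\eta_{1,\varepsilon}(0) = O(\mu_{j,\varepsilon}^{-2+2\tau})$; both tend to $0$ since $\tau \le 1/2$.

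The second step is to show that, after dividing \eqref{e008} by $N_\varepsilon\alpha_\varepsilon$, the inhomogeneity becomes negligible. By \eqref{e005}, the double-exponential contribution is $O(e^{\beta_{j,\varepsilon}}(1+|x|)^{-8+o(1)})$, which divided by $N_\varepsilon e^{\tau'\beta_{j,\varepsilon}}$ with $\tau'<1$ gives $O(e^{(1-\tau')\beta_{j,\varepsilon}}/N_\varepsilon)(1+|x|)^{-8+o(1)} \to 0$. The shift term $(e^{\tilde U_{i,j,\varepsilon}}-e^{\tilde U^*_{i,j,\varepsilon}})\tilde\eta_{i,j,\varepsilon}$ is $O(\mu_{j,\varepsilon}^{-1}(1+|x|)^{-4})\cdot\tilde\eta_{i,j,\varepsilon}$ and contributes $O(\mu_{j,\varepsilon}^{-1}(1+|x|)^{\tau-4})$. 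Taylor-expanding the $H$-type nonlinearity as $e^{\tilde U^*_{i,j,\varepsilon}}[\tilde f_{i,j,\varepsilon} + O((\tilde f_{i,j,\varepsilon}+\tilde\eta_{i,j,\varepsilon})^2)]$ with $\tilde f_{i,j,\varepsilon} = O(|x|/\mu_{j,\varepsilon})$, the linear-in-$\tilde f$ part yields $O(\mu_{j,\varepsilon}^{-1}(1+|x|)^{-3})$, while the quadratic parts contribute $O(N_\varepsilon\alpha_\varepsilon(1+|x|)^{2\tau-4})+O(\mu_{j,\varepsilon}^{-2}(1+|x|)^{-2})$. The hypothesis $\tau\le 1/2$ ensures $2\tau-4\le -3$, so these errors stay well below $(1+|x|)^\tau$ (with a crude prior bound $|\tilde\eta_{i,j,\varepsilon}|=O(\ln\mu_{j,\varepsilon})$ from Lemma \ref{lem26} guaranteeing $N_\varepsilon\alpha_\varepsilon\to 0$ as needed).

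In the interior case $|x_\varepsilon|=O(1)$, elliptic regularity yields a $C^1_{\rm loc}$ subsequential limit $(\hat\phi_1,\hat\phi_2)$ on $\mathbb R^2$ solving the homogeneous linearized Liouville system at the entire solution $(V_{1,j},V_{2,j})$, with $|\hat\phi_i|\le C(1+|x|)^\tau$, $\hat\phi_i(0)=0$ and $D\hat\phi_1(0)=0$. Since $0<\tau<1$, formula \eqref{e096} (Theorem 2.1 of \cite{ZL-JFA}) forces $(\hat\phi_1,\hat\phi_2) = c_1(x_1/r)(V_{1,j}',V_{2,j}') + c_2(x_2/r)(V_{1,j}',V_{2,j}')$; evaluating $D\hat\phi_1$ at the origin, where $V_{1,j}''(0)\ne 0$ by the strict radial maximum at $0$, gives $c_1=c_2=0$, so $\hat\phi_i\equiv 0$. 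This contradicts the attainment of the normalization at $x_\varepsilon$.

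The main obstacle is the boundary case $|x_\varepsilon|\to\infty$. My plan there is a barrier/maximum-principle argument on the annulus $\{R\le |x|\le \mu_{j,\varepsilon}\delta\}$. On $\partial B_R$ for large fixed $R$, the interior analysis gives $|\hat\eta_{i,\varepsilon}|=o(1)$. On $\partial B_{\mu_{j,\varepsilon}\delta}$, matching the expansion \eqref{e058} of $V_{i,j,\varepsilon}$ against Lemma \ref{lem26} gives $|\tilde\eta_{i,j,\varepsilon}| = O(\mu_{j,\varepsilon}^{-1+o(1)}) + O(|m_{i,j,\varepsilon}-\tilde M_{i,j,\varepsilon}|\ln\mu_{j,\varepsilon})$, which is $o((\mu_{j,\varepsilon}\delta)^\tau)$ for any $\tau>0$. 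Since the coefficients $e^{\tilde U_{i,j,\varepsilon}}$ decay like $|x|^{-4}$ for large $|x|$, the function $A(1+|x|)^\tau$ can be adjusted into a supersolution for the cooperative linearized system on the annulus, up to a negligible error; a comparison argument applied to $|\hat\eta_{1,\varepsilon}|+|\hat\eta_{2,\varepsilon}|$ then rules out attainment of the normalized supremum in the exterior. This exterior step is the main technical hurdle, since the coefficient decay, the logarithmic mismatch between $u_{i,\varepsilon}$ and $V_{i,j,\varepsilon}$ near the outer boundary, and the barrier must all be balanced; the restriction $\tau\le 1/2$ provides exactly the slack required to close these estimates and to kill the kernel of the limiting linearized operator at the origin.
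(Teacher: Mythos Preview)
Your interior argument is fine and is essentially how the paper closes that case after its reduction. The gap is in the exterior step.

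\medskip

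\textbf{The boundary estimate is wrong.} Matching Lemma~\ref{lem26} with the asymptotics \eqref{e058} on $\partial B_{\mu_{j,\varepsilon}\delta}$ does \emph{not} give $|\tilde\eta_{i,j,\varepsilon}|=O(\mu_{j,\varepsilon}^{-1+o(1)})+O(|m_{i,j,\varepsilon}-\tilde M_{i,j,\varepsilon}|\ln\mu_{j,\varepsilon})$. Writing things out, the radial constant
\[
K_\varepsilon=\frac{1}{|\Omega|}\int_\Omega u_{i,\varepsilon}-\beta_{j,\varepsilon}+\frac{\tilde M_{i,j,\varepsilon}}{2\pi}\ln\mu_{j,\varepsilon}-I_{i,j,\varepsilon}+M_{i,j,\varepsilon}\gamma(x_{j,\varepsilon},x_{j,\varepsilon})+\sum_{l\neq j}M_{i,l,\varepsilon}G(x_{j,\varepsilon},x_{l,\varepsilon})
\]
survives in $\tilde\eta_{i,j,\varepsilon}$ on the outer boundary. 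At this point in the paper you only know $K_\varepsilon=O(1)$ (and possibly $O(1)+o(1)\ln\mu_{j,\varepsilon}$ from the $\tilde M-m$ mismatch); the identity $K_\varepsilon=o(1)$ is exactly Lemma~\ref{lemma53}, which is proved \emph{using} Proposition~\ref{pro31}. So you cannot invoke it here. With only $|\tilde\eta_{i,j,\varepsilon}|=O(1)$ on $\partial B_{\mu_{j,\varepsilon}\delta}$, your barrier would need $O(1)/(N_\varepsilon\alpha_\varepsilon)\le A(\mu_{j,\varepsilon}\delta)^\tau$ with $A<1$; since $\alpha_\varepsilon(\mu_{j,\varepsilon}\delta)^\tau\ge \mu_{j,\varepsilon}^{-\tau}\delta^\tau$, this forces $N_\varepsilon\gtrsim \mu_{j,\varepsilon}^{\tau}$, which you have no way to ensure. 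In addition, $(1+|x|)^\tau$ is not a supersolution for the off-diagonally coupled linearized operator on the annulus (for large $|x|$ one has $\Delta(|x|^\tau)=\tau^2|x|^{\tau-2}>0$ and the coupling term is positive), so the comparison step is not justified as written.

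\medskip

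\textbf{What the paper does instead.} The paper first removes the angular oscillation (Lemma~\ref{le32}): it looks at $\omega^*_{i,j,\varepsilon}(x)=\tilde\eta_{i,j,\varepsilon}(x)-\tilde\eta_{i,j,\varepsilon}(x^-)$, in which the constant $K_\varepsilon$ and the radial $\ln$ terms cancel exactly. This yields a genuine small bound $|\omega^*_{i,j,\varepsilon}|\le C(\mu_{j,\varepsilon}^{-1+o(1)}+\mu_{j,\varepsilon}^{-2+o(1)}|x_2|)$ near the outer boundary, and then a pointwise maximum-principle computation (not a barrier) at the normalized maximum gives $N^*_\varepsilon=o(N_\varepsilon)$. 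Once the problem is reduced to the radial averages $\varphi_{i,j,\varepsilon}$, no boundary control is needed at all: one integrates the ODE to bound $r\varphi_{i,j,\varepsilon}'(r)$ by $C\alpha_\varepsilon N_\varepsilon+O(\mu_{j,\varepsilon}^{-2}\ln r)+\cdots$ and compares $\alpha_\varepsilon N_\varepsilon r_\varepsilon^\tau=|\varphi(r_\varepsilon)|$ to $\int_{r_0}^{r_\varepsilon}|\varphi'|$, which is only logarithmic in $r_\varepsilon$ and can be absorbed against $r_\varepsilon^\tau$. The radial/angular splitting is precisely what buys you the outer-boundary cancellation that your direct approach lacks.
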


We will prove \eqref{e004} by contradiction. Thus, we suppose that
\begin{equation*}
N_{\varepsilon}\to +\infty.
\end{equation*}
Set
$$
N^*_{\varepsilon}=\sup_{|x|\leq \mu_{j,\varepsilon}\delta}\sup_{|x|=|x'|}\max\left\{\frac{|\tilde{\eta}_{1,j,\varepsilon}(x)-\tilde{\eta}_{1,j,\varepsilon}(x')|}{\alpha_{\varepsilon}(1+|x|)^{\tau}} ,\frac{|\tilde{\eta}_{2,j,\varepsilon}(x)-\tilde{\eta}_{2,j,\varepsilon}(x')|}{\alpha_{\varepsilon}(1+|x|)^{\tau}} \right\},
$$
For simplicity, we set $x_{j,\varepsilon}=x_{\varepsilon}$. In the following lemma, we  show that the radial part is a dominant term.
\begin{lem}\label{le32} It holds
$$N_{\varepsilon}^*=o(N_{\varepsilon}).$$
\end{lem}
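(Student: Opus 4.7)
My plan is to argue by contradiction. Suppose $N_{\varepsilon}^{*}/N_{\varepsilon} \not\to 0$, so $N_{\varepsilon}^{*}\geq c_0 N_{\varepsilon}$ for some $c_0>0$ along a subsequence. Normalize
\[
\psi_{i}^{\varepsilon}(x)=\frac{\tilde{\eta}_{i,j,\varepsilon}(x)}{N_{\varepsilon}\alpha_{\varepsilon}},\qquad i=1,2,
\]
so that $|\psi_{i}^{\varepsilon}(x)|\leq (1+|x|)^{\tau}$ on $B_{\mu_{j,\varepsilon}\delta}$. Since $\alpha_{\varepsilon}\geq \mu_{j,\varepsilon}^{-1}$ for $\tau\leq 1/2$ and we are inside the contradiction proof of Proposition \ref{pro31} where $N_{\varepsilon}\to\infty$, the initial conditions in \eqref{e008} give $\psi_{i}^{\varepsilon}(0)=o(1)$ and $D\psi_{1}^{\varepsilon}(0)=o(1)$. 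By definition of $N_{\varepsilon}^{*}$, there exist $x_{\varepsilon}, x_{\varepsilon}'$ with $|x_{\varepsilon}|=|x_{\varepsilon}'|=:r_{\varepsilon}$ realizing $|\psi_{i}^{\varepsilon}(x_{\varepsilon})-\psi_{i}^{\varepsilon}(x_{\varepsilon}')|\geq (c_{0}/2)(1+r_{\varepsilon})^{\tau}$ for some $i$.

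The plan is then to show that $g_{i,j,\varepsilon}/(N_{\varepsilon}\alpha_{\varepsilon})$, with $g_{i,j,\varepsilon}$ as in \eqref{e110}, is negligible locally (and in a suitable decaying weighted norm). The first piece is $O(e^{\beta_{j,\varepsilon}}(1+|x|)^{-8+o(1)})$ by \eqref{e005}; the second uses $|x_{j,\varepsilon}^{*}-x_{j,\varepsilon}|=O(\mu_{j,\varepsilon}^{-2})$ together with the $C^{1}$-bound on $V_{i,j,\varepsilon}$ to gain a factor $O(\mu_{j,\varepsilon}^{-1})$; the third expands
\[
\tilde{H}_{i,j,\varepsilon}(x,\tilde{\eta}_{i,j,\varepsilon})-\tilde{\eta}_{i,j,\varepsilon} = (e^{f_{i,j,\varepsilon}}-1)(1+\tilde{\eta}_{i,j,\varepsilon})+O(e^{f_{i,j,\varepsilon}}\tilde{\eta}_{i,j,\varepsilon}^{2}),
\]
and combines the Taylor bound $f_{i,j,\varepsilon}(\mu_{j,\varepsilon}^{-1}x+x_{j,\varepsilon})=O(\mu_{j,\varepsilon}^{-1}|x|+\mu_{j,\varepsilon}^{-2}|x|^{2})$ with $|\tilde{\eta}_{i,j,\varepsilon}|\leq N_{\varepsilon}\alpha_{\varepsilon}(1+|x|)^{\tau}$. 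Because $\alpha_{\varepsilon}\geq \mu_{j,\varepsilon}^{-1}$, each contribution becomes $o(1)$ after dividing by $N_{\varepsilon}\alpha_{\varepsilon}$.

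By elliptic compactness I extract a subsequential limit $\psi_{i}^{\varepsilon}\to\phi_{i}$ in $C^{1}_{\mathrm{loc}}(\mathbb{R}^{2})$, with $(\phi_{1},\phi_{2})$ solving the linearized Liouville system
\[
\Delta\phi_{1}+e^{u_{0,2}(q_{j})+V_{2,j}}\phi_{2}=0,\qquad \Delta\phi_{2}+e^{u_{0,1}(q_{j})+V_{1,j}}\phi_{1}=0\quad\text{in }\mathbb{R}^{2},
\]
satisfying $|\phi_{i}(x)|\leq C(1+|x|)^{\tau}$ with $\tau\leq 1/2<1$, $\phi_{i}(0)=0$, and $D\phi_{1}(0)=0$. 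The classification \eqref{e096} then forces
\[
(\phi_{1},\phi_{2})=c_{1}\Bigl(V_{1,j}'(r)\tfrac{x_{1}}{r},V_{2,j}'(r)\tfrac{x_{1}}{r}\Bigr)+c_{2}\Bigl(V_{1,j}'(r)\tfrac{x_{2}}{r},V_{2,j}'(r)\tfrac{x_{2}}{r}\Bigr).
\]
Since $V_{1,j}$ attains a nondegenerate radial maximum at $0$, $V_{1,j}'(r)/r\to V_{1,j}''(0)\neq 0$, so $D\phi_{1}(0)=0$ yields $c_{1}=c_{2}=0$ and $\phi_{i}\equiv 0$.

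It remains to rule out escape of the non-radial oscillation to infinity. If $r_{\varepsilon}$ stays bounded, local $C^{1}$ convergence immediately gives $|\phi_{i}(x_{0})-\phi_{i}(x_{0}')|\geq (c_{0}/2)(1+|x_{0}|)^{\tau}>0$, contradicting $\phi_{i}\equiv 0$. If $r_{\varepsilon}\to\infty$, rescale $\hat{\psi}_{i}^{\varepsilon}(y)=\psi_{i}^{\varepsilon}(r_{\varepsilon}y)/r_{\varepsilon}^{\tau}$; this is locally uniformly bounded by $(1/r_{\varepsilon}+|y|)^{\tau}$, and since $e^{V_{i,j}(r_{\varepsilon}y)}=O(r_{\varepsilon}^{-8}|y|^{-8})$ the zeroth-order contribution to the equation vanishes in the limit. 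The limit $\hat{\phi}_{i}$ is harmonic on $\mathbb{R}^{2}\setminus\{0\}$ with sublinear growth and $\hat{\phi}_{i}(0)=0$ (coming from $\psi_{i}^{\varepsilon}(0)=o(1)$), hence extends to a harmonic function on $\mathbb{R}^{2}$ equal to a constant, necessarily zero. The preserved oscillation $|\hat{\phi}_{i}(y_{0})-\hat{\phi}_{i}(y_{0}')|\geq c_{0}/2$ with $|y_{0}|=|y_{0}'|=1$ contradicts this. The main technical obstacle I anticipate is the uniform estimate of $g_{i,j,\varepsilon}/(N_{\varepsilon}\alpha_{\varepsilon})$, where the quadratic expansion of $\tilde{H}_{i,j,\varepsilon}$ and the interplay between $\alpha_{\varepsilon}\geq \mu_{j,\varepsilon}^{-1}$ and the Taylor growth of $f_{i,j,\varepsilon}$ must be tracked carefully.
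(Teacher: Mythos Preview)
Your approach is genuinely different from the paper's. The paper antisymmetrizes, setting $\omega_{i}^{*}(x)=\tilde\eta_{i}(x)-\tilde\eta_{i}(x^{-})$ (with $x^{-}$ the reflection across a line through the origin), and then studies the weighted function $\omega_{i}=\omega_{i}^{*}/(\alpha_{\varepsilon}(1+x_{2})^{\tau})$ by a pointwise maximum-principle argument: at the maximum point $x_{\varepsilon}^{**}$ one evaluates the elliptic inequality and reads off that $|x_{\varepsilon}^{**}|$ must stay bounded (their Step~2), after first using the outer $C^{1}$ expansion from Lemma~\ref{lem26} to exclude the boundary zone $|x_{\varepsilon}^{**}|\in[\tfrac12\mu_{j,\varepsilon}\delta,\mu_{j,\varepsilon}\delta]$ (their Step~1). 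You instead keep the full $\tilde\eta_{i}$, pass to the limit, invoke the kernel classification \eqref{e096} together with $\phi_{i}(0)=0$, $D\phi_{1}(0)=0$, and treat escape to infinity by a second rescaling plus harmonic Liouville. This is a clean alternative and, where it applies, avoids the antisymmetrization bookkeeping.

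There is, however, a real gap in the escape-to-infinity step. Your rescaling $\hat\psi_{i}^{\varepsilon}(y)=\psi_{i}^{\varepsilon}(r_{\varepsilon}y)/r_{\varepsilon}^{\tau}$ lives on $|y|\le \mu_{j,\varepsilon}\delta/r_{\varepsilon}$, and the Liouville conclusion requires this domain to exhaust $\mathbb R^{2}$, i.e.\ $r_{\varepsilon}=o(\mu_{j,\varepsilon})$. If instead $r_{\varepsilon}$ is comparable to $\mu_{j,\varepsilon}\delta$, the rescaled limit is only harmonic on a bounded punctured ball; after removing the singularity you know $\hat\phi_{i}(0)=0$ but have no boundary information, so you cannot conclude $\hat\phi_{i}\equiv 0$, and in fact $D\hat\psi_{1}^{\varepsilon}(0)=r_{\varepsilon}^{1-\tau}D\psi_{1}^{\varepsilon}(0)$ need not vanish in this regime either. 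This boundary case must be ruled out separately. The paper does exactly this in its Step~1: using Lemma~\ref{lem26} one checks that on $|x|\sim \mu_{j,\varepsilon}\delta$ the oscillation of $\tilde\eta_{i,j,\varepsilon}$ is $O(\mu_{j,\varepsilon}^{-1+o(1)})$ (plus lower order), and since $\alpha_{\varepsilon}(1+|x|)^{\tau}\gtrsim \mu_{j,\varepsilon}^{-\tau}$ there, the normalized oscillation is $O(\mu_{j,\varepsilon}^{-1+\tau+o(1)})/N_{\varepsilon}=o(1)$ for $\tau\le\tfrac12$, contradicting the assumed lower bound $c_{0}$. You should insert this outer estimate before your rescaling argument; once $r_{\varepsilon}=o(\mu_{j,\varepsilon})$ is secured, the rest of your plan goes through.
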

\begin{proof}
Suppose it is not true, then there exists $c_0>0$ such that  $N_{\varepsilon}^*\geq c_0 N_{\varepsilon}$. We may
assume there exist  $x_{\varepsilon}'$ and $x_{\varepsilon}''$ with $|x_{\varepsilon}'|=|x_{\varepsilon}''|$ such that
$$ N_{\varepsilon}^*= \frac{|\tilde{\eta}_{1,j,\varepsilon}(x_{\varepsilon}')-\tilde{\eta}_{1,j,\varepsilon}(x_{\varepsilon}'')|}{\alpha_{\varepsilon}(1+|x'_{\varepsilon}|)^{\tau}}  .$$
We   also assume that $x_{\varepsilon}'$ and $x_{\varepsilon}''$  are symmetric with respect to $x_1$-axis. Set
$$\omega_{i,j,\varepsilon}^*(x)=\tilde{\eta}_{i,j,\varepsilon}(x)-\tilde{\eta}_{i,j,\varepsilon}(x^-) , \,\, x^-=(x_1,-x_2),\,\,x_2>0,\, $$
and $$\omega_{i,j,\varepsilon}(x)=\frac{\omega_{i,j,\varepsilon}^*(x)}{\alpha_{\varepsilon}(1+x_2)^{\tau}}, \,\,i=1,2.$$
Let $x_{\varepsilon}^{**}$ satisfy
$$
 \omega_{1,j,\varepsilon}(x_{\varepsilon}^{**}  ) =N_{\varepsilon}^{**}=
\sup_{|x|\leq \mu_{j,\varepsilon}\delta,\, x_2>0} \max\left\{\frac{| {\omega}_{1,j,\varepsilon}^*(x)|}{\alpha_{\varepsilon}(1+x_2)^{\tau}} ,\frac{| {\omega}^*_{2,j,\varepsilon}(x)|}{\alpha_{\varepsilon}(1+x_2)^{\tau}} \right\}
$$
It is clear that $N_{\varepsilon}^{**}\geq N_{\varepsilon}^*$. We divide the proof of this lemma into the following steps.\\
\\
{\it Step 1.} We claim $|x_{\varepsilon}^{**}|\leq \frac{1}{2}\mu_{j,\varepsilon}\delta$.\\
We will prove this claim by contradiction.
By Lemma \ref{lem26}, we find that
$$
\begin{cases}
v_{1,j,\varepsilon}(x):=u_{1,\varepsilon}(x)-\frac{1}{|\Omega|}\int_{\Omega}u_{1,\varepsilon}-\sum_{l=1}^k m_{1,l,\varepsilon}G(x,x_{l,\varepsilon})=O(\mu_{j,\varepsilon}^{-1+o(1)})\\
v_{2,j,\varepsilon}(x):=u_{2,\varepsilon}(x)-\frac{1}{|\Omega|}\int_{\Omega}u_{2,\varepsilon}-\sum_{l=1}^k m_{2,l,\varepsilon}G(x,x_{l,\varepsilon})=O(\mu_{j,\varepsilon}^{-1+o(1)})
\end{cases}
$$
in $C^1( \Omega\setminus \cup_{l=1}^k B_{\delta}(x_{l,\varepsilon}))$.
Thus,
\begin{equation*}
\begin{split}
|\omega_{1,j,\varepsilon}^*(x_{\varepsilon}^{**})|=&|\tilde{\eta}_{1,j,\varepsilon}(x_{\varepsilon}^{**})-\tilde{\eta}_{1,j,\varepsilon}(x_{\varepsilon}^{**^-})|\\
\leq &| v_{1,j,\varepsilon}(\mu_{j,\varepsilon}^{-1}x_{\varepsilon}^{**}+x_{\varepsilon})- v_{1,j,\varepsilon}(\mu_{j,\varepsilon}^{-1}x_{\varepsilon}^{**^-}+x_{\varepsilon})  |+ O(\mu_{j,\varepsilon}^{-2+o(1)})\\
&+ | U^*_{1,j,\varepsilon}(\mu_{j,\varepsilon}^{-1}x_{\varepsilon}^{**} +x_{ \varepsilon})- U^*_{1,\varepsilon}(\mu_{j,\varepsilon}^{-1}x_{\varepsilon}^{**^-} +x_{\varepsilon})  | \\
\leq &C \left(  \mu_{j,\epsilon}^{-1+o(1)}+\mu_{j,\epsilon}^{-2+o(1)}|x_{\epsilon,2}^{**}|\right).
\end{split}
\end{equation*}
Since we assume $\frac{1}{2}\mu_{j,\varepsilon}\delta<|x_{\varepsilon}^{**}|\leq \mu_{j,\varepsilon}\delta$
and $0<\tau\leq\frac{1}{2}$, we obtain
\begin{equation}
\begin{split}
N_{\varepsilon}^{**}\leq  &\frac{\mu_{j,\varepsilon}^{-2+o(1)}|x_{\varepsilon,2}^{**}|^{1-\tau}}{\alpha_{\varepsilon}} +
\frac{\mu_{j,\varepsilon}^{-1+o(1)}|x_{\varepsilon,2}^{**}|^{ -\tau}}{\alpha_{\varepsilon}}\\
\leq& C\left(\frac{\mu_{j,\varepsilon}^{-2+o(1)}\mu_{j,\varepsilon}^{1-\tau}}{\alpha_{\varepsilon}} +
\frac{\mu_{j,\varepsilon}^{-1+o(1)}\mu_{j,\varepsilon}^{ -\tau}}{\alpha_{\varepsilon}}\right)=o(1),
\end{split}
\end{equation}
which is a contradiction to what we assumed of $N_{\epsilon}^{**}$.\\
\\
{\it Step 2.} We claim that $|x_{\varepsilon}^{**}|$ is bounded. \par
We prove this claim by contradiction.
After calculation, we have
\begin{equation}\label{e007}
\begin{split}
&|g_{1,j,\varepsilon}|\\
\leq& C\left(\frac{e^{\beta_{j,\varepsilon}}}{(1+|x|)^{8+o(1)}} +\frac{ \mu_{j,\varepsilon}^{-1}| \tilde{\eta}_{2,j,\varepsilon}(x)| + | \tilde{\eta}_{2,j,\varepsilon}(x)|^2+\mu_{j,\varepsilon}^{-1}|Df_{2,j,\varepsilon}(x_{j,\varepsilon})||x|+\mu_{j,\varepsilon}^{-2}|x|^2}{(1+|x|)^{4+o(1)}} \right)\\
\leq&C\left(    \frac{e^{\beta_{j,\varepsilon}}}{(1+|x|)^{8+o(1)}}  +\frac{\mu_{j,\varepsilon}^{-1}\alpha_{\varepsilon}N_{\varepsilon}}{(1+|x|)^{4-\tau+o(1)}}  +\frac{(\alpha_{\varepsilon}N_{\varepsilon})^2}{(1+|x|)^{4-2\tau+o(1)}}+\frac{\mu_{j,\varepsilon}^{-1}|Df_{2,j,\varepsilon}(x_{j,\varepsilon})| |x|+\mu_{j,\varepsilon}^{-2}|x|^2}{(1+|x|)^{4+ o(1)}}         \right)
\end{split}
\end{equation}
where $f_{i,j,\epsilon}$ is defined in (\ref{fij}).
Moreover we have
\begin{align*}
&\Delta \omega_{1,j,\varepsilon}+2\tau\frac{1}{1+x_2}\frac{\partial \omega_{1,j,\varepsilon}}{\partial x_2}-\left(\frac{\tau(1-\tau)}{(1+x_2)^2}\right)\omega_{1,j,\varepsilon}+e^{u_{0,2}(x_{\varepsilon})+\tilde{U}^*_{2,j,\varepsilon}}\omega_{2,j,\varepsilon}\\
=&\frac{g_{1,j,\varepsilon}(x^-)-g_{1,j,\varepsilon}(x) }{\alpha_{\varepsilon}(1+x_2)^{\tau}}.
\end{align*}
Recall that $D \omega_{1,j,\varepsilon}(x_{\varepsilon}^{**})=0$ and $\Delta \omega_{1,j,\varepsilon}(x_{\varepsilon}^{**})\leq 0$.
If $|x_{\varepsilon}^{**}|\geq R$ for some large $R>0$,
then
\begin{equation*}
\frac{\omega_{1,j,\varepsilon}(x_{\varepsilon}^{**})}{(1+x_{\varepsilon,2}^{**})^2}\leq  C\left( \frac{g_{1,j,\varepsilon}(x_{\varepsilon}^{**})-g_{1,j,\varepsilon}(x_{\varepsilon}^{**^-}) }{\alpha_{\varepsilon}(1+x_{\varepsilon,2}^{**})^{\tau}}
+\frac{|\omega_{2,j,\varepsilon}(x_{\varepsilon}^{**})|}{(1+|x_{\varepsilon}^{**}|)^{4+o(1)}}\right).
\end{equation*}
By this and \eqref{e007}, we obtain

\begin{equation}\label{re1}
\begin{split}
&\alpha_{\varepsilon}\omega_{1,j,\varepsilon}(x_{\varepsilon}^{**})\\ 
\leq&C(1+|x_{\varepsilon}^{**}|)^{2-\tau}\left(    \frac{e^{\beta_{j,\varepsilon}}}{(1+|x_{\varepsilon}^{**}|)^{8+o(1)}}  +\frac{\mu_{j,\varepsilon}^{-1}\alpha_{\varepsilon}N_{\varepsilon}}{(1+|x_{\varepsilon}^{**}|)^{4-\tau+o(1)}} +\frac{(\alpha_{\varepsilon}N_{\varepsilon})^2}{(1+|x_{\varepsilon}^{**}|)^{4-2\tau+o(1)}}\right. \\
&\left. +\frac{\mu_{j,\varepsilon}^{-1}|Df_{2,j,\varepsilon}(x_{j,\varepsilon})| |x_{\varepsilon}^{**}|+\mu_{j,\varepsilon}^{-2}|x_{\varepsilon}^{**}|^2}{(1+|x_{\varepsilon}^{**}|)^{4+ o(1)}}      +\frac{\alpha_{\varepsilon}N_{\varepsilon}}{(1+|x_{\varepsilon}^{**}|)^{4-\tau+o(1)}}   \right)
\end{split}
\end{equation}
Note that $\alpha_{\varepsilon}N_{\varepsilon}=o(1)$.  We deduce from \eqref{re1} that
\begin{equation}\label{e112}
N_{\varepsilon}^{**}\leq \frac{1}{\alpha_{\varepsilon}}(\mu_{j,\varepsilon}^{-1}|D f_{2,j,\varepsilon}(x_{\varepsilon})|+e^{\beta
_{j,\varepsilon}} +\mu_{j,\varepsilon}^{-2}   )=O(1)
\end{equation}
provided $0<\tau\leq \frac{1}{2}$ and $|x_{\varepsilon}^{**}|$ is large. It is a contradiction to $N_{\epsilon}^*\ge c_0N_{\epsilon}\to \infty$. \\
\\
{\it Step 3. } Let $\omega_{i,j,\varepsilon}^{**}=\frac{\omega_{i,\varepsilon}}{N_{\varepsilon}^{**}}$, $i=1,2.$  Then
$\frac{g_{i,\varepsilon}}{\alpha_{\varepsilon}N_{\varepsilon}^{**}(1+x_2)^{\tau}}\to 0$, $i=1,2.$
So, $((1+x_2)^{\tau}\omega_{1,j,\varepsilon}^{**},(1+x_2)^{\tau}\omega_{2,j,\varepsilon}^{**} )\to (\omega_1,\omega_2)\not=(0,0)$
in any compact set of $\mathbb{R}^2$, and $(\omega_1,\omega_2)$
satisfies
\begin{equation}\label{re2}
\begin{cases}
\Delta \omega_1+e^{u_{0,2}(q)+U_2}\omega_2=0\\
\Delta \omega_2+e^{u_{0,1}(q)+U_1}\omega_1=0\\
\omega_1(x_1,0)=\omega_{2}(x_1,0)=0.
\end{cases}
\end{equation}
On the other hand, we deduced from $D\tilde{\eta}_{1,j,\varepsilon}(0)=O(\mu_{j,\varepsilon}^{-2})$ that $D \omega_1(0)=0$.
Together with \eqref{re2}, we obtain $\omega_1=\omega_2\equiv 0.$ This is a contradiction. We conclude that
$$N_{\varepsilon}^*=o(N_{\varepsilon}).$$
\end{proof}
(Proof of Proposition \ref{pro31} )
Denote
$$\varphi_{i,j,\varepsilon}=\frac{1}{2\pi}\int_{0}^{2\pi}\tilde{\eta}_{i,j,\varepsilon}(r,\theta)d\theta,\,\, r=|x-x_{\varepsilon}|,\,\,i=1,2.$$
By Lemma \ref{le32},  we conclude that
\begin{equation}\label{e098}
\sup_{r\leq \mu_{j,\varepsilon}^{-1}\delta} \max \left\{\frac{\varphi_{1,j,\varepsilon}}{(1+r)^{\tau}} ,\frac{\varphi_{2,j,\varepsilon}}{(1+r)^{\tau}}   \right\}=\alpha_{\varepsilon}N_{\varepsilon}(1+o(1)).
\end{equation}
Without loss of generality, we assume that $ \frac{\varphi_{1,j,\varepsilon}}{(1+r)^{\tau}} $ attains the maximum of
$$\sup_{r\leq \mu_{j,\varepsilon}^{-1}\delta} \max \left\{\frac{\varphi_{1,j,\varepsilon}}{(1+r)^{\tau}} ,\frac{\varphi_{2,j,\varepsilon}}{(1+r)^{\tau}}   \right\}$$
at $r_{\varepsilon}$.
Suppose $r_{\varepsilon}$ is bounded. By integrating \eqref{e008}, we obtain
\begin{equation}\label{e099}
\begin{cases}
\Delta \varphi_{1,j,\varepsilon}+e^{u_{0,2}(x_{\varepsilon})+V_{2,j,\varepsilon}}\varphi_{2,j,\varepsilon}=h_{1,j,\varepsilon},\\
\Delta \varphi_{2,j,\varepsilon}+e^{u_{0,1}(x_{\varepsilon})+V_{1,j,\varepsilon}}\varphi_{1,j,\varepsilon}=h_{2,j,\varepsilon},
\end{cases}
\end{equation}
where $h_{i,j,\varepsilon}=\frac{1}{2\pi}\int_{0}^{2\pi} g_{i,j,\varepsilon}(r,\theta)d\theta$, $r=|x-x_{\varepsilon}|$.
Set
$$\varphi_{i,j,\varepsilon}^*(x)=\frac{\varphi_{i,j,\varepsilon}(|x|)}{\varphi_{1,j,\varepsilon}(r_{\varepsilon})},\quad i=1,2.$$
By \eqref{e098}, we have
$$|\varphi_{i,j,\varepsilon}^*(x)|\leq \frac{C|\varphi_{i,j,\varepsilon}|}{\alpha_{\varepsilon}N_{\varepsilon}} \leq C(1+|x|)^{\tau},  $$
and   by \eqref{e007}
\begin{equation}\label{re4}
\begin{split}
 &\frac{|h_{1,j,\varepsilon}(r)|}{|\varphi_{1,j,\varepsilon}(r_{\varepsilon})|  } \leq \frac{C|h_{1,j,\varepsilon}(r)|}{\alpha_{\varepsilon}N_{\varepsilon}}\\
  \leq& C\left(\frac{1}{N_{\varepsilon}}+\mu_{j,\varepsilon}^{-1}+\alpha_{\varepsilon}N_{\varepsilon}   +\frac{\mu_{j,\varepsilon}^{-1}|Df_{2,j,\varepsilon}(x_{\varepsilon})| +\mu_{j,\varepsilon}^{-2} }{\alpha_{\varepsilon}N_{\varepsilon}}\right).
\end{split}
\end{equation}
Note that $\alpha_{\varepsilon}N_{\varepsilon}=o(1)$.  Thus, $(\varphi^*_{1,j,\varepsilon},\varphi^*_{2,j,\varepsilon})\to (\varphi_1,\varphi_2)\not=(0,0)$ in any compact subset of $\mathbb{R}^2$ and $(\varphi_1,\varphi_2)$ satisfies
\begin{equation}
\begin{cases}
\Delta \varphi_1+e^{u_{0,2}(q_j)+V_{2,j}}\varphi_2=0,\\
\Delta \varphi_2+e^{u_{0,2}(q_j)+V_{1,j}}\varphi_1=0.
\end{cases}
\end{equation}
But $\varphi^*_{i,j,\varepsilon}(0)=O(\frac{\mu_{j,\varepsilon}^{-2}}{\alpha_{\varepsilon}N_{\varepsilon}})=o(1),\,\,i=1,2,$ and thus
$\varphi_1(0)=\varphi_2(0)=0$, which implies $(\varphi_1,\varphi_2)\equiv (0,0)$, a contradiction to the fact that at least one of
$\varphi^*_{1,j,\epsilon}(r_{\epsilon}) $ and $\varphi^*_{2,j,\epsilon}(r_{\epsilon}) $
is $1$.

Next, assume that $r_{\varepsilon}\to \infty$. After calculation, we obtain
\begin{equation*}
r\varphi_{1,j,\varepsilon}'(r)\leq C\left(  {\alpha_{\varepsilon}N_{\varepsilon}} + e^{\beta_{j,\varepsilon}} +\mu_{j,\varepsilon}^{-2}+ {\mu_{j,\varepsilon}^{-1}|Df_{2,j,\varepsilon}(x_{\varepsilon})|} + {\mu_{j,\varepsilon}^{-2}\ln (1+r)} \right)
\end{equation*}
for $r>1$. Let $r_0$ be a large constant. Then
\begin{equation*}
\begin{split}
\alpha_{\varepsilon}N_{\varepsilon}(1+r_{\varepsilon})^{\tau}&=|\varphi_{1,j,\varepsilon}(r_{\varepsilon})|
\leq \int_{r_0}^{r_{\varepsilon}}|\varphi_{1,j,\varepsilon}'(r) |dr\\
&\leq C\left(
( {\alpha_{\varepsilon}N_{\varepsilon}} + e^{\beta_{j,\varepsilon}} +\mu_{j,\varepsilon}^{-2}+ {\mu_{j,\varepsilon}^{-1}|Df_{2,j,\varepsilon}(x_{\varepsilon})|})\ln r_{\varepsilon}+\mu_{j,\varepsilon}^{-2}(\ln r_{\varepsilon})^{2}\right),
\end{split}
\end{equation*}
from which, we deduce that 
\begin{equation}\label{e113}
N_{\varepsilon}\leq C\left(\frac{ e^{\beta_{j,\varepsilon}} +\mu_{j,\varepsilon}^{-2}+ {\mu_{j,\varepsilon}^{-1}|Df_{2,j,\varepsilon}(x_{\varepsilon})|} +\mu_{j,\varepsilon}^{-2}}{\alpha_{\varepsilon}}\right)=O(1)
\end{equation}
Again, it is a contradiction. We conclude that $N_{\varepsilon}$ is bounded.

\section{Sharp estimate($0<\tau\leq 1$) and Necessary condition}
By Proposition \ref{pro31}, we have

\begin{prop}\label{pro41}
For $\tau\in(0,\frac{1}{2}]$ and $\tau'\in(0,1)$, it holds
\begin{equation*}
| {\eta}_{i,j,\varepsilon}(x)|\leq C(1+ \mu_{j,\varepsilon}|x-x_{j,\varepsilon}|)^{\tau}( \mu_{j,\varepsilon}^{-2\tau}+e^{\tau'\beta_{j,\varepsilon}}), \,\,x\in B_{ \delta}(x_{j,\varepsilon}),\,\,i=1,2 ,\,\, j=1,\cdots,k.
\end{equation*}
\end{prop}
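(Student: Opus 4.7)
The plan is to observe that Proposition 4.1 is essentially a change of variables restatement of Proposition 3.1, so no new analysis is required beyond setting up the correct substitution. The heart of the work has already been done in Section 3, where the rescaled error $\tilde{\eta}_{i,j,\varepsilon}$ was shown to satisfy a uniform bound of the form $N_\varepsilon \le C$ for $\tau \in (0, 1/2]$ and $\tau' \in (0,1)$.

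Concretely, I would unwind the definition of $\tilde{\eta}_{i,j,\varepsilon}$. Recall that for any function $\xi$, $\tilde{\xi}(y) = \xi(\mu_{j,\varepsilon}^{-1} y + x_{j,\varepsilon})$, so setting $y = \mu_{j,\varepsilon}(x - x_{j,\varepsilon})$ we get the pointwise identity
\[
\eta_{i,j,\varepsilon}(x) = \tilde{\eta}_{i,j,\varepsilon}\bigl(\mu_{j,\varepsilon}(x - x_{j,\varepsilon})\bigr), \qquad i = 1,2.
\]
The domain condition $|y| \le \mu_{j,\varepsilon}\delta$ in the definition of $N_\varepsilon$ translates exactly to $x \in B_\delta(x_{j,\varepsilon})$, which is the range in which Proposition 4.1 is asserted.

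By Proposition 3.1, with the range $\tau \in (0, 1/2]$ and $\tau' \in (0,1)$, we have
\[
|\tilde{\eta}_{i,j,\varepsilon}(y)| \le N_\varepsilon \, \alpha_\varepsilon (1 + |y|)^\tau \le C \alpha_\varepsilon (1 + |y|)^\tau
\]
for $|y| \le \mu_{j,\varepsilon}\delta$, where $\alpha_\varepsilon = \mu_{j,\varepsilon}^{-2\tau} + e^{\tau'\beta_{j,\varepsilon}}$. Substituting $y = \mu_{j,\varepsilon}(x - x_{j,\varepsilon})$ into this bound immediately yields
\[
|\eta_{i,j,\varepsilon}(x)| \le C \bigl(1 + \mu_{j,\varepsilon}|x - x_{j,\varepsilon}|\bigr)^\tau \bigl(\mu_{j,\varepsilon}^{-2\tau} + e^{\tau'\beta_{j,\varepsilon}}\bigr)
\]
for all $x \in B_\delta(x_{j,\varepsilon})$, which is exactly the claim.

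Since the substance is already contained in Proposition 3.1, there is no genuine obstacle here; the only thing to be careful about is bookkeeping—matching the growth weight $(1+|y|)^\tau$ with $(1+\mu_{j,\varepsilon}|x-x_{j,\varepsilon}|)^\tau$ under the rescaling and verifying that the admissible ranges of $\tau$ and $\tau'$ are preserved verbatim. The genuine difficulty was already addressed in Section 3 via the contradiction argument bounding $N_\varepsilon$, and Proposition 4.1 merely records its corollary in the original coordinates, a form that will be convenient when carrying out the sharper second-order expansion in later sections.
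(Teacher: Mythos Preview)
Your proposal is correct and matches the paper's approach exactly: the paper simply states Proposition~4.1 with the preface ``By Proposition~\ref{pro31}, we have,'' giving no further argument, since the inequality is precisely the bound $N_\varepsilon\le C$ rewritten in the original coordinates via $y=\mu_{j,\varepsilon}(x-x_{j,\varepsilon})$. Your unwinding of the rescaling and the domain correspondence is accurate and complete.
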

With this proposition and symmetry, we are led to a refined estimate for the radial part of $g_{i,j,\varepsilon}$, $h_{i,j,\varepsilon}$.
  Note that
\begin{equation}\label{e014}
H_{2,j,\varepsilon }(x,\eta_{2,j,\varepsilon})-\eta_{2,j,\varepsilon}=H_{2,j,\varepsilon}(x,0)+H_{2,j,\varepsilon}(x,0)\eta_{2,j,\varepsilon}+O(|\eta_{2,j,\varepsilon}|^2)\end{equation}
and
\begin{equation}\label{e015}\begin{split}
H_{2,j,\varepsilon}(x,0)=&Df_{2,j,\varepsilon}(x_{j,\varepsilon})\cdot (x-x_{j,\varepsilon})+\frac{1}{2}\sum_{a,b=1,2}D^2f_{2,j,\varepsilon}(x-x_{j,\varepsilon})_a(x-x_{j,\varepsilon})_b\\
&+|D f_{2,j,\varepsilon}(x_{j,\varepsilon})|^2|x-x_{j,\varepsilon}|^2+O(|x-x_{j,\varepsilon}|^3)
\end{split}
\end{equation}
By  \eqref{e104}, \eqref{e105} and symmetry, we have
\begin{equation}\label{e111}
\begin{split}
h_{1,j,\varepsilon}=&\frac{1}{2\pi} \int_{0}^{2\pi}g_{1,j,\varepsilon}\\
=&-\frac{1}{2\pi} \int_0^{2\pi}e^{\tilde{U}^*_{2,j,\varepsilon} + {u}_{0,2}(x_{j,\varepsilon})}( \tilde{H}_{2,j,\varepsilon}(x,\tilde{\eta}_{2,j,\varepsilon})-\tilde{\eta}_{2,j,\varepsilon})d\theta \\
&+\frac{1}{2\pi} \int_0^{2\pi}\left(e^{u_{0,2}(x_{j,\varepsilon})}(e^{ \tilde{U}_{2,j,\varepsilon}  }-
e^{ \tilde{U}^*_{2,j,\varepsilon}  } )\tilde{\eta}_{2,j,\varepsilon}   + e^{-\beta_{\varepsilon}}e^{\sum_{i=1}^2 (\tilde{u}_{i,\varepsilon} +\tilde{u}_{i,0})}\right)d\theta \\
 =& -\frac{1}{2\pi} \int_0^{2\pi}e^{\tilde{U} _{2,j,\varepsilon} + {u}_{0,2}(x_{j,\varepsilon})}\left( |Df_{2,j,\varepsilon}(x_{j,\varepsilon})|^2\mu_{j,\varepsilon}^{-2}r^2 + |\tilde{\eta}_{2,j,\varepsilon}|^2\right)d\theta \\
 &+O\left( \mu_{j,\varepsilon} |x_{j,\varepsilon}-x_{j,\varepsilon}^*|  \int_0^{2\pi} e^{\tilde{U} _{2,j,\varepsilon} + {u}_{0,2}(x_{j,\varepsilon})}  |\tilde{\eta}_{2,j,\varepsilon}|   d\theta \right)\\
&+O\left(\frac{e^{\beta_{j,\varepsilon}}}{(1+r)^{8+o(1)}}\right) \\
=&O\left(  \frac{(\mu_{j,\varepsilon}^{-2}+e^{\beta_{j,\varepsilon}})}{(1+r)^{2+o(1)}} \right),\end{split}
\end{equation}
where $r=|x-x_{j,\varepsilon}|$.
Then, using the same argument of Lin-Zhang(page 2608 of \cite{ZL-JFA}), we have an estimate for the radial part of $\tilde{\eta}_{i,j,\varepsilon}$.
\begin{prop}\label{pro42}
It holds
\begin{equation}\label{e100}
| {\varphi}_{i,j,\varepsilon}(r)|\leq C(\mu_{j,\varepsilon}^{-2}+e^{\beta_{j,\varepsilon}})(1+r)^{\delta_1},\qquad    r\leq \mu_{j,\varepsilon}\delta
\end{equation}

for some small $\delta_1>0$ and $i=1,2 ,\,\, j=1,\cdots,k.$
\end{prop}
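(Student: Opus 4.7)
The plan is to adapt the blowup-normalization argument used in the proof of Proposition~\ref{pro31} to the radial system \eqref{e099}, but now exploiting the sharper decay of $h_{i,j,\varepsilon}$ established in \eqref{e111}. Set $\lambda_{j,\varepsilon}=\mu_{j,\varepsilon}^{-2}+e^{\beta_{j,\varepsilon}}$ and, for a small $\delta_1>0$ to be chosen, let
\[
\Lambda_{\varepsilon}=\sup_{r\leq \mu_{j,\varepsilon}\delta}\max_{i=1,2}\frac{|\varphi_{i,j,\varepsilon}(r)|}{\lambda_{j,\varepsilon}(1+r)^{\delta_1}}.
\]
Arguing by contradiction, suppose $\Lambda_{\varepsilon}\to\infty$, and let $r_{\varepsilon}$ realize the supremum. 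The estimate (\ref{e111}) gives $|h_{i,j,\varepsilon}(r)|\leq C\lambda_{j,\varepsilon}(1+r)^{-2+o(1)}$, where the improvement over the argument in Section~3 comes from two sources: the Lemma~\ref{lem32} identity forces $|Df_{2,j,\varepsilon}(x_{j,\varepsilon})|=O(\mu_{j,\varepsilon}^{-1}+e^{\beta_{j,\varepsilon}/2})$, and the quadratic term $|\tilde\eta_{2,j,\varepsilon}|^{2}$ is controlled by Proposition~\ref{pro41} with $\tau=1/2$.

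Split the contradiction into two cases. If $r_{\varepsilon}$ remains bounded, rescale $\varphi^{*}_{i,j,\varepsilon}=\varphi_{i,j,\varepsilon}/(\lambda_{j,\varepsilon}\Lambda_{\varepsilon})$, so that on any compact set $|h_{i,j,\varepsilon}|/(\lambda_{j,\varepsilon}\Lambda_{\varepsilon})\to 0$. Passing to a subsequence in \eqref{e099}, $(\varphi^{*}_{1,j,\varepsilon},\varphi^{*}_{2,j,\varepsilon})\to(\varphi_{1},\varphi_{2})$ in $C^{1}_{\mathrm{loc}}(\mathbb{R}^{2})$, where the limit is radial, bounded by $(1+r)^{\delta_1}$, and solves the linearized Liouville system
\[
\Delta\varphi_{1}+e^{u_{0,2}(q_{j})+V_{2,j}}\varphi_{2}=0,\qquad
\Delta\varphi_{2}+e^{u_{0,1}(q_{j})+V_{1,j}}\varphi_{1}=0\quad\text{in }\mathbb{R}^{2}.
\]
Since $\tilde\eta_{i,j,\varepsilon}(0)=O(\mu_{j,\varepsilon}^{-2})=o(\lambda_{j,\varepsilon}\Lambda_{\varepsilon})$, we obtain $\varphi_{1}(0)=\varphi_{2}(0)=0$. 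The classification formula \eqref{e097} (radial part only) shows that the unique radial bounded solution is a multiple of $(rV'_{1,j}+2,rV'_{2,j}+2)$, which equals $(2,2)$ at the origin; hence $\varphi_{1}\equiv\varphi_{2}\equiv 0$, contradicting $|\varphi^{*}_{i,j,\varepsilon}(r_{\varepsilon})|\to 1$ for some $i$.

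If instead $r_{\varepsilon}\to\infty$, integrate the radial ODE \eqref{e099} twice using the decay $|h_{i,j,\varepsilon}|\leq C\lambda_{j,\varepsilon}(1+r)^{-2+o(1)}$ together with the fast decay of $e^{V_{i,j,\varepsilon}}$ from \eqref{e058}; this yields $r|\varphi'_{i,j,\varepsilon}(r)|=O(\lambda_{j,\varepsilon}\ln(1+r))$ and then $|\varphi_{i,j,\varepsilon}(r)|\leq C\lambda_{j,\varepsilon}(\ln(1+r))^{2}$ for $1\leq r\leq \mu_{j,\varepsilon}\delta$, which for any fixed $\delta_{1}>0$ contradicts the assumption that the normalized ratio is unbounded at $r_{\varepsilon}$. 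The main obstacle is the first case: one must check that the classification \eqref{e097} really excludes any nontrivial radial profile consistent with $\varphi_{i}(0)=0$, and that the implicit terms in $h_{i,j,\varepsilon}$ coming from the coupling $e^{-\beta_{j,\varepsilon}}e^{\sum(\tilde u_{i,\varepsilon}+\tilde u_{i,0})}$ and from the quadratic error genuinely obey the sharp $(1+r)^{-2+o(1)}$ decay claimed in \eqref{e111}, where the critical-point information from Lemma~\ref{lem32} is essential.
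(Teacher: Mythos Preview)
Your overall strategy is correct and is exactly what the paper has in mind: it defers the proof to the argument of Lin--Zhang \cite[p.~2608]{ZL-JFA}, which is precisely this blowup normalization applied to the radial system \eqref{e099} with the improved right-hand side \eqref{e111}, replacing $\alpha_{\varepsilon}$ by $\lambda_{j,\varepsilon}=\mu_{j,\varepsilon}^{-2}+e^{\beta_{j,\varepsilon}}$. The two-case dichotomy and the use of the kernel classification are fine; in the bounded-$r_{\varepsilon}$ case you should cite \eqref{e096} rather than \eqref{e097}, since the limit is only known to grow like $(1+r)^{\delta_{1}}$ with $\delta_{1}\in(0,1)$, but the conclusion is identical because a radial element of that kernel must have $c_{1}=c_{2}=0$.

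The one genuine slip is in your justification of \eqref{e111}. You claim that Lemma~\ref{lem32} forces $|Df_{2,j,\varepsilon}(x_{j,\varepsilon})|=O(\mu_{j,\varepsilon}^{-1}+e^{\beta_{j,\varepsilon}/2})$; it does not. After inserting $m^{*}_{i,j}=8\pi$, the identity \eqref{e054} only constrains the \emph{sum} $D(u_{0,1}+u_{0,2})(q_{j})+16\pi\sum_{l\neq j}DG(q_{l},q_{j})$, i.e.\ roughly $D(f_{1,j}+f_{2,j})(q_{j})=0$, and says nothing about each $Df_{i,j}$ separately. That separate vanishing is the content of Lemma~\ref{lem47}, whose proof passes through Lemma~\ref{lem42} and hence through Proposition~\ref{pro42} itself---so the reasoning you wrote would be circular. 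Fortunately the argument is unaffected: in \eqref{e111} the relevant contribution is $e^{\tilde U_{2,j,\varepsilon}}\,|Df_{2,j,\varepsilon}(x_{j,\varepsilon})|^{2}\mu_{j,\varepsilon}^{-2}r^{2}$, and since $|Df_{2,j,\varepsilon}|=O(1)$ this is already $O\bigl(\mu_{j,\varepsilon}^{-2}(1+r)^{-2+o(1)}\bigr)$ with no smallness of the gradient needed. Simply drop the appeal to Lemma~\ref{lem32} and the rest of your proof goes through.
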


By Proposition \ref{pro41} and Proposition \ref{pro42}, we have  an estimate for the  convergence rate of   $\tilde{M}_{i,j,\varepsilon}$ to $8\pi$.

\begin{lem}\label{lem42} For $i=1,2,$ and $j=1,\cdots,k,$
we have
\begin{equation}\label{e053}
\tilde{M}_{i,j,\varepsilon }-m_{i,j,\varepsilon}=O\left(\mu_{j,\varepsilon}^{-2}\ln \mu_{j,\varepsilon}\sum_{i=1,2}|D f_{i,j,\varepsilon}(x_{j,\varepsilon})|^2 +\mu_{j,\varepsilon}^{-2}+e^{\beta_{j,\varepsilon}} \right).
\end{equation}
and \begin{equation}\label{e076}
 \tilde{M}_{i,j,\varepsilon }-8\pi  =O\left(|\tilde{M}_{i,j,\varepsilon }-m_{i,j,\varepsilon}|^{\frac{1}{2}}\right).
\end{equation}
\end{lem}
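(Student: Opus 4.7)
The plan is to prove the two bounds in sequence. The first is a direct perturbative expansion of $m_{i,j,\varepsilon}-\tilde M_{i,j,\varepsilon}$ in rescaled coordinates; the second exploits the Liouville Pohozaev identity \eqref{e090} together with the global constraint $\sum_j M_{i,j,\varepsilon}=8k\pi$ that comes from integrating \eqref{e002}, via a positivity-plus-summation argument.

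For \eqref{e053}, rescale by $\mu_{j,\varepsilon}$ and use the decomposition $u_{i,\varepsilon}+u_{0,i}-\beta_{j,\varepsilon}-u_{0,i}(x_{j,\varepsilon})=U_{i,j,\varepsilon}^{*}+f_{i,j,\varepsilon}+\eta_{i,j,\varepsilon}$ (with $f_{i,j,\varepsilon}$ as in \eqref{fij}) to split $m_{i,j,\varepsilon}-\tilde M_{i,j,\varepsilon}$ into three pieces. The tail $\int_{\mathbb{R}^2\setminus B_{\mu_{j,\varepsilon}\delta}}e^{u_{0,3-i}(x_{j,\varepsilon})+V_{3-i,j,\varepsilon}}\,dy$ is $O(\mu_{j,\varepsilon}^{-2})$ by the sharp decay \eqref{e058}. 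The coupling correction from $1-e^{u_i+u_{0,i}}$ is $O(e^{\beta_{j,\varepsilon}})$, since $e^{u_i+u_{0,i}}=O(e^{\beta_{j,\varepsilon}})e^{V_{i,j,\varepsilon}}$ and $\int e^{V_{1,j,\varepsilon}}e^{V_{2,j,\varepsilon}}<\infty$. The deformation factor $(e^{f_{i,j,\varepsilon}+\eta_{i,j,\varepsilon}}-1)$ is Taylor-expanded following \eqref{e014}--\eqref{e015}: the linear term $\mu_{j,\varepsilon}^{-1}Df_{i,j,\varepsilon}(x_{j,\varepsilon})\cdot y$ integrates to zero against the radial kernel $e^{V_{3-i,j,\varepsilon}}$ by symmetry, while the quadratic piece $\mu_{j,\varepsilon}^{-2}|Df_{i,j,\varepsilon}(x_{j,\varepsilon})|^2|y|^2$ integrated against $e^{V_{3-i,j,\varepsilon}}\sim|y|^{-4+o(1)}$ gives a logarithmically truncated integral of order $\mu_{j,\varepsilon}^{-2}|Df_{i,j,\varepsilon}(x_{j,\varepsilon})|^2\log\mu_{j,\varepsilon}$; cubic and higher Taylor terms fold into $O(\mu_{j,\varepsilon}^{-2})$. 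For the $\eta_{i,j,\varepsilon}$-piece, only the radial mode $\varphi_{i,j,\varepsilon}$ couples to the radial kernel (angular modes integrate to zero by orthogonality), and Proposition \ref{pro42} yields $O(\mu_{j,\varepsilon}^{-2}+e^{\beta_{j,\varepsilon}})$. Collecting these pieces yields \eqref{e053}.

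For \eqref{e076}, set $A_j:=\tilde M_{1,j,\varepsilon}-8\pi$ and $B_j:=\tilde M_{2,j,\varepsilon}-8\pi$. The Liouville Pohozaev identity \eqref{e090} rearranges into $(A_j+4\pi)(B_j+4\pi)=16\pi^2$, which forces $A_j$ and $B_j$ to have opposite signs and gives the exact relation $A_j+B_j=\frac{A_j^2}{A_j+4\pi}\geq 0$; in particular $|A_j|^2=(A_j+B_j)(A_j+4\pi)\lesssim|A_j+B_j|$ since $A_j\to 0$. On the other hand, integrating \eqref{e002} over $\Omega$ gives $\sum_j M_{i,j,\varepsilon}=8k\pi$ exactly for $i=1,2$, so $\sum_j A_j=-\sum_j(\tilde M_{1,j,\varepsilon}-M_{1,j,\varepsilon})$, which by \eqref{e053} together with the easy estimate $|M_{i,j,\varepsilon}-m_{i,j,\varepsilon}|=O(\mu_{j,\varepsilon}^{-2})$ (obtained from Lemma \ref{lem25} and the fast decay of the rescaled profile away from $x_{j,\varepsilon}$) is $O(|\tilde M_{i,j,\varepsilon}-m_{i,j,\varepsilon}|)$; likewise for $\sum_j B_j$. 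Because each $A_l+B_l\geq 0$, we conclude $0\leq A_j+B_j\leq\sum_l(A_l+B_l)=O(|\tilde M_{i,j,\varepsilon}-m_{i,j,\varepsilon}|)$ for each fixed $j$, and combining with $|A_j|^2\lesssim|A_j+B_j|$ gives $|A_j|=O(|\tilde M_{i,j,\varepsilon}-m_{i,j,\varepsilon}|^{1/2})$, which is \eqref{e076}.

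The principal difficulty is in the first step: one must carry out the Taylor expansion sharply enough that the coefficient of the $\log\mu_{j,\varepsilon}$ term comes out to be the quadratic form $|Df_{i,j,\varepsilon}(x_{j,\varepsilon})|^2$ and not just $|Df_{i,j,\varepsilon}(x_{j,\varepsilon})|$, and that the $\eta_{i,j,\varepsilon}$-contribution is controlled by its radial part via the sharp radial bound Proposition \ref{pro42} rather than the cruder non-radial bound Proposition \ref{pro41}; losing the angular/radial decomposition here would spoil the estimate by a factor of $\mu_{j,\varepsilon}^{-2\tau}$ with $\tau\leq 1/2$, which is weaker than $\mu_{j,\varepsilon}^{-2}$. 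The positivity argument in the second step, while algebraically short, critically depends on the monotonicity structure of the Liouville Pohozaev identity and would not extend to more general Liouville systems.
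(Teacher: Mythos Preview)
Your proposal is correct and follows essentially the same route as the paper: for \eqref{e053} the paper likewise expands $H_{3-i,j,\varepsilon}(x,\eta_{3-i,j,\varepsilon})$ via \eqref{e014}--\eqref{e015}, kills the linear-in-$y$ term by radial symmetry, picks up the $\mu_{j,\varepsilon}^{-2}\ln\mu_{j,\varepsilon}|Df|^2$ from the quadratic term, and controls the $\eta$-contribution by its angular average through Proposition~\ref{pro42}; for \eqref{e076} the paper uses exactly your positivity argument, summing the identity \eqref{e104} over $i,j$ against the constraint $\sum_j M_{i,j,\varepsilon}=8k\pi$. One small technical point the paper makes explicit that you elide is a bootstrap: since the decay of $e^{V_{3-i,j,\varepsilon}}$ is only $|y|^{-\tilde M_{3-i,j,\varepsilon}/(2\pi)}=|y|^{-4+o(1)}$ a priori, the first pass yields \eqref{e053} with $\mu_{j,\varepsilon}^{-2+o(1)}$ in place of $\mu_{j,\varepsilon}^{-2}$, and the paper then feeds the resulting \eqref{e076} back into \eqref{e061}--\eqref{e0731} to remove the $o(1)$ in the exponent.
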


\begin{proof} For convenience, we only prove for the case of $i=1$.\\
   By Proposition \ref{pro41}, we find
\begin{equation}\label{e061}
\begin{split}
&\tilde{M}_{1,j,\varepsilon}-m_{1,j,\varepsilon}\\
=&\frac{1}{\varepsilon^2} \int_{\mathbb{R}^2} e^{u_{0,2}(x_{j,\varepsilon})+\beta_{j,\varepsilon}+U_{2,j,\varepsilon}^*}-\frac{1}{\varepsilon^2}\int_{B_{\delta}(x_{j,\varepsilon})} e^{u_{2,\varepsilon}+u_{0,2}}(1-e^{u_{1,\varepsilon}+u_{0,1}})  \\
= &-\frac{1}{\varepsilon^2}\int_{B_{\delta}(x_{j,\varepsilon})} e^{u_{0,2}(x_{j,\varepsilon})+\beta_{j,\varepsilon} +U_{2,j,\varepsilon}^*  }H_{2,j,\varepsilon}(x,\eta_{2,j,\varepsilon}) + \frac{1}{\varepsilon^2}\int_{\mathbb{R}^2\setminus B_{\delta}(x_{j,\varepsilon})} e^{u_{0,2}(x_{j,\varepsilon})+\beta_{j,\varepsilon} +U_{2,j,\varepsilon}^*  }  \\
&+ \frac{1}{\varepsilon^2}\int_{B_{\delta}(x_{j,\varepsilon})  } e^{\sum_{i=1}^2 (u_{i,\varepsilon}+u_{0,i})}\\
=& -\frac{1}{\varepsilon^2}\int_{B_{\delta}(x_{j,\varepsilon})} e^{u_{0,2}(x_{j,\varepsilon})+\beta_{j,\varepsilon} +U_{2,j,\varepsilon}^*  }H_{2,j,\varepsilon}(x,\eta_{2,j,\varepsilon})+O(\mu_{j,\varepsilon}^{-2+o(1)})+O(e^{\beta_{j,\varepsilon}}).
\end{split}
\end{equation}
 As in \eqref{e111}, we obtain
\begin{equation}\label{e062}
\begin{split}
&\frac{1}{\varepsilon^2}\int_{B_{\delta}(x_{j,\varepsilon)}} e^{u_{0,2}(x_{j,\varepsilon})+\beta_{j,\varepsilon} +U_{2,j,\varepsilon}^*}H_{2,j,\varepsilon}(x,\eta_{2,j,\varepsilon})\\
=&\frac{1}{\varepsilon^2}\int_{B_{\delta}(x_{j,\varepsilon)}} e^{u_{0,2}(x_{j,\varepsilon})+\beta_{j,\varepsilon} +U_{2,j,\varepsilon} }H_{2,j,\varepsilon}(x,\eta_{2,j,\varepsilon})\\
&+ \frac{1}{\varepsilon^2}\int_{B_{\delta}(x_{j,\varepsilon)}} e^{u_{0,2}(x_{j,\varepsilon})+\beta_{j,\varepsilon}  +U_{2,j,\varepsilon}}  (e^{U_{2,j,\varepsilon}^*-U_{2,j,\varepsilon}}-1)H_{2,j,\varepsilon}(x,\eta_{2,j,\varepsilon})\\
=&\frac{1}{\varepsilon^2}\int_{B_{\delta}(x_{j,\varepsilon)}} e^{u_{0,2}(x_{j,\varepsilon})+\beta_{j,\varepsilon}  +U_{2,j,\varepsilon}}\eta_{2,j,\varepsilon}+O\left(\frac{1}{\varepsilon^2} \int_{B_{\delta}(x_{j,\varepsilon)}} e^{u_{0,2}(x_{j,\varepsilon})+\beta_{j,\varepsilon}  +U_{2,j,\varepsilon}}|\eta_{2,j,\varepsilon}|^2 \right)\\
&+\frac{1}{\varepsilon^2} |D f_{2,j,\varepsilon}(x_{j,\varepsilon})|^2 \int_{B_{\delta}(x_{j,\varepsilon)}} e^{u_{0,2}(x_{j,\varepsilon})+\beta_{j,\varepsilon}  +U_{2,j,\varepsilon}}|x-x_{j,\varepsilon}|^2 \\
&+O\left(\frac{1}{\varepsilon^2}\int_{B_{\delta}(x_{j,\varepsilon)}} e^{u_{0,2}(x_{j,\varepsilon})+\beta_{j,\varepsilon}  +U_{2,j,\varepsilon}}|x-x_{j,\varepsilon}|^4\right)\\
&+ \frac{1}{\varepsilon^2}\int_{B_{\delta}(x_{j,\varepsilon)}} e^{u_{0,2}(x_{j,\varepsilon})+\beta_{j,\varepsilon}  +U_{2,j,\varepsilon}}  (e^{U_{2,j,\varepsilon}^*-U_{2,j,\varepsilon}}-1)H_{2,j,\varepsilon}(x,\eta_{2,j,\varepsilon})
\end{split}
\end{equation}
By Proposition \ref{pro42}, we have
\begin{equation}
\begin{split}
&\frac{1}{\varepsilon^2}\int_{B_{\delta}(x_{j,\varepsilon)}} e^{u_{0,2}(x_{j,\varepsilon})+\beta_{j,\varepsilon}  +U_{2,j,\varepsilon}}\eta_{2,j,\varepsilon}\\
= &\frac{e^{u_{0,2}(x_{j,\varepsilon})}}{2\pi}\int_{0}^{\mu_{j,\varepsilon}\delta} r e^{V_{2,j,\varepsilon}(r)} \int_{0}^{2\pi}\tilde{\eta}_{2,j,\varepsilon}d\theta dr\\
=&O(\mu_{j,\varepsilon}^{-2 }+e^{\beta_{j,\varepsilon}}).
\end{split}
\end{equation}
On the other hand, by Proposition \ref{pro41}, we have
\begin{equation}
 \frac{1}{\varepsilon^2}\int_{B_{\delta}(x_{j,\varepsilon)}} e^{u_{0,2}(x_{j,\varepsilon})+\beta_{j,\varepsilon}  +U_{2,j,\varepsilon}}|\eta_{2,j,\varepsilon}|^2=O(\mu_{j,\varepsilon}^{-2\tau}+e^{\tau'\beta_{j,\varepsilon}})^2.
\end{equation}
After a direct  calculation, we find
\begin{equation}
\begin{split}
&\frac{1}{\varepsilon^2} |D f_{2,j,\varepsilon}(x_{j,\varepsilon})|^2 \int_{B_{\delta}(x_{j,\varepsilon)}} e^{u_{0,2}(x_{j,\varepsilon})+\beta_{j,\varepsilon}  +U_{2,j,\varepsilon}}|x-x_{j,\varepsilon}|^2\\
=&O(\mu_{j,\varepsilon}^{-2+o(1)}\ln \mu_{j,\varepsilon} |D f_{2,j,\varepsilon}(x_{j,\varepsilon})|^2  )
\end{split}
\end{equation}
and
\begin{equation}\label{e0731}
\frac{1}{\varepsilon^2}\int_{B_{\delta}(x_{j,\varepsilon)}} e^{u_{0,2}(x_{j,\varepsilon})+\beta_{j,\varepsilon}  +U_{2,j,\varepsilon}}|x-x_{j,\varepsilon}|^4= O(\delta^{2+o(1)}\mu_{j,\varepsilon}^{-2+o(1)}).
\end{equation}
Combining \eqref{e061}-\eqref{e0731}, we conclude that
\begin{equation}
\tilde{M}_{1,j,\varepsilon}-m_{1,j,\varepsilon}=O(|D f_{2,j,\varepsilon}(x_{j,\varepsilon})|^2\mu_{j,\varepsilon}^{-2+o(1)} \ln \mu_{j,\varepsilon})+O(\mu_{j,\varepsilon}^{-2+o(1)}+e^{\beta_{j,\varepsilon}}).
\end{equation}
Recall that $(\tilde{M}_{1,j,\varepsilon},\tilde{M}_{2,j,\varepsilon})$ satisfies
\begin{equation}
\tilde{M}_{1,j,\varepsilon}+\tilde{M}_{2,j,\varepsilon}=\frac{   \tilde{M}_{1,j,\varepsilon}\tilde{M}_{2,j,\varepsilon}     }{4\pi},
\end{equation}
and
 $\sum_{j=1}^k M_{i,j,\varepsilon}=8k\pi$, $i=1,2.$
By these and \eqref{e104},  we obtain
\begin{equation}\begin{split}
&\sum_{i=1,2}\sum_{j=1,\cdots,k}(\tilde{M}_{i,j,\varepsilon}-M_{i,j,\varepsilon})
=  \sum_{i=1,2}\sum_{j=1,\cdots,k}(\tilde{M}_{i,j,\varepsilon}-8\pi)\\
= &\frac{1}{2}\sum_{i=1,2}\sum_{j=1,\cdots,k}\frac{   (\tilde{M}_{i,j,\varepsilon}-8\pi)^2 }{\tilde{M}_{i,j,\varepsilon}-4\pi}
\end{split}
\end{equation}
We thus conclude that  $$\tilde{M}_{i,j,\varepsilon}=8\pi+O( |\tilde{M}_{i,j,\varepsilon}-{m}_{i,j,\varepsilon}|^{\frac{1}{2}} ).$$ 
Applying this  to the equations \eqref{e061}-\eqref{e0731}, the term $o(1)$ in the exponent of $\mu_{j,\varepsilon}$ can be removed.  We thus obtain \eqref{e053}.

\end{proof}
By Propositions \ref{pro41} and \ref{pro42} and Lemma \ref{lem42}, we can improve Lemma \ref{lem26} as follows.
\begin{lem}\label{lem44} For $i=1,2,$ it holds
\begin{equation}\label{re3}
u_{i,\varepsilon}=\frac{1}{|\Omega|}\int_{\Omega}u_{i,\varepsilon}+\sum_{j=1}^k \tilde{M}_{i,j,\varepsilon} G(x_{j,\varepsilon},x)+O(|\tilde{M}_{i,j,\varepsilon}-m_{i,j,\varepsilon}|)
\end{equation}
 {   in   } $C^{1}(\Omega\setminus \cup_{j=1}^k B_{\delta}(x_{j,\varepsilon})).$
\end{lem}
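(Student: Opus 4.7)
The strategy is to rerun the argument of Lemma \ref{lem26} but use the sharper pointwise bound on $\eta_{i,j,\varepsilon}$ from Propositions \ref{pro41} and \ref{pro42}, together with the energy comparison Lemma \ref{lem42}, to upgrade the remainder from $O(\mu_{j,\varepsilon}^{-1+o(1)})$ to $O(|\tilde M_{i,j,\varepsilon}-m_{i,j,\varepsilon}|)$. Starting from the Green representation
\[
u_{i,\varepsilon}(x)-\tfrac{1}{|\Omega|}\!\int_{\Omega}u_{i,\varepsilon}
=\tfrac{1}{\varepsilon^{2}}\!\int_{\Omega}G(y,x)\,e^{u_{i',\varepsilon}+u_{0,i'}}\bigl(1-e^{u_{i,\varepsilon}+u_{0,i}}\bigr)\,dy
\]
with $i'=3-i$, the integrand is super-exponentially small on $\Omega\setminus\bigcup_{l}B_{\delta}(x_{l,\varepsilon})$, so only the local contributions near each $x_{l,\varepsilon}$ matter.

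On each ball $B_{\delta}(x_{l,\varepsilon})$ I would Taylor expand $G(y,x)=G(x_{l,\varepsilon},x)+\nabla_{y}G(x_{l,\varepsilon},x)\!\cdot\!(y-x_{l,\varepsilon})+O(|y-x_{l,\varepsilon}|^{2})$. The constant term yields $m_{i,l,\varepsilon}G(x_{l,\varepsilon},x)$ directly. After rescaling $y=x_{l,\varepsilon}+\mu_{l,\varepsilon}^{-1}z$, the quadratic Taylor remainder becomes $O(\mu_{l,\varepsilon}^{-2})$, which by Lemma \ref{lem42} is absorbed in $|\tilde M_{i,l,\varepsilon}-m_{i,l,\varepsilon}|$. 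For the linear Taylor term, rescaling yields
\[
\mu_{l,\varepsilon}^{-1}\!\int_{B_{\mu_{l,\varepsilon}\delta}}z\,e^{\tilde u_{i',\varepsilon}+\tilde u_{0,i'}-\beta_{l,\varepsilon}}\bigl(1-e^{\tilde u_{i,\varepsilon}+\tilde u_{0,i}}\bigr)\,dz,
\]
and here I would substitute the decomposition \eqref{e060}, the Taylor expansion \eqref{e015} of $H_{i,l,\varepsilon}(\cdot,0)$, and the pointwise estimates in Propositions \ref{pro41} and \ref{pro42}. Because $V_{i',l,\varepsilon}$ is radial about the origin and $z\mapsto z$ is odd, the leading radial contribution vanishes exactly; what survives is controlled by $O(\mu_{l,\varepsilon}^{-2}|\nabla f_{i,l,\varepsilon}(x_{l,\varepsilon})|+\mu_{l,\varepsilon}^{-3}+\mu_{l,\varepsilon}^{-1}e^{\beta_{l,\varepsilon}})$, all of which are dominated by $|\tilde M_{i,l,\varepsilon}-m_{i,l,\varepsilon}|$ via \eqref{e053}.

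Having obtained $u_{i,\varepsilon}-\bar u_{i,\varepsilon}=\sum_{l}m_{i,l,\varepsilon}G(x_{l,\varepsilon},x)+O(|\tilde M_{i,l,\varepsilon}-m_{i,l,\varepsilon}|)$, I would then substitute $m_{i,l,\varepsilon}=\tilde M_{i,l,\varepsilon}+O(|\tilde M_{i,l,\varepsilon}-m_{i,l,\varepsilon}|)$ from Lemma \ref{lem42}, which produces the stated identity. The $C^{1}$ assertion follows by differentiating the Green representation in $x$ and repeating the argument with $\nabla_{x}G(y,x)$ in place of $G(y,x)$; the extra derivative does not affect any of the above decay rates since $x$ is uniformly bounded away from every $x_{l,\varepsilon}$. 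The main obstacle is the linear Taylor term: the naive bound $|y-x_{l,\varepsilon}|\lesssim\mu_{l,\varepsilon}^{-1}$ yields only the $O(\mu_{l,\varepsilon}^{-1+o(1)})$ remainder of Lemma \ref{lem26}, and the essential gain comes from the approximate radial symmetry of $(u_{1,\varepsilon},u_{2,\varepsilon})$ about the shifted center $x_{l,\varepsilon}^{\ast}$ (which was introduced precisely to cancel the first-order asymmetry) together with the refined pointwise control on $\tilde\eta_{i,l,\varepsilon}$.
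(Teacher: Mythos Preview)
Your plan is correct and matches the paper's approach: the paper gives no detailed proof for this lemma, only the one-sentence remark ``By Propositions \ref{pro41} and \ref{pro42} and Lemma \ref{lem42}, we can improve Lemma \ref{lem26} as follows,'' and your outline is precisely the natural way to carry this out---rerun the Green representation argument of Lemma \ref{lem26}, Taylor expand $G(y,x)$ about each $x_{l,\varepsilon}$, and exploit the approximate radial symmetry of the rescaled bubble (encoded in Propositions \ref{pro41}, \ref{pro42} and the choice of $x_{l,\varepsilon}^*$) to kill the linear Taylor term that was responsible for the coarse $O(\mu^{-1+o(1)})$ remainder before.

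One minor caution on phrasing: when you write that a term such as $O(\mu_{l,\varepsilon}^{-2})$ ``by Lemma \ref{lem42} is absorbed in $|\tilde M_{i,l,\varepsilon}-m_{i,l,\varepsilon}|$,'' note that \eqref{e053} is an \emph{upper} bound on $|\tilde M-m|$, not a lower bound, so it does not literally give $\mu^{-2}=O(|\tilde M-m|)$. The paper's notation $O(|\tilde M-m|)$ in the lemma is really shorthand for ``an error of the same type as the right-hand side of \eqref{e053},'' namely $O(\mu_{j,\varepsilon}^{-2}\ln\mu_{j,\varepsilon}|Df_{i,j,\varepsilon}(x_{j,\varepsilon})|^{2}+\mu_{j,\varepsilon}^{-2}+e^{\beta_{j,\varepsilon}})$; this is how the bound is actually used downstream (Lemma \ref{lemma53}, Proposition after it, and \eqref{e020}). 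With that reading your argument goes through; just be aware that the quadratic Taylor remainder and the $\nabla f$-contribution to the linear term may pick up a harmless $\ln\mu_{l,\varepsilon}$ factor, which the paper also sweeps into this shorthand.
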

%
%
In the next lemma, we show that the heights of the bubbles are comparable.

\begin{lem}\label{lemma53}
It holds
\begin{equation}\label{e078}
\begin{split}
-\beta_{j,\varepsilon}&=8\pi\gamma(x_{j,\varepsilon},x_{j,\varepsilon})+8\pi \sum_{l\not=j} G(x_{j,\varepsilon},x_{l,\varepsilon})
-I_{1,j,\varepsilon}-4\ln \varepsilon+\frac{1}{|\Omega|}\int_{\Omega}{u_{1,\varepsilon}}+o(1)\\  
&= 8\pi\gamma(x_{j,\varepsilon},x_{j,\varepsilon})+8\pi \sum_{l\not=j} G(x_{j,\varepsilon},x_{l,\varepsilon})
-I_{2,j,\varepsilon}-4\ln \varepsilon+\frac{1}{|\Omega|}\int_{\Omega}{u_{2,\varepsilon}}+o(1)  
\end{split}
\end{equation}

\end{lem}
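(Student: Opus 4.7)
The plan is a matched asymptotic expansion of $u_{i,\varepsilon}$ on a circle $|x-x_{j,\varepsilon}|=\delta$, with $\delta>0$ small but fixed, so that $\mu_{j,\varepsilon}|x-x_{j,\varepsilon}|=\mu_{j,\varepsilon}\delta\to\infty$ while $x$ stays uniformly away from all other blow-up points. On this circle two representations for $u_{i,\varepsilon}(x)$ are simultaneously available: the global Green's function representation from Lemma \ref{lem44}, and the local representation in terms of the approximating Liouville profile $U^*_{i,j,\varepsilon}$ plus the error $\eta_{i,j,\varepsilon}$. Equating the two and comparing the $x$-independent pieces should produce \eqref{e078}.

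For the outer piece, Lemma \ref{lem44} combined with $G(x_{j,\varepsilon},x)=-\frac{1}{2\pi}\ln|x-x_{j,\varepsilon}|+\gamma(x,x_{j,\varepsilon})$ gives, for $x\in\partial B_\delta(x_{j,\varepsilon})$,
\begin{equation*}
u_{i,\varepsilon}(x)=\bar u_i-\frac{\tilde M_{i,j,\varepsilon}}{2\pi}\ln|x-x_{j,\varepsilon}|+\tilde M_{i,j,\varepsilon}\gamma(x,x_{j,\varepsilon})+\sum_{l\neq j}\tilde M_{i,l,\varepsilon}G(x_{l,\varepsilon},x)+o(1),
\end{equation*}
where $\bar u_i=\frac{1}{|\Omega|}\int_\Omega u_{i,\varepsilon}$. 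For the inner piece, definition \eqref{e060} of $\eta_{i,j,\varepsilon}$ and Proposition \ref{pro41} (which, for $\tau$ small, yields $\eta_{i,j,\varepsilon}(x)=o(1)$ on the circle) give
\begin{equation*}
u_{i,\varepsilon}(x)=\beta_{j,\varepsilon}+U^*_{i,j,\varepsilon}(x)+M_{i,j,\varepsilon}(\gamma(x,x_{j,\varepsilon})-\gamma(x_{j,\varepsilon},x_{j,\varepsilon}))+\sum_{l\neq j}M_{i,l,\varepsilon}(G(x,x_{l,\varepsilon})-G(x_{j,\varepsilon},x_{l,\varepsilon}))+o(1).
\end{equation*}
The expansion \eqref{e058} of $V_{i,j,\varepsilon}$ at infinity, together with $|x^*_{j,\varepsilon}-x_{j,\varepsilon}|=O(\mu_{j,\varepsilon}^{-2})$, then yields
\begin{equation*}
U^*_{i,j,\varepsilon}(x)=-\frac{\tilde M_{i,j,\varepsilon}}{2\pi}\ln\mu_{j,\varepsilon}-\frac{\tilde M_{i,j,\varepsilon}}{2\pi}\ln|x-x_{j,\varepsilon}|+I_{i,j,\varepsilon}+o(1),
\end{equation*}
the $O(|y|^{2-\tilde M_{k,j,\varepsilon}/(2\pi)})$ remainder being $O(\mu_{j,\varepsilon}^{-2})$ at scale $|y|\sim\mu_{j,\varepsilon}\delta$.

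Setting the two expressions equal on $\partial B_\delta(x_{j,\varepsilon})$, the singular terms $-\frac{\tilde M_{i,j,\varepsilon}}{2\pi}\ln|x-x_{j,\varepsilon}|$ cancel; since $M_{i,l,\varepsilon}=\tilde M_{i,l,\varepsilon}+o(1)$ by Lemma \ref{lem42}, all the remaining $x$-dependent regular contributions also match modulo $o(1)$. The $x$-independent balance reads
\begin{equation*}
\bar u_i=\beta_{j,\varepsilon}-\frac{\tilde M_{i,j,\varepsilon}}{2\pi}\ln\mu_{j,\varepsilon}+I_{i,j,\varepsilon}-\tilde M_{i,j,\varepsilon}\gamma(x_{j,\varepsilon},x_{j,\varepsilon})-\sum_{l\neq j}\tilde M_{i,l,\varepsilon}G(x_{j,\varepsilon},x_{l,\varepsilon})+o(1).
\end{equation*}
Substituting $\tilde M_{i,l,\varepsilon}=8\pi+o(1)$ and $\ln\mu_{j,\varepsilon}=-\ln\varepsilon+\beta_{j,\varepsilon}/2$ from \eqref{scal-fac}, so that $-\frac{\tilde M_{i,j,\varepsilon}}{2\pi}\ln\mu_{j,\varepsilon}=4\ln\varepsilon-2\beta_{j,\varepsilon}+o(1)$, and rearranging produces \eqref{e078} for each $i=1,2$. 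The delicate step is replacing $\tilde M_{i,j,\varepsilon}/(2\pi)$ by $4$ in front of the large quantity $\ln\mu_{j,\varepsilon}$: this incurs an error of size $|\tilde M_{i,j,\varepsilon}-8\pi|\ln\mu_{j,\varepsilon}$. It is precisely the refined rate $\tilde M_{i,j,\varepsilon}-8\pi=O((\mu_{j,\varepsilon}^{-2}\ln\mu_{j,\varepsilon}+e^{\beta_{j,\varepsilon}})^{1/2})$ supplied by Lemma \ref{lem42} that controls this error by $O(\mu_{j,\varepsilon}^{-1}(\ln\mu_{j,\varepsilon})^{3/2}+e^{\beta_{j,\varepsilon}/2}\ln\mu_{j,\varepsilon})=o(1)$; without this quantitative refinement the matching would only yield \eqref{e078} with an $O(1)$ residue rather than the required $o(1)$.
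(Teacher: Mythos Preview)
Your proposal is correct and follows essentially the same route as the paper: match the inner expansion (from the definition \eqref{e060}, Proposition \ref{pro41}, and the asymptotics \eqref{e058} of $V_{i,j,\varepsilon}$) against the outer Green's function expansion (Lemma \ref{lem44}) on $\partial B_\delta(x_{j,\varepsilon})$, then invoke the quantitative rate in Lemma \ref{lem42} to convert $\tilde M_{i,j,\varepsilon}$ to $8\pi$ in front of the divergent factor $\ln\mu_{j,\varepsilon}$. Your explicit isolation of the product $|\tilde M_{i,j,\varepsilon}-8\pi|\,\ln\mu_{j,\varepsilon}$ as the only nontrivial error, and why Lemma \ref{lem42} is exactly what is needed to kill it, makes the logic somewhat more transparent than the paper's one-line ``So, \eqref{e078} follows from Lemma \ref{lem42}.''
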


\begin{proof}
It follows from Proposition \ref{pro41} that
\begin{equation}\label{e079}
\begin{split}&u_{1,\varepsilon}(x)\\
=& \tilde{M}_{1,j,\varepsilon} (\gamma(x,x_{j,\varepsilon})-\gamma(x_{j,\varepsilon},x_{j,\varepsilon})  )
 +\sum_{l\not= j} \tilde{M}_{1,l,\varepsilon} (G(x,x_{l,\varepsilon})-G(x_{j,\varepsilon},x_{l,\varepsilon}))\\
 & +\beta_{j,\varepsilon}+\frac{\tilde{M}_{1,j,\varepsilon}}{2\pi}\ln\mu_{j,\varepsilon}\delta+I_{1,j,\varepsilon}+O(\mu_{j,\varepsilon}^{-2\tau}+e^{\tau'\beta_{j,\varepsilon}}) +O(|\tilde{M}_{i,j,\varepsilon}-m_{i,j,\varepsilon}|),
 \end{split}
\end{equation}
for $x\in \partial B_{\delta}(x_{j,\varepsilon})$.
By this and Lemma \ref{lem44}, we find
\begin{equation*}
\begin{split}
(1-\frac{\tilde{M}_{1,j,\varepsilon}}{4\pi})\beta_{j,\varepsilon}
&=\tilde{M}_{1,j,\varepsilon}  \gamma(x_{j,\varepsilon},x_{j,\varepsilon})    +\sum_{l\not= j}\tilde{M}_{1,l,\varepsilon}  G(x_{j,\varepsilon},x_{l,\varepsilon})-\frac{\tilde{M}_{1,j,\varepsilon}}{2\pi}\ln\varepsilon+\frac{1}{|\Omega|}\int_{\Omega}{u_{1,\varepsilon}}\\
&\quad +O(\mu_{j,\varepsilon}^{-2\tau}+e^{\tau'\beta_{j,\varepsilon}})+ O(|\tilde{M}_{i,j,\varepsilon}-m_{i,j,\varepsilon}|) .
 \end{split}
\end{equation*}
So, \eqref{e078} follows from Lemma \ref{lem42}.

\end{proof}

\begin{rem}\label{rem51}
In view of \eqref{e079}, we know that 
\begin{equation}\label{re6}
I_{1,j,\varepsilon}-I_{2,j,\varepsilon}=\frac{1}{|\Omega|}\int_{\Omega} (u_{1,\varepsilon}-u_{2,\varepsilon})+o(1),\quad j=1,\cdots, k.
\end{equation}
Furthermore, by the classification of $(V_{1,j,\varepsilon},V_{2,j,\varepsilon})$ and Lemma \ref{lem42}, we obtain
\begin{equation}\label{re7}
u_{0,1}(x_{j,\varepsilon})-u_{0,2}(x_{j,\varepsilon})=I_{2,j,\varepsilon}-I_{1,j,\varepsilon}+o(1),\quad j=1,\cdots, k. 
\end{equation}
Thus, by \eqref{re6} and \eqref{re7}, (1) of Theorem \ref{main1} is proved.

\end{rem}

With this lemma, we can obtain a estimate for $\eta_{i,j,\varepsilon}$ in $\Omega\setminus \cup_{j=1}^k B_{\delta}(x_{j,\varepsilon})$.

\begin{prop}
For  $x\in \Omega\setminus \cup_{j=1}^k B_{\delta}(x_{j,\varepsilon})$, it holds
\begin{equation}
|\eta_{i,j,\varepsilon}(x)|= O( \mu_{j,\varepsilon}^{-2\tau}+e^{\tau'\beta_{j,\varepsilon}} +|\tilde{M}_{i,j,\varepsilon}-m_{i,j,\varepsilon}|),\,\,i=1,2.
\end{equation}
\end{prop}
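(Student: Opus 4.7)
The strategy is to compute $\eta_{1,j,\varepsilon}(x)$ on $\Omega\setminus\cup_l B_\delta(x_{l,\varepsilon})$ by direct substitution, observe that all $x$-dependence cancels, and reduce the remaining constant to the pre-division identity extracted from the proof of Lemma~\ref{lemma53}. By symmetry it suffices to treat $i=1$; write $\alpha_\varepsilon:=\tfrac{1}{|\Omega|}\int_\Omega u_{1,\varepsilon}$.

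Fix $x$ outside every $B_\delta(x_{l,\varepsilon})$. Splitting $G(p,x)=-\tfrac{1}{2\pi}\ln|x-p|+\gamma(x,p)$, Lemma~\ref{lem44} gives
\[
u_{1,\varepsilon}(x)=\alpha_\varepsilon-\tfrac{\tilde M_{1,j,\varepsilon}}{2\pi}\ln|x-x_{j,\varepsilon}|+\tilde M_{1,j,\varepsilon}\gamma(x,x_{j,\varepsilon})+\sum_{l\ne j}\tilde M_{1,l,\varepsilon}G(x,x_{l,\varepsilon})+O(|\tilde M-m|).
\]
Since $|x^*_{j,\varepsilon}-x_{j,\varepsilon}|=O(\mu_{j,\varepsilon}^{-2})$ and $|x-x_{j,\varepsilon}|\ge\delta$, the argument $\mu_{j,\varepsilon}(x-x^*_{j,\varepsilon})$ has norm $\gtrsim\mu_{j,\varepsilon}\delta\to\infty$, and the asymptotic~\eqref{e058} yields
\[
U^*_{1,j,\varepsilon}(x)=-\tfrac{\tilde M_{1,j,\varepsilon}}{2\pi}\ln|x-x_{j,\varepsilon}|-\tfrac{\tilde M_{1,j,\varepsilon}}{2\pi}\ln\mu_{j,\varepsilon}+I_{1,j,\varepsilon}+O(\mu_{j,\varepsilon}^{-2+o(1)}).
\]
Plugging both into~\eqref{e060}, the $\ln|x-x_{j,\varepsilon}|$ and $\gamma(x,x_{j,\varepsilon})$ contributions cancel, and the symmetry $G(x,x_{l,\varepsilon})=G(x_{l,\varepsilon},x)$ annihilates the $l\ne j$ terms (the discrepancy between $M_{1,l,\varepsilon}$ used in~\eqref{e060} and $\tilde M_{1,l,\varepsilon}$ contributes at most $O(|\tilde M-m|)$ because the Green factors are bounded). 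What remains is $x$-independent:
\[
\eta_{1,j,\varepsilon}(x)=\alpha_\varepsilon+\tilde M_{1,j,\varepsilon}\gamma(x_{j,\varepsilon},x_{j,\varepsilon})+\sum_{l\ne j}\tilde M_{1,l,\varepsilon}G(x_{j,\varepsilon},x_{l,\varepsilon})-\beta_{j,\varepsilon}+\tfrac{\tilde M_{1,j,\varepsilon}}{2\pi}\ln\mu_{j,\varepsilon}-I_{1,j,\varepsilon}+\mathcal R,
\]
with $\mathcal R=O(\mu_{j,\varepsilon}^{-2+o(1)}+|\tilde M-m|)$.

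To bound this constant, I would use $\ln\mu_{j,\varepsilon}=-\ln\varepsilon+\tfrac12\beta_{j,\varepsilon}$ to rewrite $-\beta_{j,\varepsilon}+\tfrac{\tilde M_{1,j,\varepsilon}}{2\pi}\ln\mu_{j,\varepsilon}$ as $-(1-\tfrac{\tilde M_{1,j,\varepsilon}}{4\pi})\beta_{j,\varepsilon}-\tfrac{\tilde M_{1,j,\varepsilon}}{2\pi}\ln\varepsilon$, and then apply the pre-division identity
\[
\bigl(1-\tfrac{\tilde M_{1,j,\varepsilon}}{4\pi}\bigr)\beta_{j,\varepsilon}=\tilde M_{1,j,\varepsilon}\gamma(x_{j,\varepsilon},x_{j,\varepsilon})+\sum_{l\ne j}\tilde M_{1,l,\varepsilon}G(x_{j,\varepsilon},x_{l,\varepsilon})-\tfrac{\tilde M_{1,j,\varepsilon}}{2\pi}\ln\varepsilon+\alpha_\varepsilon-I_{1,j,\varepsilon}+\mathcal E
\]
extracted from the proof of Lemma~\ref{lemma53}. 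Term-by-term cancellation leaves $\eta_{1,j,\varepsilon}(x)=-\mathcal E+\mathcal R$. Tracking $\mathcal E$ back through that proof, it inherits the $O(\mu_{j,\varepsilon}^{-2\tau}+e^{\tau'\beta_{j,\varepsilon}})$ boundary error from Proposition~\ref{pro41} on $\partial B_\delta(x_{j,\varepsilon})$ together with the $O(|\tilde M-m|)$ error from Lemma~\ref{lem44}. Since $\mu_{j,\varepsilon}^{-2+o(1)}\le\mu_{j,\varepsilon}^{-2\tau}$ for $\tau\le 1/2$, the claimed estimate $|\eta_{i,j,\varepsilon}(x)|=O(\mu_{j,\varepsilon}^{-2\tau}+e^{\tau'\beta_{j,\varepsilon}}+|\tilde M_{i,j,\varepsilon}-m_{i,j,\varepsilon}|)$ follows.

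The main delicate point is to keep the coefficients $\tilde M_{1,j,\varepsilon}$ rather than replace them by $8\pi$ before the cancellation: using the collapsed form~\eqref{e078} directly would introduce a correction of size $(\tilde M_{1,j,\varepsilon}-8\pi)\ln\varepsilon$, which is not obviously small because $\ln\varepsilon\to-\infty$. The pre-division identity keeps the coefficients of $\ln\varepsilon$ and $\beta_{j,\varepsilon}$ matched on both sides, so the leading constants annihilate exactly and only the small residual $\mathcal E$ survives.
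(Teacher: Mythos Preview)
Your proof is correct and follows essentially the same route as the paper. The only difference is the order of the two key substitutions: the paper first expands $U^*_{1,j,\varepsilon}$ via \eqref{e058} and uses the pre-division identity from the proof of Lemma~\ref{lemma53} to rewrite $\eta_{i,j,\varepsilon}$ as $u_{i,\varepsilon}-\tfrac{1}{|\Omega|}\int_\Omega u_{i,\varepsilon}-\sum_l\tilde M_{i,l,\varepsilon}G(x,x_{l,\varepsilon})+O(\ldots)$, and then applies Lemma~\ref{lem44} to that combination directly, whereas you apply Lemma~\ref{lem44} to $u_{1,\varepsilon}$ first and then feed the resulting constant into the pre-division identity. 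Your closing remark about keeping the $\tilde M_{1,j,\varepsilon}$ coefficients intact (rather than collapsing to $8\pi$ via \eqref{e078}) before the cancellation is exactly the point: passing to \eqref{e078} would cost $(\tilde M_{1,j,\varepsilon}-8\pi)\beta_{j,\varepsilon}$, which is only $O(|\tilde M-m|^{1/2}|\beta_{j,\varepsilon}|)$ and not controlled by the stated bound.
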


\begin{proof}
By \eqref{e079}  
\begin{equation}
\begin{split}
\eta_{i,j,\varepsilon} =& u_{i,\varepsilon}(x)-\beta_{j,\varepsilon}-\sum_{l=1}^k \tilde{M}_{i,l,\varepsilon}  G(x,x_{l,\varepsilon}) -\frac{\tilde{M}_{i,j,\varepsilon}}{2\pi}\ln\mu_{j,\varepsilon}-I_{i,j,\varepsilon}+\tilde{M}_{i,j,\varepsilon} \gamma(x_{j,\varepsilon},x_{j,\varepsilon})\\
&+\sum_{l\not=j} \tilde{M}_{i,l,\varepsilon} G(x_{j,\varepsilon},x_{l,\varepsilon})+O(\mu_{j,\varepsilon}^{-2\tau} +e^{\tau' \beta_{j,\varepsilon}})+O(|\tilde{M}_{i,j,\varepsilon}-m_{i,j,\varepsilon}|)\\
=& u_{i,\varepsilon}-\frac{1}{|\Omega|}\int_{\Omega} u_{i,\varepsilon}-\sum_{l=1}^k \tilde{M}_{i,l,\varepsilon}  G(x,x_{l,\varepsilon})+O(\mu_{j,\varepsilon}^{-2\tau} +e^{\tau' \beta_{j,\varepsilon}})+O(|\tilde{M}_{i,j,\varepsilon}-m_{i,j,\varepsilon}|)\\
=&O(\mu_{j,\varepsilon}^{-2\tau} +e^{\tau' \beta_{j,\varepsilon}}+|\tilde{M}_{i,j,\varepsilon}-m_{i,j,\varepsilon}|),
 \end{split}
\end{equation}
where the last equality follows from Lemma \ref{lem44}.

\end{proof}

\begin{lem}\label{lem47}
It holds $$D f_{1,j}(q_j)=D f_{2,j}(q_j)=0,\quad j=1,\cdots,k.$$
\end{lem}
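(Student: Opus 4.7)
My plan is to derive a diagonal Pohozaev-type identity that, unlike the cross Pohozaev of Lemma \ref{lem32} (which only yields $Df_{1,j}(q_j)+Df_{2,j}(q_j)=0$), isolates $Df_{2,j}(q_j)$ by itself. Specifically, I would multiply the first equation of \eqref{e002} by $\partial_h u_{1,\varepsilon}$ and integrate on $B_\delta(x_{j,\varepsilon})$; the identity $Df_{1,j}(q_j)=0$ then follows by the symmetric computation using the second equation and multiplier $\partial_h u_{2,\varepsilon}$. The asymmetry in the Chern--Simons coupling $e^{u_{2,\varepsilon}+u_{0,2}}(1-e^{u_{1,\varepsilon}+u_{0,1}})$---the exponential factor involves $u_{2,\varepsilon}$ while the diagonal multiplier involves $u_{1,\varepsilon}$---is precisely what will produce $Df_{2,j}$ (rather than $Df_{1,j}$ or their sum) in the surviving identity.

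On the boundary side, integration by parts converts $\int\Delta u_{1,\varepsilon}\,\partial_h u_{1,\varepsilon}$ into $\int_{\partial B_\delta(x_{j,\varepsilon})}[(\partial_\nu u_{1,\varepsilon})(\partial_h u_{1,\varepsilon})-\tfrac{\nu_h}{2}|\nabla u_{1,\varepsilon}|^2]\,dS$. Using the Green's representation $u_{1,\varepsilon}(x)=-\tfrac{\tilde M_{1,j,\varepsilon}}{2\pi}\ln|x-x_{j,\varepsilon}|+R_{1,\varepsilon}(x)$ from Lemma \ref{lem44}, the standard Liouville-type Pohozaev computation gives this equals $-\tilde M_{1,j,\varepsilon}\,\partial_h R_{1,\varepsilon}(x_{j,\varepsilon})+o(1)$, where on the flat torus $\partial_h R_{1,\varepsilon}(x_{j,\varepsilon})=\sum_{l\ne j}\tilde M_{1,l,\varepsilon}\,\partial_h G(x_{j,\varepsilon},x_{l,\varepsilon})+O(\mu_{j,\varepsilon}^{-2})$ (using $D_1\gamma(q,q)=0$ on the torus and $D\eta_{1,j,\varepsilon}(x_{j,\varepsilon})=O(\mu_{j,\varepsilon}^{-2})$, which is built into the choice of $x_{j,\varepsilon}^{*}$).

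For the interior side, the quadratic contribution $\frac{1}{\varepsilon^2}\int e^{u_{1,\varepsilon}+u_{2,\varepsilon}+u_{0,1}+u_{0,2}}\partial_h u_{1,\varepsilon}$ is $O(e^{\beta_{j,\varepsilon}})=o(1)$ (its leading part after rescaling is $e^{\beta_{j,\varepsilon}}\int e^{V_{1,j,\varepsilon}+V_{2,j,\varepsilon}}\partial_h V_{1,j,\varepsilon}\,dy=0$ by radial symmetry), so the main task is to evaluate $-\tfrac{1}{\varepsilon^2}\int_{B_\delta(x_{j,\varepsilon})} e^{u_{2,\varepsilon}+u_{0,2}}\,\partial_h u_{1,\varepsilon}\,dx$. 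Substituting
\[
\tfrac{1}{\varepsilon^{2}}e^{u_{2,\varepsilon}+u_{0,2}}(x)=\mu_{j,\varepsilon}^{2}e^{u_{0,2}(x_{j,\varepsilon})}e^{V_{2,j,\varepsilon}(\mu_{j,\varepsilon}(x-x_{j,\varepsilon}^{*}))}e^{f_{2,j,\varepsilon}(x)+\eta_{2,j,\varepsilon}(x)},\qquad \partial_h u_{1,\varepsilon}=\mu_{j,\varepsilon}\partial_h V_{1,j,\varepsilon}+\partial_h R_{1,\varepsilon},
\]
rescaling $y=\mu_{j,\varepsilon}(x-x_{j,\varepsilon})$, and Taylor expanding $e^{f_{2,j,\varepsilon}}=1+Df_{2,j,\varepsilon}(x_{j,\varepsilon})\cdot(x-x_{j,\varepsilon})+O(|x-x_{j,\varepsilon}|^{2})$, the leading $O(\mu_{j,\varepsilon})$ piece vanishes by radial symmetry. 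The two surviving $O(1)$ pieces are the smooth contribution $\tilde M_{1,j,\varepsilon}\,\partial_h R_{1,\varepsilon}(x_{j,\varepsilon})$ (from $\int e^{u_{0,2}(x_{j,\varepsilon})+V_{2,j,\varepsilon}}\,dy=\tilde M_{1,j,\varepsilon}$) and the linear $f$-contribution $-\tilde M_{1,j,\varepsilon}\,\partial_h f_{2,j,\varepsilon}(x_{j,\varepsilon})$, based on the key integral identity
\[
\int_{\mathbb R^{2}} y_l\,\partial_h V_{1,j,\varepsilon}\,e^{u_{0,2}(x_{j,\varepsilon})+V_{2,j,\varepsilon}}\,dy=-\delta_{lh}\,\tilde M_{1,j,\varepsilon},
\]
which follows from the radial ODE $(rV_{1,j,\varepsilon}'(r))'=-r\,e^{u_{0,2}(x_{j,\varepsilon})+V_{2,j,\varepsilon}(r)}$ together with $\int (rV_{1,j,\varepsilon}')(rV_{1,j,\varepsilon}')'\,dr=\tfrac{1}{2}\bigl(\tilde M_{1,j,\varepsilon}/(2\pi)\bigr)^{2}$. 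Equating the boundary and interior sides, the $\partial_h R_{1,\varepsilon}$ contributions cancel and I obtain $\tilde M_{1,j,\varepsilon}\,\partial_h f_{2,j,\varepsilon}(x_{j,\varepsilon})=o(1)$; passing $\varepsilon\to 0$ gives $Df_{2,j}(q_j)=0$.

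The hard part will be the uniform control of the $o(1)$ errors across the bubble region of rescaled radius $\mu_{j,\varepsilon}\delta$: in particular the Taylor remainder of $e^{f_{2,j,\varepsilon}+\eta_{2,j,\varepsilon}}$, the shift error from replacing $V_{2,j,\varepsilon}(\mu_{j,\varepsilon}(x-x_{j,\varepsilon}^{*}))$ by $V_{2,j,\varepsilon}(\mu_{j,\varepsilon}(x-x_{j,\varepsilon}))$ (which costs $O(\mu_{j,\varepsilon}^{-1})$ in the exponent), and especially the contribution $\mu_{j,\varepsilon}\int e^{u_{0,2}+V_{2,j,\varepsilon}}\tilde\eta_{2,j,\varepsilon}\partial_h V_{1,j,\varepsilon}\,dy$ from the non-radial part of $\eta_{2,j,\varepsilon}$, which a priori could be comparable in size to the surviving $\partial_h f_{2,j,\varepsilon}$ term. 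The sharp $C^{0}$ bounds of Propositions \ref{pro41} and \ref{pro42} for $\tau$ sufficiently close to $1$, combined with the convergence rate $\tilde M_{i,j,\varepsilon}-8\pi=o(1)$ from Lemma \ref{lem42} and the estimate $|x_{j,\varepsilon}-x_{j,\varepsilon}^{*}|=O(\mu_{j,\varepsilon}^{-2})$, are indispensable for pushing all such error terms below $o(1)$.
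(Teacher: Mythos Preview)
Your approach is genuinely different from the paper's, and as written it contains a circularity that prevents it from closing.

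\textbf{The gap.} The crucial error you flag yourself,
\[
\mu_{j,\varepsilon}\int e^{u_{0,2}(x_{j,\varepsilon})+V_{2,j,\varepsilon}}\,\tilde\eta_{2,j,\varepsilon}\,\partial_h V_{1,j,\varepsilon}\,dy,
\]
cannot be made $o(1)$ with the estimates actually available at this point. Proposition~\ref{pro41} is stated and proved only for $0<\tau\le\tfrac12$; with $\tau=\tfrac12$ one gets $|\tilde\eta_{2,j,\varepsilon}|\le C(1+|y|)^{1/2}(\mu_{j,\varepsilon}^{-1}+e^{\tau'\beta_{j,\varepsilon}})$, so the integral above is $O(1)+O(\mu_{j,\varepsilon}e^{\tau'\beta_{j,\varepsilon}})$, which is of the same order as the main term $\partial_h f_{2,j,\varepsilon}(x_{j,\varepsilon})$ (and the second piece is not a priori small, since no relation between $e^{\beta_{j,\varepsilon}}$ and $\mu_{j,\varepsilon}^{-2}$ is known yet). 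The extension of Proposition~\ref{pro41} to $\tau$ close to $1$ that you invoke appears in the paper \emph{only in the Remark following Lemma~\ref{lem47}}, and its proof uses \eqref{e080}, i.e.\ $|Df_{i,j,\varepsilon}(x_{j,\varepsilon})|=O(\mu_{j,\varepsilon}^{-1}+e^{\beta_{j,\varepsilon}})$, which is precisely a consequence of Lemma~\ref{lem47}. So the input you need to control this error is the output of the lemma you are proving.

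\textbf{What the paper does instead.} The paper does not use a translation-type Pohozaev. It pairs $(\eta_{1,j,\varepsilon},\eta_{2,j,\varepsilon})$ against the \emph{scaling} kernel $(\psi^*_{1,j},\psi^*_{2,j})=(rV_{1,j,\varepsilon}'+2,\,rV_{2,j,\varepsilon}'+2)$ via Green's identity on each $\Omega_j$ and then sums over $j$. Two features make this work where your argument stalls. First, the interior integrals produce $|Df_{i,j,\varepsilon}(x_{j,\varepsilon})|^{2}\,\mu_{j,\varepsilon}^{-2}\ln\mu_{j,\varepsilon}$ with a \emph{positive} coefficient (from $\int r^{2}e^{V_{i}}(rV_{i}'+2)$, which is $\sim\ln\mu_{j,\varepsilon}$), together with the strictly positive term $B_{j,\varepsilon}e^{\beta_{j,\varepsilon}}$ coming from the coupling $e^{u_1+u_2}$. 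Second, summing over $j$ and using $\sum_j M_{i,j,\varepsilon}=8k\pi$ converts the boundary side into $-\sum_{i,j}(\tilde M_{i,j,\varepsilon}-8\pi)^{2}/(\tilde M_{i,j,\varepsilon}-4\pi)$, again nonnegative after moving it across. The resulting identity \eqref{e025} has every leading term nonnegative and right-hand side $o(e^{\beta_{j,\varepsilon}})+O(\mu_{j,\varepsilon}^{-2})$; a pure sign argument then forces $|Df_{i,j,\varepsilon}(x_{j,\varepsilon})|^{2}\ln\mu_{j,\varepsilon}=O(1)$ and simultaneously $e^{\beta_{j,\varepsilon}}=O(\mu_{j,\varepsilon}^{-2})$. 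No sharp control of the nonradial part of $\eta$ beyond $\tau=\tfrac12$ is needed, because the squares absorb the slack.

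In short, the paper trades your linear translation identity (which would require $\tau$ near $1$) for a quadratic scaling identity with a sign structure, and that is what breaks the circularity.
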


\begin{proof}
Since $(u_{1,\varepsilon},u_{2,\varepsilon})$ is doubly periodic on $\partial \Omega$, by integrating \eqref{e002}, we have
\begin{equation}\label{e009}
\begin{cases}
8k\pi=\frac{1}{\varepsilon^2}\int_{\Omega}e^{u_{2,\varepsilon}+u_{0,2}}(1-e^{u_{1,\varepsilon}+u_{0,1}})=\sum_{j=1}^k \frac{1}{\varepsilon^2}\int_{\Omega_j}e^{u_{2,\varepsilon}+u_{0,2}}(1-e^{u_{1,\varepsilon}+u_{0,1}}),\\
8k\pi=\frac{1}{\varepsilon^2}\int_{\Omega}e^{u_{1,\varepsilon}+u_{0,1}}(1-e^{u_{2,\varepsilon}+u_{0,2}})=\sum_{j=1}^k \frac{1}{\varepsilon^2}\int_{\Omega_j}e^{u_{1,\varepsilon}+u_{0,1}}(1-e^{u_{2,\varepsilon}+u_{0,2}}).
\end{cases}
\end{equation}
Recall that
\begin{equation}\label{e010}
\begin{cases}
\tilde{M}_{1,j,\varepsilon}=\frac{1}{\varepsilon^2}\int_{ \Omega_j} e^{u_{0,2}(x_{j,\varepsilon})+\beta_{j,\varepsilon}+U^*_{2,j,\varepsilon}}+\frac{1}{\varepsilon^2}\int_{\mathbb{R}^2\setminus\Omega_j} e^{u_{0,2}(x_{j,\varepsilon})+\beta_{j,\varepsilon}+U^*_{2,j,\varepsilon}},\\
\tilde{M}_{2,j,\varepsilon}=\frac{1}{\varepsilon^2}\int_{ \Omega_j} e^{u_{0,1}(x_{j,\varepsilon})+\beta_{j,\varepsilon}+U^*_{1,j,\varepsilon}}+\frac{1}{\varepsilon^2}\int_{\mathbb{R}^2\setminus\Omega_j} e^{u_{0,1}(x_{j,\varepsilon})+\beta_{j,\varepsilon}+U^*_{1,j,\varepsilon}}.
\end{cases}
\end{equation}
By \eqref{e060}, we have
\begin{equation}\label{e063}
\begin{cases}
\Delta \eta_{1,j,\varepsilon}=\Delta u_{1, \varepsilon}-\Delta U_{1,j,\varepsilon}^* -\sum_{l=1}^k  {M}_{1,l,\varepsilon}\\
\Delta \eta_{2,j,\varepsilon}=\Delta u_{2, \varepsilon}-\Delta U_{2,j,\varepsilon}^* -\sum_{l=1}^k  {M}_{2,l,\varepsilon}
\end{cases}\quad x\in \Omega_j.
\end{equation}
By integrating \eqref{e063} over $\Omega_j$, we have
\begin{equation*}
\begin{cases}
\int_{\partial \Omega_j}\frac{\partial \eta_{1,j,\varepsilon}}{\partial\nu}=\tilde{M}_{1,j,\varepsilon}-\frac{1}{\varepsilon^2}\int_{\mathbb{R}^2\setminus\Omega_j} e^{u_{0,2}(x_{j,\varepsilon})+\beta_{j,\varepsilon}+U_{2,j,\varepsilon}^*}-M_{1,j,\varepsilon},  \\
\int_{\partial \Omega_j}\frac{\partial \eta_{2,j,\varepsilon}}{\partial\nu}=\tilde{M}_{2,j,\varepsilon}-\frac{1}{\varepsilon^2}\int_{\mathbb{R}^2\setminus\Omega_j} e^{u_{0,1}(x_{j,\varepsilon})+\beta_{j,\varepsilon}+U_{1,j,\varepsilon}^*}-M_{2,j,\varepsilon},
\end{cases}
\end{equation*}
and
\begin{equation}\label{e022}
\begin{cases}
\sum_{j=1}^k\left(\int_{\partial \Omega_j}\frac{\partial \eta_{1,j,\varepsilon}}{\partial\nu}+ \frac{1}{\varepsilon^2}\int_{\mathbb{R}^2\setminus\Omega_j} e^{u_{0,2}(x_{j,\varepsilon})+\beta_{j,\varepsilon}+U_{2,j,\varepsilon}^*}\right)=\sum_{j=1}^{k}( \tilde{M}_{1,j,\varepsilon} -M_{1,j,\varepsilon}  ),\\
 \sum_{j=1}^k\left(\int_{\partial \Omega_j}\frac{\partial \eta_{2,j,\varepsilon}}{\partial\nu}+ \frac{1}{\varepsilon^2}\int_{\mathbb{R}^2\setminus\Omega_j} e^{u_{0,1}(x_{j,\varepsilon})+\beta_{j,\varepsilon}+U^*_{1,j,\varepsilon}}\right)=\sum_{j=1}^{k}( \tilde{M}_{2,j,\varepsilon} -M_{2,j,\varepsilon}  ).
\end{cases}
\end{equation}
Set
\begin{align*}
\psi^*_{1,j}=\mu_{j,\varepsilon}(x-x_{j,\varepsilon}^*){V_{1,j,\varepsilon}}'(\mu_{j,\varepsilon}(x-x_{j,\varepsilon}^*))+2,\\
\psi^*_{2,j}=\mu_{j,\varepsilon}(x-x_{j,\varepsilon}^*){V_{2,j,\varepsilon}}'(\mu_{j,\varepsilon}(x-x_{j,\varepsilon}^*))+2 ,
\end{align*} then direct computation shows
\begin{equation}\label{e011}
\begin{cases}
\Delta \psi^*_{1,j}+\frac{1}{\varepsilon^2}e^{u_{0,2}(x_{j,\varepsilon})+\beta_{j,\varepsilon}+U^*_{2,j,\varepsilon}}\psi^*_{2,j}=0\\
\Delta \psi^*_{2,j}+\frac{1}{\varepsilon^2}e^{u_{0,1}(x_{j,\varepsilon})+\beta_{j,\varepsilon}+U^*_{1,j,\varepsilon}}\psi^*_{1,j}=0
\end{cases}
\end{equation}
Similarly, we set 
\begin{align*}
\psi_{1,j}=\mu_{j,\varepsilon}(x-x_{j,\varepsilon} ){V_{1,j,\varepsilon}}'(\mu_{j,\varepsilon}(x-x_{j,\varepsilon} ))+2,\\
\psi_{2,j}=\mu_{j,\varepsilon}(x-x_{j,\varepsilon} ){V_{2,j,\varepsilon}}'(\mu_{j,\varepsilon}(x-x_{j,\varepsilon} ))+2.
\end{align*}
By Green's identity  and \eqref{e011}, we obtain
\begin{equation}\label{e012}
\begin{split}
&\sum_{j=1}^k\int_{\partial \Omega_j}\psi^*_{2,j}\frac{\partial \eta_{1,j,\varepsilon}}{\partial \nu}+\psi^*_{1,j}\frac{\partial \eta_{2,j,\varepsilon}}{\partial \nu}-\eta_{1,j,\varepsilon}\frac{\partial \psi^*_{2,j}}{\partial \nu}-\eta_{2,j,\varepsilon}\frac{\partial \psi^*_{1,j}}{\partial \nu}
\\
=&\sum_{j=1}^k\int_{\Omega_j} \psi^*_{2,j}\Delta\eta_{1,j,\varepsilon}-\eta_{1,j,\varepsilon}\Delta\psi^*_{2,j}+\psi^*_{1,j}\Delta\eta_{2,j,\varepsilon}-\eta_{2,j,\varepsilon}\Delta\psi^*_{1,j}\\
=&\sum_{j=1}^k \int_{\Omega_j}\Big(-\frac{1}{\varepsilon^2}e^{u_{0,2}(x_{j,\varepsilon})+\beta_{\varepsilon}+U^*_{2,j,\varepsilon}}
(H_{2,j,\varepsilon}(x,\eta_{2,j,\varepsilon}) -\eta_{2,j,\varepsilon})\psi^*_{2,j}\\
&\quad -\frac{1}{\varepsilon^2}e^{u_{0,1}(x_{j,\varepsilon})+\beta_{j,\varepsilon}+U^*_{1,j,\varepsilon}}
(H_{1,j,\varepsilon}(x,\eta_{1,j,\varepsilon}) -\eta_{1,j,\varepsilon})\psi^*_{1,j} +\frac{1}{\varepsilon^2}e^{\sum_{i=1}^2(u_{i,\varepsilon}+u_{0,i})}(\psi^*_{1,j}+\psi^*_{2,j})\Big).
\end{split}
\end{equation}

We estimate the right hand side of \eqref{e012}. As in \eqref{e062}, we have
\begin{equation}\label{e013}
\begin{split}
&\frac{1}{\varepsilon^2}\int_{B_{\delta}(x_{j,\varepsilon})} e^{u_{0,2}(x_{j,\varepsilon})+\beta_{\varepsilon}+U^*_{2,j,\varepsilon}} H_{2,j,\varepsilon}(x,0)\psi^*_{2,j}\\
=& \frac{1}{\varepsilon^2}\int_{B_{\delta}(x_{j,\varepsilon})} e^{u_{0,2}(x_{j,\varepsilon})+\beta_{\varepsilon}+U_{2,j,\varepsilon}} H_{2,j,\varepsilon}(x,0)\psi_{2,j}\\
& + \frac{1}{\varepsilon^2}\int_{B_{\delta}(x_{j,\varepsilon})} e^{u_{0,2}(x_{j,\varepsilon})+\beta_{\varepsilon}+U_{2,j,\varepsilon}} (e^{U^*_{2,j,\varepsilon}-U_{2,j,\varepsilon}   }-1)   H_{2,j,\varepsilon}(x,0)(\psi^*_{2,j}-\psi_{2,j})\\
&+  \frac{1}{\varepsilon^2}\int_{B_{\delta}(x_{j,\varepsilon})} e^{u_{0,2}(x_{j,\varepsilon})+\beta_{\varepsilon}+U_{2,j,\varepsilon}}     H_{2,j,\varepsilon}(x,0)(\psi^*_{2,j}-\psi_{2,j})\\
&+\frac{1}{\varepsilon^2}\int_{B_{\delta}(x_{j,\varepsilon})} e^{u_{0,2}(x_{j,\varepsilon})+\beta_{\varepsilon}+U_{2,j,\varepsilon}}  (e^{U^*_{2,j,\varepsilon}-U_{2,j,\varepsilon}   }-1)    H_{2,j,\varepsilon}(x,0) \psi_{2,j} \\
=&-C_{2,j,\varepsilon} |D f_{2,j,\varepsilon}(x_{j,\varepsilon})|^2\mu_{j,\varepsilon}^{-2}\ln\mu_{j,\varepsilon}+O(\mu_{j,\varepsilon}^{-2})  
\end{split}
\end{equation}
for some positive constant $C_{2,j,\varepsilon}$ and $\lim_{\varepsilon\to 0} C_{2,j,\varepsilon}>0$.

By \eqref{e014} and Proposition \ref{pro41}, we obtain
\begin{equation}\label{e016}
\begin{split}
&\frac{1}{\varepsilon^2}\int_{B_{\delta}(x_{j,\varepsilon})} e^{u_{0,2}(x_{j,\varepsilon})+\beta_{j,\varepsilon}+U^*_{2,j,\varepsilon}}(H_{2,j,\varepsilon}(x,\eta_{2,j,\varepsilon})-\eta_{2,j,\varepsilon})\psi^*_{2,j,\varepsilon}\\
=&-C_{2,j,\varepsilon} |D f_{2,j,\varepsilon}(x_{j,\varepsilon})|^2\mu_{j,\varepsilon}^{-2}\ln\mu_{j,\varepsilon}+ O(\mu_{j,\varepsilon}^{-2})+O((\mu^{-2\tau}+e^{\tau'\beta_{j,\varepsilon}})^2).
\end{split}
\end{equation}
Similarly,
\begin{equation}\label{e017}
\begin{split}
&\frac{1}{\varepsilon^2}\int_{B_{\delta}(x_{j,\varepsilon})} e^{u_{0,1}(x_{j,\varepsilon})+\beta_{\varepsilon}+U^*_{1,j,\varepsilon}}(H_{1,j,\varepsilon}(x,\eta_{1,j,\varepsilon})-\eta_{1,j,\varepsilon})\psi_{1,j,\varepsilon}\\
=&-C_{1,j,\varepsilon}|D f_{1,j,\varepsilon}(x_{j,\varepsilon})|^2\mu_{j,\varepsilon}^{-2}\ln\mu_{j,\varepsilon}+O(\mu_{j,\varepsilon}^{-2})+O((\mu^{-2\tau}+e^{\tau'\beta_{j,\varepsilon}})^2),
\end{split}
\end{equation}
where $\lim_{\varepsilon\to 0} C_{1,j,\varepsilon}>0$.
As the calculation of \eqref{e013}, we have
\begin{equation}\label{e018}
\frac{1}{\varepsilon^2}\int_{  \Omega_j } e^{\sum_{i=1}^2(u_{i,\varepsilon}+u_{0,i})}(\psi^*_{1,j} +\psi^*_{2,j}  )=B_{j,\varepsilon}e^{\beta_{j,\varepsilon}}
\end{equation}
for some positive constant $B_{j,\varepsilon}$ with $\lim_{\varepsilon\to 0} B_{j,\varepsilon}>0$..
Thus, we conclude that
\begin{equation}\label{e019}
\begin{split}
&  \sum_{j=1}^k\int_{\Omega_j} \psi^*_{2,j}\Delta\eta_{1,j,\varepsilon}-\eta_{1,j,\varepsilon}\Delta\psi^*_{2,j}+\psi^*_{1,j}\Delta\eta_{2,j,\varepsilon}-\eta_{2,j,\varepsilon}\Delta\psi^*_{1,j}\\
=&\sum_{j=1}^k \left(B_{j,\varepsilon} e^{\beta_{j,\varepsilon}}+C_{1,j,\varepsilon}|D f_{1,j,\varepsilon}(x_{j,\varepsilon})|^2\mu_{j,\varepsilon}^{-2}\ln\mu_{j,\varepsilon} + C_{2,j,\varepsilon} |D f_{2,j,\varepsilon}(x_{j,\varepsilon})|^2\mu_{j,\varepsilon}^{-2}\ln\mu_{j,\varepsilon}\right)\\
&+O(\mu_{j,\varepsilon}^{-2})+O((\mu^{-2\tau}+e^{\tau\beta_{j,\varepsilon}})^2)
\end{split}
\end{equation}

On the other hand, by \eqref{e022} and Lemma \ref{lem42},
\begin{equation}\label{e020}
\begin{split}
&\sum_{j=1}^k\int_{\partial \Omega_j}\psi^*_{2,j}\frac{\partial \eta_{1,j,\varepsilon}}{\partial \nu}+\psi^*_{1,j}\frac{\partial \eta_{2,j,\varepsilon}}{\partial \nu}-\eta_{1,j,\varepsilon}\frac{\partial \psi^*_{2,j}}{\partial \nu}-\eta_{2,j,\varepsilon}\frac{\partial \psi^*_{1,j}}{\partial \nu}
\\
=& \sum_{j=1}^k \left( (-\frac{\tilde{M}_{2,j,\varepsilon}}{2\pi}+2)\int_{\partial \Omega_j}\frac{\partial \eta_{1,j,\varepsilon}}{\partial \nu}+ (-\frac{\tilde{M}_{1,j,\varepsilon}}{2\pi}+2)\int_{\partial \Omega_j}\frac{\partial \eta_{2,j,\varepsilon}}{\partial \nu}\right)+O(\mu_{j,\varepsilon}^{-2}(\mu_{j,\varepsilon}^{-2\tau}+e^{\tau'\beta_{\varepsilon}}  ))\\
=&-  \sum_{i=1,2}\sum_{j=1,\cdots,k}\frac{(\tilde{M}_{i,j,\varepsilon}-8\pi)^2}{\tilde{M}_{i,j,\varepsilon}-4\pi} +
O(|\tilde{M}_{i,j,\varepsilon}-8\pi||\tilde{M}_{i,j,\varepsilon}-m_{i,j,\varepsilon}|)+O(\mu_{j,\varepsilon}^{-2}(\mu_{j,\varepsilon}^{-2\tau}+e^{\tau'\beta_{\varepsilon}}  ))
 \\
=&- \sum_{i=1,2}\sum_{j=1,\cdots,k}\frac{(\tilde{M}_{i,j,\varepsilon}-8\pi)^2}{\tilde{M}_{i,j,\varepsilon}-4\pi}+ O( |\tilde{M}_{i,j,\varepsilon}-m_{i,j,\varepsilon}|^{3/2})+O(\mu_{j,\varepsilon}^{-2}(\mu_{j,\varepsilon}^{-2\tau}+e^{\tau'\beta_{\varepsilon}}  )).
\end{split}
\end{equation}

Combing \eqref{e019} and \eqref{e020}, we obtain
\begin{equation}\label{e025}
\begin{split}
&\sum_{j=1}^k (B_{j,\varepsilon} e^{\beta_{j,\varepsilon}}+\sum_{i=1,2}C_{i,j,\varepsilon}|D f_{i,j,\varepsilon}(x_{j,\varepsilon})|^2\mu_{j,\varepsilon}^{-2}\ln\mu_{j,\varepsilon} ) +\sum_{i=1,2}\sum_{j=1,\cdots,k}\frac{(\tilde{M}_{i,j,\varepsilon}-8\pi)^2}{\tilde{M}_{i,j,\varepsilon}-4\pi}\\
=&  o(e^{\beta_{j,\varepsilon}})+O(\mu_{j,\varepsilon}^{-2}).
\end{split}
\end{equation}
Since  $B_{j,\varepsilon}$, $C_{1,j,\varepsilon}$ and $C_{2,j,\varepsilon}$ are positive, we obtain
\begin{equation}
|D f_{1,j,\varepsilon}(x_{j,\varepsilon})|^2=o(1)\mbox{  and  }|D f_{2,j,\varepsilon}(x_{j,\varepsilon})|=o(1).
\end{equation}
It follows that $$D f_{1,j}(q_j)=D f_{2,j}(q_j)=0,\,\,j=1,\cdots,k .$$

\end{proof}

\begin{rem}By Lemma \ref{lem42}   and  Lemma \ref{lem47}, we obtain, for $i=1,2,$ $j=1,\cdots,k$,
\begin{equation}\label{e080}
|D f_{i,j,\varepsilon}(x_{j,\varepsilon})|= O(\sum_{j=1,\cdots,k}|8\pi-M_{i,j,\varepsilon}|)
+O(|q_j-x_{j,\varepsilon}|)
=O(\mu_{j,\varepsilon}^{-1} +e^{\beta_{j,\varepsilon}}).
\end{equation}
Furthermore, applying  \eqref{e080} to \eqref{e112}, \eqref{re4} and \eqref{e113}, we can refine Proposition \ref{pro31} and show that
\begin{equation}\label{e081}
N_{\varepsilon}\leq C  \quad\text{  for  } 0<\tau<1\quad 0<\tau'<1.
\end{equation}
For convenience,   we set $\tau'=\tau$.
\end{rem}

With \eqref{e080} and \eqref{e081}, we can refine the estimate on \eqref{e012} and obtain the following estimate.

\begin{lem}\label{final-lem} It holds
\begin{equation*}
\mathcal{D}^{(2)}({\bf  q})\leq 0
\end{equation*}
\end{lem}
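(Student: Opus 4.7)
The plan is to refine the integration-by-parts identity \eqref{e012} that drove the proof of Lemma \ref{lem47}. All of the ingredients for the refinement are now available: the vanishing $Df_{i,j}(q_j)=0$ from Lemma \ref{lem47}, the quantitative bound $|Df_{i,j,\varepsilon}(x_{j,\varepsilon})|=O(\mu_{j,\varepsilon}^{-1}+e^{\beta_{j,\varepsilon}})$ from \eqref{e080}, and the improved pointwise estimate $|\tilde{\eta}_{i,j,\varepsilon}(x)|\leq C(1+|x|)^\tau(\mu_{j,\varepsilon}^{-2\tau}+e^{\tau\beta_{j,\varepsilon}})$ valid for every $\tau\in(0,1)$, see \eqref{e081}. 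With these inputs, the previously dominant logarithmic terms $C_{i,j,\varepsilon}|Df_{i,j,\varepsilon}|^2\mu_{j,\varepsilon}^{-2}\ln\mu_{j,\varepsilon}$ that appeared in \eqref{e016}--\eqref{e017} collapse to $o(\mu_{j,\varepsilon}^{-2})$, so they no longer mask the sub-leading term that will be identified with $\mathcal{D}^{(2)}(\mathbf{q})$.

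The core step is to extract the precise leading $\mu_{j,\varepsilon}^{-2}$ coefficient of the bulk terms
\begin{equation*}
\mathcal{I}_{i,j,\varepsilon}:=\tfrac{1}{\varepsilon^2}\int_{\Omega_j}e^{u_{0,i}(x_{j,\varepsilon})+\beta_{j,\varepsilon}+U^{*}_{i,j,\varepsilon}}\bigl(H_{i,j,\varepsilon}(x,\eta_{i,j,\varepsilon})-\eta_{i,j,\varepsilon}\bigr)\psi^{*}_{i,j}\,dx.
\end{equation*}
First I would replace $(U^{*}_{i,j,\varepsilon},\psi^{*}_{i,j})$ by $(U_{i,j,\varepsilon},\psi_{i,j})$ up to errors $O(\mu_{j,\varepsilon}|x_{j,\varepsilon}-x_{j,\varepsilon}^{*}|)=O(\mu_{j,\varepsilon}^{-1})$ obtained from \eqref{e080}, then Taylor-expand
$H_{i,j,\varepsilon}(x,\eta_{i,j,\varepsilon})-\eta_{i,j,\varepsilon}=(e^{f_{i,j,\varepsilon}}-1)(1+\eta_{i,j,\varepsilon})+O(\eta_{i,j,\varepsilon}^{2})$,
and exploit the cancellation $\Delta f_{i,j}\equiv 0$ (a consequence of $N_i=2k$, $M_{i,l}=8\pi$ for all $l$, and the defining equation of $\gamma$), which makes the Hessian $D^2 f_{i,j}(q_j)$ trace-free. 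Combined with $Df_{i,j}(q_j)=0$, this makes the angular average $\int_{\partial B_r(q_j)}(e^{f_{i,j}}-1)\,d\sigma=O(r^4)$, which is precisely what legitimises the finite limit in \eqref{e052}. Substituting the far-field asymptotics \eqref{e058}--\eqref{e059} together with Lemma \ref{lemma53}, which converts $e^{I_{i,j,\varepsilon}+\beta_{j,\varepsilon}}/\varepsilon^{2}$ into $\rho_{i,j}/(e^{u_{0,i}(q_1)}\mu_{j,\varepsilon}^{2})$, I would obtain, on $\Omega_j\setminus B_\delta(x_{j,\varepsilon})$,
\begin{equation*}
\tfrac{1}{\varepsilon^2}e^{u_{0,i}(x_{j,\varepsilon})+\beta_{j,\varepsilon}+U_{i,j,\varepsilon}}\psi_{i,j}=-\frac{2\,\rho_{i,j}}{\mu_{j,\varepsilon}^{2}\,e^{u_{0,i}(q_1)}\,|x-q_j|^{4}}\bigl(1+o(1)\bigr).
\end{equation*}
Pairing this with the complementary tail $\frac{1}{\varepsilon^2}\int_{\mathbb{R}^2\setminus\Omega_j}e^{u_{0,i}(x_{j,\varepsilon})+\beta_{j,\varepsilon}+U^{*}_{i,j,\varepsilon}}$ that appears in the splitting of $\tilde{M}_{i,j,\varepsilon}$ in \eqref{e010}, and matching against the inner region $B_\delta(x_{j,\varepsilon})\setminus B_{R\mu_{j,\varepsilon}^{-1}}(x_{j,\varepsilon})$, produces precisely the two integrals that define $\mathcal{D}^{(2)}$:
\begin{equation*}
\sum_{j=1}^{k}\bigl(\mathcal{I}_{1,j,\varepsilon}+\mathcal{I}_{2,j,\varepsilon}\bigr)=-\frac{2}{\mu_{j,\varepsilon}^{2}}\mathcal{D}^{(2)}(\mathbf{q})+o(\mu_{j,\varepsilon}^{-2})+o(e^{\beta_{j,\varepsilon}}).
\end{equation*}

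Inserting this into \eqref{e019}--\eqref{e020} and exchanging the boundary terms for the non-negative expression $\sum_{i,j}(\tilde M_{i,j,\varepsilon}-8\pi)^2/(\tilde M_{i,j,\varepsilon}-4\pi)$ as in \eqref{e020}, the refined version of \eqref{e025} reads
\begin{equation*}
\sum_{j=1}^{k}B_{j,\varepsilon}e^{\beta_{j,\varepsilon}}+\sum_{i=1,2}\sum_{j=1}^{k}\frac{(\tilde{M}_{i,j,\varepsilon}-8\pi)^{2}}{\tilde{M}_{i,j,\varepsilon}-4\pi}+\frac{2}{\mu_{j,\varepsilon}^{2}}\mathcal{D}^{(2)}(\mathbf{q})=o(\mu_{j,\varepsilon}^{-2})+o(e^{\beta_{j,\varepsilon}}),
\end{equation*}
with $\liminf_{\varepsilon\to 0}B_{j,\varepsilon}>0$. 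Since the first two sums on the left are non-negative, multiplying through by $\mu_{j,\varepsilon}^{2}$ and letting $\varepsilon\to 0$ forces $\mathcal{D}^{(2)}(\mathbf{q})\leq 0$.

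The main obstacle will be the computation of $\mathcal{I}_{i,j,\varepsilon}$ itself. Three features must be controlled simultaneously: (i) a careful inner--outer matching between the exact Liouville profile in the region $|x-x_{j,\varepsilon}|\lesssim\mu_{j,\varepsilon}^{-1}$ and its far-field approximation; (ii) the cancellation of the formally divergent tail $\int_{\mathbb R^2\setminus\Omega_j}|x-q_j|^{-4}dx$ arising from the Liouville energy normalization against the subtraction in \eqref{e052}, which must be done consistently for every admissible partition $\{\Omega_j\}$; and (iii) verifying, via $\Delta f_{i,j}\equiv 0$, that the resulting renormalized integral converges as $\delta\to 0$ and is independent of the partition, so that the coefficient is indeed $\mathcal{D}^{(2)}(\mathbf{q})$ as given by \eqref{e052}, with the correct positive constant in front.
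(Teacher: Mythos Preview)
Your proposal is correct and follows essentially the same route as the paper: both refine the Green-identity \eqref{e012} from Lemma \ref{lem47} using the improved inputs \eqref{e080}--\eqref{e081}, extract the $\mathcal{D}^{(2)}$ term via the far-field asymptotics \eqref{e058}--\eqref{e059} and Lemma \ref{lemma53}/Remark \ref{rem51}, and conclude from the non-negativity of $\sum_j B_{j,\varepsilon}e^{\beta_{j,\varepsilon}}$ and $\sum_{i,j}(\tilde M_{i,j,\varepsilon}-8\pi)^2/(\tilde M_{i,j,\varepsilon}-4\pi)$. The only cosmetic difference is that the paper introduces an intermediate cutoff radius $\theta_{j,\varepsilon}$ and retains the exact exponent $\tilde M_{i,j,\varepsilon}/(2\pi)$ in the integrands (passing to $4$ only at the end via Remark \ref{rem51}), whereas you pass to the limiting weight $|x-q_j|^{-4}$ more directly; your explicit remark on $\Delta f_{i,j}\equiv 0$ and the trace-free Hessian is a helpful justification of why the limit in \eqref{e052} exists, which the paper leaves implicit.
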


\begin{proof}

Let $\theta_{j,\varepsilon}$ be a small parameter.  By    \eqref{e080} and \eqref{e081},
similar to \eqref{e013} and \eqref{e016}, we find

\begin{equation}\label{e083}
\begin{split}
&\frac{1}{\varepsilon^2}\int_{B_{\theta_{j,\varepsilon}}(x_{j,\varepsilon})} e^{u_{0,2}(x_{j,\varepsilon})+\beta_{j,\varepsilon}+U^*_{2,j,\varepsilon}}(H_{2,j,\varepsilon}(x,\eta_{2,j,\varepsilon})-\eta_{2,j,\varepsilon})\psi^*_{2,j,\varepsilon}\\
=&O\Big( \frac{1}{\varepsilon^2} |Df_{2,j,\varepsilon}(x_j,\varepsilon)|^2\int_{B_{\theta_{j,\varepsilon}}(x_{j,\varepsilon})} e^{u_{0,2}(x_{j,\varepsilon})+\beta_{j,\varepsilon}+U^*_{2,j,\varepsilon}}|x-x_{j,\varepsilon}|^2\\
&\quad+ \frac{1}{\varepsilon^2} \int_{B_{\theta_{j,\varepsilon}}(x_{j,\varepsilon})} e^{u_{0,2}(x_{j,\varepsilon})+\beta_{j,\varepsilon}+U^*_{2,j,\varepsilon}}|x-x_{j,\varepsilon}|^4\\
& \quad+ \frac{1}{\varepsilon^2} |Df_{2,j,\varepsilon}(x_j,\varepsilon)|\int_{B_{\theta_{j,\varepsilon}}(x_{j,\varepsilon})} e^{u_{0,2}(x_{j,\varepsilon})+\beta_{j,\varepsilon}+U^*_{2,j,\varepsilon}}|x-x_{j,\varepsilon}| |\eta_{2,j,\varepsilon}|\\
& \quad+ \frac{1}{\varepsilon^2} \int_{B_{\theta_{j,\varepsilon}}(x_{j,\varepsilon})} e^{u_{0,2}(x_{j,\varepsilon})+\beta_{j,\varepsilon}+U^*_{2,j,\varepsilon}} |\eta_{2,j,\varepsilon}|^2\Big)\\
=&O\Big( (\mu_{j,\varepsilon}^{-2}+e^{\beta_{j,\varepsilon}})\mu_{j,\varepsilon}^{-2}\ln \mu_{j,\varepsilon}\theta_{j,\varepsilon}+\theta_{j,\varepsilon}^2\mu_{j,\varepsilon}^{-2} +\mu_{j,\varepsilon}^{-4\tau} +e^{2\tau\beta_{j,\varepsilon}}             \Big)
\end{split}
\end{equation}
By this and \eqref{e058}, we find

\begin{equation}\label{e085}
\begin{split}
& \frac{1}{\varepsilon^2} \int_{ \Omega_j}e^{u_{0,2}(x_{j,\varepsilon}) +\beta_{j,\varepsilon}+U_{2,j,\varepsilon}^*  } ( H_{2,j,\varepsilon}(x,\eta_{2,j,\varepsilon})-\eta_{2,j,\varepsilon})\psi^*_{2,j,\varepsilon}\\
=&  \frac{1}{\varepsilon^2} \int_{ \Omega_j\setminus B_{\theta_{j,\varepsilon}}(x_{j,\varepsilon})}e^{u_{0,2}(x_{j,\varepsilon}) +\beta_{j,\varepsilon}+U_{2,j,\varepsilon}^*  } ( H_{2,j,\varepsilon}(x,\eta_{2,j,\varepsilon})-\eta_{2,j,\varepsilon})\psi^*_{2,j,\varepsilon}\\
&  + O\Big( (\mu_{j,\varepsilon}^{-2}+e^{\beta_{j,\varepsilon}})\mu_{j,\varepsilon}^{-2}\ln \mu_{j,\varepsilon}\theta_{j,\varepsilon}+\theta_{j,\varepsilon}^2\mu_{j,\varepsilon}^{-2} +\mu_{j,\varepsilon}^{-4\tau} +e^{2\tau\beta_{j,\varepsilon}}             \Big)\\
=&(-\frac{\tilde{M}_{2,j,\varepsilon}}{2\pi}+2)\mu_{j,\varepsilon}^{2-\frac{\tilde{M}_{2,j,\varepsilon}}{2\pi}    }  \int_{\Omega_j\setminus B_{\theta_{j,\varepsilon}}(x_{j,\varepsilon})} \frac{(e^{f_{2,j,\varepsilon}}-1) e^{I_{2,j,\varepsilon}+u_{0,2}(x_{j,\varepsilon})}}{|x-x_{j,\varepsilon}|^{\frac{\tilde{M}_{2,j,\varepsilon}}{2\pi}}}\\
&  + O\Big( (\mu_{j,\varepsilon}^{-2}+e^{\beta_{j,\varepsilon}})\mu_{j,\varepsilon}^{-2}\ln \mu_{j,\varepsilon}\theta_{j,\varepsilon}+\theta_{j,\varepsilon}^2\mu_{j,\varepsilon}^{-2} +\mu_{j,\varepsilon}^{-4\tau} +e^{2\tau\beta_{j,\varepsilon}}             \Big).
\end{split}
\end{equation}
Similarly,

\begin{equation}\label{e086}
\begin{split}
& \frac{1}{\varepsilon^2} \int_{ \Omega_j}e^{u_{0,1}(x_{j,\varepsilon}) +\beta_{j,\varepsilon}+U_{1,j,\varepsilon}^*  } ( H_{1,j,\varepsilon}(x,\eta_{1,j,\varepsilon})-\eta_{1,j,\varepsilon})\psi^*_{1,j,\varepsilon}\\
=&(-\frac{\tilde{M}_{1,j,\varepsilon}}{2\pi}+2)\mu_{j,\varepsilon}^{2-\frac{\tilde{M}_{1,j,\varepsilon}}{2\pi}    }  \int_{\Omega_j\setminus B_{\theta_{j,\varepsilon}}(x_{j,\varepsilon})} \frac{(e^{f_{1,j,\varepsilon}}-1) e^{I_{1,j,\varepsilon}+u_{0,1}(x_{j,\varepsilon})}}{|x-x_{j,\varepsilon}|^{\frac{\tilde{M}_{1,j,\varepsilon}}{2\pi}}}\\
& + O\Big( (\mu_{j,\varepsilon}^{-2}+e^{\beta_{j,\varepsilon}})\mu_{j,\varepsilon}^{-2}\ln \mu_{j,\varepsilon}\theta_{j,\varepsilon}+\theta_{j,\varepsilon}^2\mu_{j,\varepsilon}^{-2} +\mu_{j,\varepsilon}^{-4\tau} +e^{2\tau\beta_{j,\varepsilon}}             \Big) .
\end{split}
\end{equation}
On the other hand,

\begin{equation}\label{e087}
\begin{split}
&\sum_{j=1}^k\int_{\partial \Omega_j}\psi^*_{2,j}\frac{\partial \eta_{1,j,\varepsilon}}{\partial \nu}+\psi^*_{1,j}\frac{\partial \eta_{2,j,\varepsilon}}{\partial \nu}-\eta_{1,j,\varepsilon}\frac{\partial \psi^*_{2,j}}{\partial \nu}-\eta_{2,j,\varepsilon}\frac{\partial \psi^*_{1,j}}{\partial \nu}\\
  =& \sum_{j=1}^k \left( (-\frac{M_{1,j,\varepsilon}}{2\pi}+2)\int_{\partial \Omega_j}\frac{\partial \eta_{2,j,\varepsilon}}{\partial \nu}+ (-\frac{M_{2,j,\varepsilon}}{2\pi}+2)\int_{\partial \Omega_j}\frac{\partial \eta_{1,j,\varepsilon}}{\partial \nu}\right)+O(\mu_{j,\varepsilon}^{-2}(\mu_{j,\varepsilon}^{-2\tau}+e^{\tau\beta_{\varepsilon}}  ))\\
=&  \sum_{j=1}^k  \left( (\frac{\tilde{M}_{1,j,\varepsilon}}{2\pi}-2)\mu_{j,\varepsilon}^{ -\frac{\tilde{M}_{1,j,\varepsilon}}{2\pi}+2  }
\int_{\mathbb{R}^2\setminus \Omega_j} \frac{e^{u_{0,1}(x_{j,\varepsilon})+I_{1,j,\varepsilon}}}{|x-x_{j,\varepsilon}|^{\frac{\tilde{M}_{1,j,\varepsilon}}{2\pi}}}\right.\\
&\qquad \left. +(\frac{\tilde{M}_{2,j,\varepsilon}}{2\pi}-2)\mu_{j,\varepsilon}^{ -\frac{\tilde{M}_{2,j,\varepsilon}}{2\pi}+2  }
\int_{\mathbb{R}^2\setminus \Omega_j} \frac{e^{u_{0,2}(x_{j,\varepsilon})+I_{2,j,\varepsilon}}}{|x-x_{j,\varepsilon}|^{\frac{\tilde{M}_{2,j,\varepsilon}}{2\pi}}}\right)- \sum_{i=1,2}\sum_{j=1,\cdots,k}\frac{(\tilde{M}_{i,j,\varepsilon}-8\pi)^2}{\tilde{M}_{i,j,\varepsilon}-4\pi}\\
& +O(\mu_{j,\varepsilon}^{-2}(\mu_{j,\varepsilon}^{-2\tau}+e^{\tau\beta_{\varepsilon}}  )) +O(  |\tilde{M}_{i,j,\varepsilon}-m_{i,j,\varepsilon}|^{3/2}).
\end{split}
\end{equation}

Combining \eqref{e085}, \eqref{e086} and \eqref{e087}, we have
\begin{equation}\label{e088}
\begin{split}
&\sum_{i=1}^2 \sum_{j=1}^k \rho_{i,j,\varepsilon}\left(\frac{\tilde{M}_{i,j,\varepsilon}}{2\pi}-2\right)\left( \int_{\Omega_j\setminus B_{\theta_{j,\varepsilon}}(x_{j,\varepsilon})} \frac{(e^{f_{i,j,\varepsilon}}-1) }{|x-x_{j,\varepsilon}|^{\frac{\tilde{M}_{i,j,\varepsilon}}{2\pi}}}-\int_{\mathbb{R}^2\setminus \Omega_j} \frac{ 1}{|x-x_{j,\varepsilon}|^{\frac{\tilde{M}_{i,j,\varepsilon}}{2\pi}}}\right)\\
&+\sum_{j=1}^k  B_{j,\varepsilon}e^{\beta_{j,\varepsilon}}+  \sum_{i=1,2}\sum_{j=1,\cdots,k}\frac{(\tilde{M}_{i,j,\varepsilon}-8\pi)^2}{\tilde{M}_{i,j,\varepsilon}-4\pi}\\
=&  O\Big( (\mu_{j,\varepsilon}^{-2}+e^{\beta_{j,\varepsilon}})\mu_{j,\varepsilon}^{-2}\ln \mu_{j,\varepsilon}\theta_{j,\varepsilon}+\theta_{j,\varepsilon}^2\mu_{j,\varepsilon}^{-2} +\mu_{j,\varepsilon}^{-4\tau} +e^{2\tau\beta_{j,\varepsilon}}             \Big),
\end{split}
\end{equation}
where $ \rho_{i,j,\varepsilon}=\mu_{j,\varepsilon}^{2-\frac{\tilde{M}_{i,j,\varepsilon}}{2\pi}    }  e^{I_{i,j,\varepsilon}+u_{0,i}(x_{j,\varepsilon})}$.
By Remark \ref{rem51}, we can rewrite \eqref{e088} as
\begin{equation*}
\left(\mathcal{D}^{(2)}({\bf q}) +o(1)\right) e^{u_{0,1}(q_j)}e^{-\ln \varepsilon^2+ \frac{1}{|\Omega|}\int_{\Omega} u_{1,\varepsilon}}  +B e^{2\ln\varepsilon^2- \frac{1}{|\Omega|}\int_{\Omega} u_{1,\varepsilon}}  +\sum_{i=1,2}\sum_{j=1,\cdots,k}\frac{(\tilde{M}_{i,j,\varepsilon}-8\pi)^2}{\tilde{M}_{i,j,\varepsilon}-4\pi}=0,
\end{equation*}
for some $B>0$. Thus, we find $\mathcal{D}^{(2)}({\bf q}) \leq 0. $
\end{proof}

\medskip

It is easy to see that Theorem \ref{main1} follows immediately from Lemma \ref{final-lem}.

\section*{Acknowledgments}
The authors would like to express their gratitude to  the anonymous referee for valuable comments
and suggestions.

\end{document}